\newdimen\XCoord
\newdimen\YCoord
\newcommand\pDg{Q}
\newcommand\mK{K}
\newcommand\uP{N}
\newcommand\Af{A}
\newcommand\At{B}
\newcommand\Pa{P}
\newcommand\uM{N}
\newcommand\Tf{T}
\newcommand\lra{\longrightarrow}
\newcommand\HC{\operatorname{HC}}
\newcommand\ilog{\operatorname{ilog}}
\newcommand\mrurl[1]{MR. \url{http://www.ams.org/mathscinet-getitem?mr=#1}}
\newcommand\aurl[1]{\url{file:///data/ms/archive/#1}}
\newcommand\durl[1]{doi. \url{http:///dx.doi.org/#1}}
\newcommand\purl[1]{\url{file:///data/ms/print/#1}}
\newcommand\surl[1]{\url{file:///data/ms/screen/#1}}
\newcommand\zurl[1]{Zbl. \url{http://www.emis.de/zmath-item?#1}}
\newcommand\Aurl[1]{Arxiv. \url{http://arxiv.org/abs/#1}}
\newcommand\norm[1]{\|#1\|}
\newcommand\boxb[1]{\square_b}
\newcommand\ff{\operatorname{ff}}
\numberwithin{equation}{section}
\newcommand\paperbody%
\newcommand\appendixbody%
\appendix\setcounter{equation}{0}
\newtheorem{lemma}{Lemma}
\newtheorem{proposition}{Proposition}
\newtheorem{non-theorem}{Non-Theorem}
\newtheorem{conjecture}{Conjecture}
\theoremstyle{remark}
\newtheorem{definition}{Definition}
\newtheorem{remark}{Remark}
\newcommand\Vb{\mathcal{V}_{\operatorname{b}}}
\newcommand\bV{\mathcal{V}_{\operatorname{b}}}
\newcommand\cV{\mathcal{V}_{\operatorname{c}}}
\newcommand\bo{\operatorname{b}}
\newcommand\cFTs{{}^{\Phi}\overline{T}\kern-1pt{}^*}
\newcommand\eps{\varepsilon}
\newcommand\Nul{\operatorname{Nul}}
\newcommand\Tr{\operatorname{Tr}}
\newcommand\Vol{\operatorname{Vol}}
\newcommand\SO{\operatorname{SO}}
\newcommand\GL{\operatorname{GL}}
\newcommand\SL{\operatorname{SL}}
\newcommand\cA{\mathcal{A}}
\newcommand\cM{\mathcal M}
\newcommand\cJ{\mathcal{J}}
\renewcommand\cV{\mathcal{V}}
\newcommand\dcF{{}^2\kern-1.5pt\mathcal{F}}
\newcommand\DcA{{}^2\kern-3pt\mathcal{A}}
\newcommand\DcF{{}^2\kern-1.5pt\mathcal{F}}
\newcommand\DcL{{}^2\kern-1.5pt\mathcal{L}}
\newcommand\bbC{\mathbb C}
\newcommand\bbK{\mathbb K}
\newcommand\bbP{\mathbb P}
\newcommand\bbR{\mathbb R}
\newcommand\bbS{\mathbb S}
\newcommand\bbZ{\mathbb Z}
\newcommand\CIc{{\mathcal{C}}^{\infty}_\text{c}}
\newcommand\CI{{\mathcal{C}}^{\infty}}
\newcommand\Diag{\operatorname{Diag}}
\newcommand\Diff[1]{\operatorname{Diff}^{#1}}
\newcommand\Diffb[1]{\operatorname{Diff}^{#1}_{\text{b}}}
\newcommand\cFNs{{}^{\Phi}\overline N\kern-1pt{}^*}
\newcommand\Hom{\operatorname{Hom}}
\newcommand\Id{\operatorname{Id}}
\newcommand\SU{\operatorname{SU}}
\newcommand\ci{${\mathcal{C}}^\infty$}
\newcommand\dCI{\dot{\mathcal{C}}^{\infty}}
\newcommand\ha{\frac{1}{2}}
\newcommand\pa{\partial}
\renewcommand\Re{\operatorname{Re}}
\renewcommand\Im{\operatorname{Im}}
\newcommand\Mas{\text{ as }}
\newcommand\Mat{\text{ at }}
\newcommand\Mfor{\text{ for }}
\newcommand\Min{\text{ in }}
\newcommand\Mon{\text{ on }}
\newcommand\Mover{\text{ over }}
\newcommand\Mthen{\text{ then }}
\newcommand\ifitem[2]{\def\test{#2}\ifx\test\@empty\else\item #1#2\fi}
\begin{document}
\title[CoSL2]
{Compactification of $\SL(2)$}

\author{Pierre Albin}
\author{Panagiotis Dimakis}
\author{Richard Melrose}
\address{Department of Mathematics, University of Illinois at Urbana-Champaign}
\email{palbin@illinois.edu}
\address{Department of Mathematics, Massachusetts Institute of Technology}
\email{pdimakis@mit.edu}
\address{Department of Mathematics, Massachusetts Institute of Technology}
\email{rbm@math.mit.edu}
%
\begin{abstract} We discuss `hd-compactifications' of $\SL(2,\bbK)$ for
  $\bbK=\bbC$ or $\bbR.$ These are compact manifolds with boundary on which
  both the Schwartz and the Harish-Chandra Schwartz spaces are shown to be
  relatively standard spaces of conormal functions relative to the
  boundary. Closure under convolution and other module properties are shown
  to follow from the structure of appropriate generalized product spaces
  and the functorial properties of conormal functions and smooth maps
  between manifolds with corners. It is anticipated that a similar approach
  applies to general real reductive Lie groups, with the additional
  complications for $\SL(n,\bbK)$ being essentially combinatorial.
\end{abstract}
\subjclass[2000]{Primary 22E30; Secondary 32J05, 35S05}
\keywords{Compactification, Hermitian Desingularization, wonderful compactification,
Lie group, Harish-Chandra module, convolution,
  conormal functions, fibrations, blow-up, resolution, push-forward,
  induced representation, Iwasawa decomposition, unipotent}
\maketitle

\tableofcontents

\thanks{P.A. was supported by NSF grant DMS-1711325.}

\section*{Introduction}

In this note we discuss real compactifications of the groups $\SL(2,\bbR)$
and $\SL(2,\bbC)$ and associated spaces. These are special cases of the
`hd-compactification' which will be described elsewhere for $\SL(n,\bbK)$
and $\GL(n,\bbK).$ We conjecture that such a compactification exists, and
in an appropriate sense is unique, for any real reductive Lie group. We
view the hd-compactification as the real analogue of the `wonderful
compactification' of de Concini and Procesi, to which it is closely
related. Similar compactifications have been considered elsewhere, in
particular by Mazzeo and Vasy \cite{MR2175410}, especially for homogeneous
spaces.

We hope that the approach to the subject of analysis on groups presented
here may have more substantial consequences, in this note we restrict
attention to relatively well-know results approached with these less
familiar techniques, which have their origins in scattering theory and
geometric analysis of non-compact and singular spaces. Perhaps most 
relevant is the study of edge vector fields and the corresponding 
geometric constructions. In fact we encounter here not only the edge
calculus of Mazzeo, \cite{MR1133743}, which has its origins in the study of
hyperbolic space in \cite{MR89c:58133}, but also the b-calculus \cite{MR1348401}.

\begin{definition}\label{CoSL2.239} By an hd-compactification of a Lie group 
$G$ we mean a compactification, in principle to a compact manifold with corners, 
$G[1],$ with three properties:-
\begin{itemize}
\item Inversion extends to a diffeomorphism of $G[1].$
\item The right-invariant vector fields span the `iterated edge' vector
  fields associated to an iterated boundary fibration structure of $G[1].$
\item Together the left- and right-invariant vector fields span the Lie
  algebra, $\Vb(G[1])$ of tangent vector fields. 
\end{itemize}
\end{definition}

As is indicated below, these properties imply that the Schwartz space is
identified with the space of smooth functions on the compactification,
vanishing to infinite order at the boundary. More significantly
Harish-Chandra's Schwartz space, denoted here $\HC(G),$ is identified with
the space of conormal functions with respect to the boundary which are
log-rapidly decaying relative to a fixed power weight determined by the
Haar measure of the group (and encoded by Harish-Chandra in the decay
properties of a spherical function). This corresponds to the smallest power
of a boundary defining function (in general a product of powers of defining
functions) not in $L^2.$ For the convenience of the reader an appendix on
conormal functions is included.

For $\SL(2,\bbR)$ the hd-compactification is a solid 3-torus, i.e.\ is
diffeomorphic to the product of the circle, $\SO(2),$ and a closed 2-disc
which can be identified with the closure of the positive symmetric $2\times
2$ matrices of trace $1.$ This in turn is a radial compactification of the
space of positive matrices of determinant one. In general, for
$\SL(n,\bbC)$ or $\SL(n,\bbR),$ it is necessary to desingularize the
stratified space given as the closure of the positive hermitian or
symmetric matrices of trace $1,$ hence the designation `hd'. This
hd-compactification is shown to be closely related to (and at least for
$\SL(n,\bbC)$ derivable from) the wonderful compactification of de Concini
and Procesi, \cite{MR718125}. As pointed out to us by Eckhard Meinrenken,
the compactification of $\SL(2,\bbR)$ can be obtained as the closure of the
image of the radial projection into the sphere in the $2\times 2$ matrices,
i.e.\ as the closure in the sphere of the matrices with positive
determinant..

One of the fundamental properties of the Harish-Chandra Schwartz space is
that it is closed under convolution. A geometric proof of this is given
here by defining an associated compactification, $G[2],$ of $G^2$ with the
property, amongst others, that multiplication $(g,h)\longmapsto gh^{-1}$
extends to a smooth map $G[2]\longrightarrow G[1].$ Closure of $\HC(G)$
under convolution then follows from push-forward/pull-back properties of
conormal functions under b-fibrations, of which this map is an example. The
space $G[2]$ is the `double' space for the (in general iterated) edge
structure.

We give a second (larger) compactification of $\SL(2,\bbR)$ relative to a
parabolic subgroup and use it to recover the result that the Harish-Chandra
functions on the quotient by the associated unipotent group form a
convolution module over the Harish-Chandra space of the group. Use of this
space also serves to show that the spherical function, $\Phi,$ is log-smooth.

As an elementary illustration of our approach, consider the 1-dimensional
multiplicative group, which appears below as the positive diagonal subgroup
of $\SL(2).$ We radially compactify $\GL_+=\bbR_+$ to a closed interval,
for instance using the diffeomorphism to the interior
\begin{equation}
\GL_+\ni\tau\longrightarrow \frac2\pi\arctan \tau\in[0,1]=\GL_+[1].
\label{14.11.2018.5}\end{equation}
Thus $\frac1\tau$ is a defining function near the top boundary, and $\tau$
itself defines the lower boundary. Rather trivially, this is the unique
hd-compacification, with the Lie algebra, spanned by $\tau\pa_{\tau}$
generating the b-vector fields. The Harish-Chandra space is  
\begin{equation}
\HC(\GL_+)=(\log\rho)^{\infty}\cA(\GL_+[1])
\label{SL2.8}\end{equation}
is the space of conormal functions (having stable regularity under
application of b-differential operators) which decay faster at both boundaries
than any inverse power of the logarithm of a defining function.

As noted above, the twisted product $\chi:GL_+^2\ni (\sigma ,\tau
)\longmapsto \sigma /\tau $ extends smoothly to a b-fibration
\begin{equation}
\chi:\GL_+[2]=[\GL_+^2;\{(0,0)\},\{1,1)\}]\longrightarrow \GL_+[1].
\label{14.11.2018.6}\end{equation}
In this case the double space, defined by blowing up the corner, is the
usual product space for the b-calculus.  The two stretched projections
$\pi_R,$ $\pi_L:\GL_+[2]\longrightarrow \GL_+[1]$ are also b-fibrations. The
convolution product is captured by the diagram and formula
\begin{equation}
\begin{gathered}
\xymatrix{
G[1]\\
G[2]\ar[u]^{\pi_L}\ar[r]_{\pi_R}\ar[d]_{\chi}&G[1]\\
G[1]
}\\
f_1*f_2=(\pi_{L})_*\left(\chi^*f_1\pi_R^*f_2dg_R\right);
\end{gathered}
\label{CoSL2.275}\end{equation}
which carries over to the general case; see also Figure~\ref{GL2}.
\begin{figure}\label{GL2}
\begin{tikzpicture}[scale=1.4,>=stealth]
\node at (1,1) {$\GL_+[2]$};
\draw (0,.3) -- (0,2);
\draw (.3,0) --  (2,0);
\draw (0,.3) arc [radius=.3, start angle = 90, end angle = 0] ;
\draw (0,2) -- (1.7,2);
\draw (2,0) -- (2,1.7);
\draw (1.7,2) arc [radius=.3, start angle = 180, end angle = 270] ;
\node at (2.75,1) [above] {$\pi_R$} ;
\node at (3.95,1) {$\GL_+[1]$} ;
\draw [->] (2.5,1) -- (3,1);
\draw (3.5,0) -- (3.5,2) ;
\node [green] at (3.5,1.7) {$\bullet$} ;
\node [green] at (3.5,.3) {$\bullet$} ;
\node [blue] at (1.7,3.5) {$\bullet$} ;
\node [blue] at (.3,3.5) {$\bullet$} ;
\node [red] at (-1.3,-.9) {$\bullet$} ;
\node [red] at (-.9,-1.3) {$\bullet$} ;
\draw [->] (1,2.5) -- (1,3);
\node at (1,2.75) [left] {$\pi_L$} ;
\node at (1,3.75) {$\GL_+[1]$} ;
\draw (0,3.5) -- (2,3.5) ;
\draw [->] (-.2,-.2) -- (-.7,-.7);
\node at (-.45,-.45) [above left] {$\chi$} ;
\draw (-1.6,-.6) -- (-.6,-1.6) ;
\node at (-1.5,-1.5) {$\GL_+[1]$} ;
\draw [rounded corners, green]
(0,1.85) -- (1.5,1.83) -- (1.60,1.80) -- (1.66,1.765) -- (1.69,1.74) -- (1.73,1.71) -- (1.8,1.67) -- (2,1.62) ;
\draw [rounded corners, green]
(2,0.15) -- (0.5,0.17) -- (0.40,0.2) -- (0.34,0.235) -- (0.31,0.26) -- (0.27,0.29) -- (0.2,0.33) -- (0,0.38) ;
\draw [rounded corners, blue]
(0.15,2) -- (0.17,0.5) -- (0.2,0.40) -- (0.235,0.34) -- (0.26,0.31) -- (0.29,0.27) -- (0.33,0.2) -- (0.38,0) ;
\draw [rounded corners, blue]
(1.85,0) -- (1.83,1.5) -- (1.80,1.60) -- (1.765,1.66) -- (1.74,1.69) -- (1.71,1.73) -- (1.67,1.8) -- (1.62,2) ;
\draw [rounded corners, red] (0.1,0.2828) -- (0.13, .7) -- (0.17, 1) --
(.27,1.4) -- (0.4,1.6) -- (.6,1.73) -- (1,1.83) -- (1.3, 1.87) -- (1.7172,1.9);
\draw [rounded corners, red] (0.2828,0.1) -- (.7, .13) -- (1,.17) -- (1.4,.27)
-- (1.6,0.4) -- (1.73,.6) -- (1.83,1) -- (1.87,1.3) -- (1.9,1.7172);
\end{tikzpicture}
\caption{The b-product of intervals}
\end{figure}
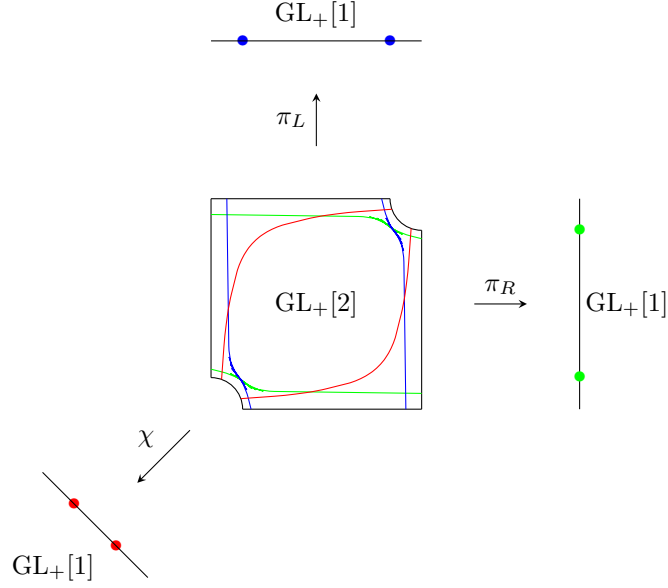

In \S\ref{S.Com} the notion of hd-compactification is discussed and the
compactification of $\SL(2)$ is described. In \S\ref{S.HC} the
Harish-Chandra and Schwartz spaces are identified and in \S\ref{S.Con} they
are shown to be closed under convolution. The compactification $G[1;N]$
corresponding to the action of unipotent subgroup is presented in
\S\ref{S.Iwa} and used in \S\ref{S.mod} to show that the space $\HC(G/N)$
is a module over $\HC(G).$ Following Crisp and Higson, \cite{MR3666051},
the relevance of these constructions for induced representations is
recalled in \S\ref{S.ind} and some properties of the intertwining operators
for a parabolic group and its opposite are given in \S\ref{S.int}.

The results concerning the Harish-Chandra Schwartz space given here are
largely in reply to questions raised by Nigel Higson; we hope to answer
more of these in due course. The authors also thank Roman Bezrukavnikov and
Eckhard Meinrenken for helpful discussions.

\paperbody
\section{Compactification}\label{S.Com}
By a compactification of a non-compact manifold without boundary $M$ we
mean a compact manifold, necessarily with boundary and generally with
corners, $M[1],$ and a diffeomorphism
\begin{equation*}
	\xymatrix{
	M \ar@{^(->}[r]^-{i} & M[1] }
\end{equation*}
onto the interior of $M[1].$ The `1' here corresponds to a compactification of
$M,$ $M[2]$ to a compactification of $M^2,$ etc.  Often there is more than
one compactification of interest for a given manifold. In that case we add
a distinguishing modifier, e.g., $M[2;\bo].$

Two compactifications are \underline{equivalent} if there is a
diffeomorphism between them intertwining the injection diffeomorphisms
\begin{equation*}
	\xymatrix{
	& {M[1,i]} \ar@{<->}[dd] \\
	M \ar@{^(->}[ru]^-{i} \ar@{^(->}[rd]^-{i'} & \\
	& {M[1,i']}. }
\end{equation*}

Compactifications often arise through the resolution of a singular space
and result in manifolds with corners with iterated boundary fibrations.  As
in \cite{1807.08299} we call these simply `iterated spaces'. Such an
iterated space is a compact manifold with corners with compatible
fibrations at each of its boundary hypersurfaces. Since these do not arise
in any significant way in the present discussion of $\SL(2,\bbR)$ and
$\SL(2,\bbC)$ (denoted collectively $\SL(2)$) we do not recall the notion
-- but it is needed for $n>2.$ For $\SL(2)$ the compactifications of the
groups are manifolds with boundary and correspondingly the iterated structure
is an \emph{edge structure}. This corresponds to a fibration of the
boundary, i.e.\ it is the total space of a smooth fibre bundle with compact
fibres
\begin{equation}
\xymatrix{
F\ar@{-}[r]&\pa M[1]\ar[d]\\
&B.
}
\label{CoCLG.99}\end{equation}
The edge vector fields associated to this structure are 
\begin{equation}
\cV_{\text{e}}(M[1])=\{V\in\bV(M[1]);V\text{ is tangent to the fibres \eqref{CoCLG.99}}\}.
\end{equation}
Here $\bV(M[1])$ is the Lie algebra of all smooth vector fields tangent to
  the boundary (or, for a manifold with corners, tangent to all boundary faces).

\begin{definition}\label{CoSL2.103} 
By an hd-compactfication of a Lie group we mean a compactification $G 
\hookrightarrow G[1]$ in the sense discussed above, with the additional
properties:
\begin{itemize}
\item [(a)] Inversion extends to a diffeomorphism of $G[1],$
\item [(b)] The right-invariant vector fields on $G$ lift to be smooth on
  $G[1],$ tangent to all boundary faces, and to span (over $\CI(G[1])$) the
  Lie algebra of iterated edge vector fields -- those vector fields tangent
  to an appropriate iterated structure on $\pa G[1],$
\item [(c)] The span of left and right invariant vector fields on $G[1]$ is the Lie algebra of
all vector fields tangent to the boundary, $\cV_b(G[1]).$
\end{itemize}
\end{definition}

Observe that (a) and (b) imply that the left invariant vector fields span
the iterated structure which is the image of the one in (b) under
inversion. Thus (c) is by way of a transversality condition for these two
fibrations. Similarly it follows from (b) that the left and right actions
of $G$ on itself extend to actions on $G[1].$

\begin{conjecture}\label{CoSL2.101} 
Any real reductive Lie group has a unique hd-compactification up to equivalence.
\end{conjecture}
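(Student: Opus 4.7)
The plan is to attack existence and uniqueness separately, using the structure theory of real reductive groups and the combinatorics of parabolic subgroups. For existence I would build $G[1]$ by equivariantly compactifying $G$ under $K\times K$ via the Cartan decomposition and then resolving the resulting singular space by a canonical iterated blow-up — the ``hd'' step flagged in the abstract. For uniqueness I would show that the three axioms of Definition~\ref{CoSL2.103} rigidify the smooth structure once the root datum of $G$ is fixed.

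First, for existence, fix a maximal compact subgroup $K\subset G,$ a minimal parabolic $P=MAN,$ and the closed positive Weyl chamber $\overline{\mathfrak{a}^+}\subset\mathfrak{a}.$ Radial compactification of $\overline{\mathfrak{a}^+}$ produces a polytope whose boundary strata are labelled by subsets of the simple roots, equivalently by the standard parabolics $Q\supseteq P.$ Transporting this to $A$ via $\exp$ and extending equivariantly by the Cartan decomposition $G=K\cdot\overline{A^+}\cdot K$ gives a compactification with the correct boundary combinatorics but singular along the strata where the two $K$-factors degenerate — exactly the singularities of the closure of positive trace-$1$ symmetric matrices in $\SL(2,\bbR)$ which the authors remove by passing to the disc. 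An iterated blow-up, beginning with the deepest stratum of the $K\times K$-orbit decomposition, should then yield a smooth manifold with corners $G[1].$ For $\SL(n,\bbK)$ this procedure should coincide with (or be derivable from) the de~Concini–Procesi wonderful compactification by passing to real points and desingularizing.

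Next I would verify the three axioms. Axiom~(a) should hold because inversion preserves the Cartan decomposition (swapping the two $K$-factors and inverting $A$) and hence the blow-up scheme. Axiom~(b) is the delicate point: at each boundary hypersurface $H_Q$ associated to a standard parabolic $Q,$ one must show that the right-invariant vector fields are tangent to $H_Q$ and span the edge vector fields of an iterated structure whose base is the partial flag variety $G/Q.$ Tangency is built into the construction, while the spanning property I expect to follow from an Iwasawa dimension count — a right-invariant vector field at a boundary point of the chamber compactification acts as the derivative of a right $G$-translation, which is precisely an edge vector field for the fibration with base $G/Q.$ Axiom~(c) would then be a transversality statement, following from the fact that left and right translation act along complementary directions at each boundary point.

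For uniqueness, given two hd-compactifications $G[1]$ and $G[1]',$ axiom~(b) forces the boundary iterated edge structure on each to be equivalent, since the only bases for which the right $G$-action can produce an edge-spanning vector field algebra are the homogeneous spaces $G/Q$ with $Q$ parabolic. Axiom~(a) then matches the left and right versions of this structure, and axiom~(c) would lift the identity on $G$ to a diffeomorphism $G[1]\to G[1]'$ because $\Vb$ is determined fibrewise by the combined left and right spans. The main obstacle, in both halves, is the hd-desingularization for general $G$: the authors describe it as essentially combinatorial even for $\SL(n,\bbK),$ and outside type~A one would need to control the interaction of Galois descent from the complexification with the blow-up scheme, which may require ad hoc adjustments for non-split real forms. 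A secondary obstacle is that rigidity of iterated edge structures in the generality needed for uniqueness does not appear to be established in the literature, so that half might require proving such a rigidity result as a separate input.
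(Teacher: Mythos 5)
The statement you have been asked about is stated in the paper as Conjecture~\ref{CoSL2.101}, and the paper offers \emph{no proof} of it. The authors establish existence of an hd-compactification only for $\SL(2,\bbR)$ and $\SL(2,\bbC)$ (the Proposition in \S\ref{S.Com}, via the explicit model $G[1]=\mK\times\At[1]$), explicitly defer $\SL(n,\bbK)$ to a future paper, and do not address uniqueness even in the $\SL(2)$ case. So there is no argument in the paper against which to measure your proposal; the only relevant question is whether your sketch actually closes the conjecture, and it does not.

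On existence, your construction defers the central difficulty -- the desingularization of the closure of the positive Weyl chamber compactification (equivalently, of the closure of the positive trace-one Hermitian/symmetric matrices) for a general real reductive group -- to an unspecified ``canonical iterated blow-up beginning with the deepest stratum,'' without specifying the centres of blow-up or showing independence of the order; the paper itself flags this resolution as the nontrivial ``hd'' step already for $\SL(n,\bbK)$. Even granting a smooth $G[1]$, axioms (b) and (c) of Definition~\ref{CoSL2.103} are not verified: for $\SL(2)$ the paper needs an explicit polar-decomposition computation, \eqref{13.9.2018.9}--\eqref{13.9.2018.11}, to see that the off-diagonal right-invariant vector fields vanish at the boundary to exactly first order with nonvanishing leading coefficient along the fibre directions. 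A ``dimension count'' does not rule out higher-order vanishing, which would destroy the spanning condition in (b). On uniqueness, the claim that axiom (b) forces the boundary fibration to have base $G/Q$ and that this rigidifies the smooth structure is an assertion rather than an argument: a priori two different resolutions of the same stratified boundary could both satisfy (a)--(c) with non-equivalent total spaces, and you yourself concede that the rigidity statement for iterated edge structures that your argument requires is not established. These deferred steps are precisely the open content of the conjecture, so what you have is a plausible programme consistent with the authors' stated philosophy, not a proof.
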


Although not discussed here the construction of $G[1]$ and $G[2]$ below can
be extended to higher products giving a `generalized product' $G[*]$ which
is a simplicial space with additional functorial properties.

To compactify $\SL(n,\bbK)$ we start from the right polar decomposition
\begin{equation}\label{9.11.1}
	G=\mK\Af, \ g=ka,\ a = (g^*g)^{1/2}.
\end{equation}
Thus $\mK$ is the maximal compact subgroup. For $n=2,$ $\mK=SO(2)\subseteq
SU(2)$ in the real and complex cases and and $\Af$ is the space of positive
definite Hermitian $2\times2$ matrices of determinant one in the complex case and the
real subspace for $\SL(2,\bbR).$

Let $\At$ be the corresponding space, of Hermitian or symmetric matrices
respectively, that are positive definite and of trace $1.$ Since $a$ in
\eqref{9.11.1} has determinant equal to 1 the map
\begin{equation*}
	\Af \ni a \mapsto b = (\Tr(a))^{-1}a \in \At,\ \Tr(a)=(\det(b))^{-\frac12},
\end{equation*}
is a diffeomorphism.

Let $\At[1]$ be the closure of $\At$ in the $2\times 2$ matrices - the space of
non-negative Hermitian or real symmetric matrices of trace $1.$ In the general case,
$n>2,$ the closure is not smooth and $\At[1]$ is defined as a resolution of the
resulting stratified space. Here however,

\begin{lemma} $\At[1]$ is a closed ball of dimension $3$ for $\SL(2,\bbC)$
  and dimension $2$ for $\SL(2,\bbR).$
\end{lemma}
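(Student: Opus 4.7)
My plan is to identify $\At[1]$ with an explicit sublevel set of a quadratic form on an affine subspace, and show this sublevel set is a closed Euclidean ball of the claimed dimension.

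First I would coordinatize. Every trace-$1$ Hermitian (respectively symmetric) matrix can be written uniquely as $\frac{1}{2}I + X$, where $X$ ranges over the real vector space $V$ of traceless Hermitian (resp.\ symmetric) $2\times 2$ matrices. This $V$ has real dimension $3$ in the complex case and $2$ in the real case: a traceless element is $X=\begin{pmatrix} t & b \\ \bar b & -t\end{pmatrix}$ with $t\in\bbR$ and $b\in\bbC$ (resp.\ $b\in\bbR$). So $\At$ is contained, as an open subset, in an affine space of the required dimension.

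Next I would compute the positivity condition. Since the trace of $\frac{1}{2}I + X$ is fixed at $1>0$, positive definiteness (resp.\ semidefiniteness) is equivalent to $\det\bigl(\tfrac{1}{2}I+X\bigr)>0$ (resp.\ $\geq 0$). A direct calculation in the coordinates $(t,b)$ gives
\begin{equation*}
\det\bigl(\tfrac{1}{2}I+X\bigr)=\tfrac{1}{4}-t^{2}-|b|^{2},
\end{equation*}
using that $X$ is traceless so its characteristic polynomial is $\lambda^{2}+\det X$. Therefore $\At$ corresponds to the open Euclidean ball $\{t^{2}+|b|^{2}<\tfrac{1}{4}\}$ in $V$, and its closure in the ambient matrix space is exactly the closed ball $\{t^{2}+|b|^{2}\leq\tfrac{1}{4}\}$.

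Finally I would observe that the Euclidean closed ball in $V$ is a closed ball of dimension $\dim_{\bbR}V$, which is $3$ in the complex case and $2$ in the real case, giving the statement. The only thing to check carefully is that no boundary points of this ball lie outside the set of positive \emph{semi}definite matrices, but this is automatic: if $t^{2}+|b|^{2}\leq\tfrac{1}{4}$ then $|t|\leq\tfrac{1}{2}$, so both diagonal entries $\tfrac{1}{2}\pm t$ are non-negative, and the determinant is non-negative by the displayed formula, so the matrix is positive semidefinite. I do not foresee a genuine obstacle here — the step that requires the most care is simply keeping track of real vs.\ complex dimensions when parametrizing the off-diagonal entry $b$.
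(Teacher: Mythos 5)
Your proof is correct, but it takes a different route from the paper's. You linearize: writing a trace-one Hermitian (resp.\ symmetric) matrix as $\tfrac12 I+X$ with $X$ traceless, so that positivity with fixed trace $1$ reduces to $\det(\tfrac12 I+X)=\tfrac14-t^2-|b|^2>0$, exhibiting $\At$ as a round open Euclidean ball in $\bbR^3$ (resp.\ $\bbR^2$) whose closure is manifestly the closed ball; your final check that the closed ball lands inside the non-negative matrices is right and closes the argument. The paper instead works directly near the boundary: it observes that limit points are rank-one trace-one projections, uses continuity of the eigendecomposition to identify a collar of the boundary with $\bbP\times[0,\eps)_s$ via $(1-s)q(\xi)+sq^{\perp}(\xi)$, and concludes the ball structure with the small eigenvalue $s$ as a boundary defining function. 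Your argument is more elementary and gives the global diffeomorphism type in one stroke (it is the Bloch-ball picture); the paper's argument is doing extra work that the lemma itself does not require but that the rest of the paper does, namely producing the identification of $\pa\At[1]$ with the projective space of projections $q(\xi)$ and the eigenvalue coordinates $(t,\theta)$ that are used immediately afterwards for the boundary fibration, the rescaled representation $q(\theta)+tq^{\perp}(\theta)$, and the computation of the invariant vector fields. So your proof fully establishes the stated lemma, but if you were to continue into the following proposition you would still need to set up the spectral coordinates that the paper's proof provides as a by-product.
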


\begin{proof}
The elements of $\At[1]\setminus B$ are non-negative Hermitian or
real-symmetric $2\times2$ matrices of rank $1$ and trace $1.$ Thus $1$ is
an eigenvalue.  In a small neighborhood of $\pa\At[1]$ in $\At[1]$
there is necessarily an eigenvalue close to one and another close to zero,
$1-s$ and $s,$ respectively.  Since the multiplicity of the eigenvalues is
constant the eigenspaces are smooth and the eigen-decomposition allows the
neighbourhood to be identified with, in the complex case,
\begin{equation*}
	\bbP \times [0,\eps)_s \mapsto 
	(1-s)q(\xi) + sq^{\perp}(\xi), \quad
	\xi \in \bbP.
\end{equation*}
Here $q$ is orthogonal projection onto the line in $\bbC^2$ determined by
$\xi\in\bbP=\bbS^2.$  In the real case the eigenvectors are necessarily
real so the identification becomes 
\begin{equation*}
	\bbS \times [0,\epsilon)_s \mapsto (1-s)q(\theta) + sq^{\perp}(\theta).
\end{equation*}
Now $q(\theta)$ is projection onto $\cos\theta e_1+\sin\theta e_2,$
$\theta\in[0,\pi).$ Thus it follows that $\At[1]$ is a compact ball, or disk, with
$s,$ defined near the boundary, as a boundary defining function.
\end{proof}

In fact it is generally more convenient to take a slightly different representation
of a neighbourhood of the boundary of $\At[1].$ Since the eigenvalue near $1$
is smooth near the boundary we may divide by it and consider instead the
space of Hermitian/symmetric $2\times 2$ non-negative matrices with $1$ as
an eigenvalue and with the other eigenvalue suitably small. This gives the
representation
\begin{equation*}
\beta =	q(\theta) + tq^{\perp}(\theta), \quad t\in [0,\eps).
\end{equation*}
Then the corresponding element of $\Af$ is
\begin{equation*}
	a = t^{-\ha}\beta =t^{-1/2}q(\theta) + t^{1/2} q^{\perp}(\theta).
\end{equation*}

\begin{proposition}
For $G = \SL(2,\bbR)$ and $\SL(2,\bbC)$
\begin{equation}
	G[1] = K \times \At[1] 
\label{CoSL2.102}\end{equation}
is an hd-compactification in which the fibration of the boundary
corresponding to the right-invariant vector fields is given by the restriction of
the product decomposition \eqref{CoSL2.102} to $\pa G[1]=K\times\pa \At[1].$
\end{proposition}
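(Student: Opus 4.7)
My plan is to verify the three conditions of Definition \ref{CoSL2.103} directly. The polar decomposition $g=ka$ together with the rescaling $a\mapsto \Tr(a)^{-1}a$ gives a diffeomorphism $G\cong\mK\times\Af\cong\mK\times\At$, and the preceding lemma identifies $\At$ with the interior of the closed ball $\At[1]$. Hence $G[1]=\mK\times\At[1]$ is a compactification of $G$ with boundary $\pa G[1]=\mK\times\pa\At[1]$ projecting onto $\pa\At[1]$ with $\mK$-fibres, and this projection is the boundary fibration that I aim to recover from the right-invariant vector fields.

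For property (a), inversion in polar coordinates is
\begin{equation*}
(k,a)\longmapsto(k^{-1},\,ka^{-1}k^{-1}),
\end{equation*}
since $g^{-1}=a^{-1}k^{-1}=k^{-1}(ka^{-1}k^{-1})$ and $ka^{-1}k^{-1}$ is still positive definite of determinant one. Using the near-boundary representative $a=t^{-1/2}q(\theta)+t^{1/2}q^{\perp}(\theta)$, the inverse $a^{-1}=t^{1/2}q(\theta)+t^{-1/2}q^{\perp}(\theta)$ is again of the same form with $q$ and $q^{\perp}$ interchanged and $t$ unchanged, and conjugation by $k$ only rotates $\theta$. The composite is therefore smooth up to $t=0$ and squares to the identity, so inversion extends to a diffeomorphism of $G[1]$.

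For property (b), I would use the Cartan decomposition $\mathfrak{g}=\mathfrak{k}\oplus\mathfrak{p}$ and treat $R_X$ summand by summand. For $X\in\mathfrak{k}$ the one-parameter subgroup lies in $\mK$, so left translation takes the polar form to itself as $(k,a)\mapsto(e^{sX}k,a)$; hence $R_X$ sits in the $\mK$ factor, is automatically smooth on $G[1]$, and its values at $\pa G[1]$ fill out the fibre direction of the boundary fibration. For $X\in\mathfrak{p}$, differentiating the polar decomposition of $e^{sX}ka$ yields $\dot k=k\xi$ and $\dot a=Ya-\xi a$ with $Y=k^{-1}Xk\in\mathfrak{p}$ and $\xi\in\mathfrak{k}$ the unique solution of the Lyapunov equation $\xi a+a\xi=[Y,a]$. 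Diagonalising $a$ as $\diag(t^{-1/2},t^{1/2})$ and transporting to the $(t,\theta)$ coordinates on $\At[1]$, a direct computation produces a vector field of the form
\begin{equation*}
c_1(t)\,t\pa_t+c_2(t)\,t\pa_\theta,
\end{equation*}
with $c_1,c_2$ smooth up to $t=0$ and whose boundary values range over $\{t\pa_t,t\pa_\theta\}$ as $X$ and $k$ vary. Combined with the $\mK$-tangent contribution from $\mathfrak{k}$ this shows $R_{\mathfrak{g}}$ is smooth on $G[1]$, tangent to $\pa G[1]$, and $\CI(G[1])$-spans the edge Lie algebra $\cV_{\text{e}}(G[1])$.

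The main technical step is this last calculation, where the content of the claim is that the factors $t^{\pm 1/2}$ in $a$ and the Lyapunov solution for $\xi$ conspire so that $\dot a$ is of order exactly $t$ rather than $t^{-1/2}$; i.e.\ the polar decomposition precisely intertwines the action of $\exp(\mathfrak{p})$ with the edge calculus near the boundary. Property (c) then follows from the parallel computation for the left-invariant vector fields: for $X\in\mathfrak{k}$ the field $L_X$ rotates the eigenvectors of $a$ at order $1$ rather than $t$, contributing a full $\pa_\theta$ component at the boundary, which together with the directions $\pa_k$ and $t\pa_t$, $t\pa_\theta$ already supplied by $R_{\mathfrak{g}}$ produces a spanning set for $\Vb(G[1])$ everywhere.
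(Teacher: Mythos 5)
Your argument is correct and follows the same skeleton as the paper's proof: the product compactification $\mK\times\At[1]$, the smooth extension of the $\mK$-conjugation action and its transitivity on $\pa\At[1]$, the Cartan split $\mathfrak{g}=\mathfrak{k}\oplus\mathfrak{p}$, an explicit near-boundary computation showing the $\mathfrak{p}$-part of the right action projects to $t\pa_t$ and $t\pa_\theta$, inversion via the polar formula $(k,a)\mapsto(k^{-1},ka^{-1}k^{-1})$, and property (c) from the conjugation action. The one genuine difference is how the key computation is organized: the paper squares the polar part, $a(s)^2=a\exp(2su)a$, and runs eigenvalue/eigenvector perturbation theory through the implicit function theorem (\eqref{13.9.2018.10}--\eqref{13.9.2018.11}), whereas you differentiate the polar decomposition at $s=0$ and reduce to the Lyapunov equation $\xi a+a\xi=[Y,a]$. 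You assert rather than perform the resulting computation, but it does check out: with $a=\diag(t^{-\ha},t^{\ha})$ and $Y\in\mathfrak{p}$ having diagonal part $y_1$ and off-diagonal entry $y_2$, the unique antisymmetric solution is $\xi=y_2\tfrac{1-t}{1+t}\left(\begin{smallmatrix}0&-1\\1&0\end{smallmatrix}\right)$, the off-diagonal entries of $\dot a=Ya-\xi a$ both equal $2y_2t^{\ha}/(1+t)$, and dividing by the eigenvalue gap $t^{-\ha}-t^{\ha}$ gives $\dot\theta=2y_2t/(1-t^2)$ while $\dot t=-2y_1t$; so $(c_1,c_2)|_{t=0}=(-2y_1,2y_2)$ indeed surjects onto $\bbR^2$ as $Y$ varies, and the edge span follows. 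The infinitesimal route is arguably cleaner (no square root to extract), at the small cost of justifying solvability of the Lyapunov equation, which is immediate since $a>0$. Two minor points to tidy in a final write-up: your $(t,\theta)$ coordinates are specific to $\SL(2,\bbR)$, so for $\SL(2,\bbC)$ you should either replace $\pa_\theta$ by the tangent space of $\bbS^2$ or, as the paper does, compute at one boundary point and invoke transitivity of the $\mK$-conjugation; and for inversion you should note explicitly that $q\mapsto q^{\perp}$ is a smooth (antipodal-type) involution of $\pa\At[1]$ so that the swap of $t^{\pm\ha}$ really is smooth on the compactified ball.
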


\begin{proof}

First note that the adjoint action of $K$ on $\Af$ extends to a smooth
action on $\At[1].$ Indeed under conjugation $g\longmapsto kgk^{-1}$ both
the determinant and the trace are invariant so this action projects to $B$
to the conjugation action there and so extends smoothly to the closure
$\At[1].$ On the boundary $\bbS^2,$ respectively $\bbS,$ the action of
$\SU(2)$ projects to the rotation action of $\SO(3)=\SU(2)/\bbZ_2,$
respectively $\SO(2)$ projects to the rotation action of $\SO(2)/\bbZ_2.$
It follows that this action is smooth (and transitive) on the boundary of $\At[1],$ so
the compactification obtained by taking the opposite polar decomposition is
equivalent to \eqref{CoSL2.102}.

We next compute the span of the right-invariant vector fields, of course
this is locally all vector fields on $G,$ so we are only interested in the
behaviour near the boundary. If $g=ka$ and $u\in\mathfrak{g}$ is an element
of the Lie algebra then
\begin{equation}
\exp(su)ka=k(s)a(s)
\label{13.9.2018.1}\end{equation}
is the integral curve of a general right-invariant vector field near $g.$ If  
\begin{equation}
\mathfrak{g}=\mathfrak{k}+\mathfrak{a}
\label{13.9.2018.2}\end{equation}
and $u\in\mathfrak{k}$ then $\exp(su)\in K$ and it follows that the
right-invariant vector fields on $K$ lift smoothly to $G[1]=K\times \At[1]$
and, by transitivity, span all vector fields on $K.$

Thus it suffices to consider \eqref{13.9.2018.1} for $u\in\mathfrak{a}.$ Then
the polar decomposition gives 
\begin{equation}
a(s)^2=g^*g=a\exp(2su)a,\ a=a(0).
\label{13.9.2018.3}\end{equation}
Mapping $\Af$ into $\At$ gives the curve
$b(s)=(\Tr(a(s))^{-1}a(s)=\det(b(s))^{\ha}a(s)$ defined by 
\begin{equation}
b(s)^2=\det(b(s))a\exp(2su)a.
\label{13.9.2018.7}\end{equation}

If $a\in\Af$ approaches the boundary of $G[1]$ along the curve of diagonal
matrices, 
\begin{equation}
a=\begin{pmatrix}t^{-\ha}&0\\0&t^{\ha}
\end{pmatrix}\Mas t\downarrow0
\label{13.9.2018.5}\end{equation}
then
\begin{equation}
u=\begin{pmatrix}1&0\\0&-1
\end{pmatrix}\Longrightarrow a(s)=
\begin{pmatrix}t^{-\ha}e^{s}&0\\0&t^{\ha}e^{-s}
\end{pmatrix},\
\beta(s)=\begin{pmatrix}1&0\\0&te^{-2s}
\end{pmatrix}.
\label{13.9.2018.6}\end{equation}
Similarly
\begin{equation}
u=\begin{pmatrix}0&1\\ 1&0
\end{pmatrix}\Longrightarrow a(s)^2=\begin{pmatrix}t^{-1}\cosh (2s)&\sinh(2s)\\
\sinh(2s)&t\cosh(2s)
\end{pmatrix}.
\label{13.9.2018.9}\end{equation}
The large eigenvalue $\lambda$ of $a(s)^2$ satisfies
\begin{multline}
(\cosh(2s)-t\lambda )(t^2\cosh(2s)-t\lambda )=t^2\sinh^2(2s)\Longrightarrow\\
\lambda =t^{-1}\cosh(2s)(1+t^2s^2F(t^2,s^2))
\label{13.9.2018.10}\end{multline}
where the implicit function theorem shows that $F$ is smooth near $0$ and
$F(0,0)\not=0.$ The corresponding eigenspace is spanned by
\begin{equation}
e_1-tsL(t^2,s^2)e_2
\label{13.9.2018.11}\end{equation}
with $L$ smooth and $L(0)F(0)=1.$

From \eqref{13.9.2018.6} it follows that the corresponding right-invariant
vector field, projected to $\At[1],$ is $-2t\pa_t.$ Similarly for
\eqref{13.9.2018.9} the vector field vanishes with $t$ but with a
coefficient which is a non-vanishing vector field on $\pa \At[1].$ Taking
into account the conjugation action discussed above, this identifies the
span of the right-invariant vector fields with the edge vector fields for
$\mK\times \pa\At[1]\longrightarrow \pa\At[1].$

The inverse of $kuau^{-1},$ where $a$ is positive and diagonal, is
$ua^{-1}u^{-1}k^{-1}.$ Inversion of diagonal matrices in $\Af$ clearly extends
smoothly to $\At[1]$ so it follows that inversion on $G$ extends smoothly to $G[1].$

It is noted above that the radial vector field on $\At$ is, near the
boundary, in the span of the right-invariant vector fields. Since the
conjugation action is in the span of the left and right vector fields and
acts transitively on $\pa \At[1]$ it follows that all tangent vector fields
on $G[1]$ are in the smooth span of the left- and right-invariant vector
fields. Thus $G[1]$ is an hd-compactification.
\end{proof}

\begin{remark}\label{CoSL2.104} An equivalent compactification of $\Af$ can be
  obtained by projecting to the trace-free Hermitian matrices 
\begin{equation}
\Af\ni a\longmapsto a-\ha\Tr(a)\Id\in\Tf.
\label{CoSL2.105}\end{equation}
However it corresponds to the quadratic compactification, rather than the
usual radial compactification, in which $(\Tr(\alpha ^2))^{-1}$ is introduced as
a defining function near infinity. This is well-defined for a linear space
(i.e.\ linear transformations lift to be smooth) but not for an affine space.
\end{remark}

\begin{remark}\label{CoSL2.106} The stabilizer of a fibre $K\times\{q\},$ $q\in\pa\At[1],$
  of the right fibration of the boundary under the right action of $G$ is the parabolic
  subgroup 
\begin{equation}
\Pa(q^\perp)=\{g\in G;gq^{\perp}=cq^{\perp}\}
\label{CoSL2.107}\end{equation}
as is discussed further below. Thus the compactification amounts to the simultaneous
addition of the homogeneous spaces $G/\Pa$ for all parabolic subgroups of $G.$

Geometrically, the resolution $\At[1]$ in the case of $\SL(n)$ has a strong
iterative property. Namely the boundary hypersurfaces are labelled by a
`depth' index which can be taken to be the corank of a limiting matrix. The
corresponding boundary hypersurface of $\At[1]$ is a bundle over the
Grassmannian corresponding to the limiting rank, with fibre the product of
two versions of $\At[1],$ one for the point in the Grassmannian and another
for its orthogonal.  \end{remark}

There is a close relationship between the hd-compactification and the wonderful
compactification of de Concini and Procesi.

\begin{proposition} The adjoint group $\SL(n, \bbC)/\bbZ_n$ has the same positive part as
$\SL(n,\bbC)$ and the closure of the image of $\Af$ in the wonderful compactification of
$\SL(n,\bbC)/\bbZ_n$ is diffeomorphic to $\At[1].$
\end{proposition}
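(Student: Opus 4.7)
The plan has two essentially independent parts: showing that the positive parts of $\SL(n,\bbC)$ and $\SL(n,\bbC)/\bbZ_n$ coincide, and then identifying the closure of $\Af$ in the wonderful compactification with $\At[1]$.

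The first part is short. If $a,a'\in\Af$ project to the same element of $\SL(n,\bbC)/\bbZ_n,$ then $a=\omega a'$ for some $\omega\in\bbZ_n,$ an $n$th root of unity. Both sides are positive Hermitian, so their eigenvalues are positive reals, which forces $\omega>0$ and hence $\omega=1.$ Thus the quotient map restricts to a diffeomorphism on $\Af,$ identifying it with its image in $\SL(n,\bbC)/\bbZ_n.$

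For the second part I would exploit $K\times K$-equivariance, where $K=\SU(n)/\bbZ_n,$ together with the polar decomposition. The wonderful compactification of $\PGL(n,\bbC)$ canonically contains the closure of a maximal complex torus as a smooth toric variety, and its non-negative real part is the resolution of the simplex of non-negative diagonal matrices of trace $1$ obtained by iteratively blowing up the coordinate faces in order of increasing codimension. Since every element of $\Af$ has the form $kdk^{-1}$ with $d$ positive diagonal and $k\in K,$ the image of $\Af$ in the wonderful compactification is the $K$-conjugation orbit of the positive torus, and its closure is the $K$-orbit of this toric resolution. The same polar decomposition exhibits $\At[1]$ as the $K$-conjugation orbit of an analogous resolution of the same simplex, so the proof reduces to matching the two resolutions and verifying that the resulting $K$-equivariant smooth structures agree along the boundary.

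The main obstacle is this last matching, which requires comparing, stratum by stratum, the transverse structure of the wonderful compactification with the recursive bundle structure of $\At[1]$ described in Remark~\ref{CoSL2.106}. Along the stratum corresponding to matrices of rank $k,$ the wonderful compactification fibres over a product of flag varieties with fibre controlled by a Levi subgroup of type $A_{k-1}\times A_{n-k-1},$ while $\At[1]$ fibres over the single Grassmannian $\Gr(k,n)$ with fibre a product of two smaller copies of $\At[1].$ The key geometric fact is that a positive semidefinite Hermitian matrix of rank $k$ is uniquely determined by its image (a $k$-plane in $\bbC^n$) together with positive Hermitian structures on the image and its orthogonal complement. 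On the positive real form, complex conjugation collapses the two flag varieties to the single Grassmannian of $k$-planes, and the Levi structure restricts to precisely the product of smaller hd-compactifications, so the identification proceeds by induction on $n$ starting from the base case $n=2$ already established in the preceding proposition.
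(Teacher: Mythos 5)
Your first paragraph is correct and complete: if $a=\omega a'$ with $a,a'$ positive definite Hermitian and $\omega$ a central $n$th root of unity, comparing eigenvalues forces $\omega=1$, so $\Af$ injects into the adjoint group. Be aware, though, that the paper offers no proof of this proposition beyond the remark that it is ``quite elementary for $n=2$,'' and for $n=2$ there is a one-line argument that your machinery obscures: the wonderful compactification of $\PGL(2,\bbC)$ is simply $\bbP(M_2(\bbC))\cong\bbP^3$, and the closure of the image of $\Af$ there is the projectivized cone of nonzero non-negative Hermitian matrices, which the trace-$1$ normalization identifies with $\At[1]$, the closed $3$-ball.

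For general $n$ your outline (torus closure, $K\times K$-equivariance, stratum-by-stratum matching) is the right shape of argument, but as written it contains a genuine gap and an error. The error: your ``key geometric fact'' is false as stated. A positive semidefinite Hermitian matrix of rank $k$ is determined by its image $V$ and a positive Hermitian form on $V$ alone; it carries no Hermitian structure on $V^\perp$. That extra datum is precisely what distinguishes the resolution $\At[1]$ from the singular closure of $\At$, so it is exactly what must be \emph{extracted} from the wonderful compactification (as the normalized limit of the vanishing eigenvalue block), not a property of the limiting matrix that can be quoted. The gap: the matching you label ``the main obstacle'' is not a loose end but the entire content of the claim for $n>2$. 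One must check that along each stratum the non-negative part of the $G/P_I\times G/P_I^-$ base collapses to the flag of mutually orthogonal subspaces, that the non-negative part of the Levi fibre is the product of smaller copies of $\At[1]$, and --- crucially --- that the normal-crossings coordinates of the wonderful compactification restrict to smooth boundary defining functions on the real locus, since a priori the closure of a semialgebraic set in a smooth variety need not be a manifold with corners. Compounding this, the paper defines $\At[1]$ for $n>2$ only as ``a resolution'' of the stratified closure, so a proof must first fix that definition (e.g.\ via the recursive bundle description of Remark~\ref{CoSL2.106}) before the two sides can be compared. As it stands your text is a plausible plan, not a proof.
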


This is quite elementary for $n=2.$ For $\SL(n,\bbC)/\bbZ_n$ the
hd-compactification is given by the real blow-up of the divisors in the
wonderful compactification -- similar constructions occur in
\cite{MetResMod} for the Deligne-Mumford compactification of the Riemann
moduli space.

\section{Schwartz spaces}\label{S.HC}
It follows from Proposition~\ref{CoSL2.102} that for $\SL(2)$ Haar measure
is an edge density on $G[1]$ -- if $t$ is a boundary defining function then
\begin{equation}
dg=t^{-1-2\kappa }\nu =t^{-2\kappa }\nu _{\bo}\text{ near }\pa G[1]
\label{CoSL2.109}\end{equation}
where $\nu$ is a smooth, strictly positive, measure and $\nu _{\bo}$ is a
b-measure, so near the boundary is of the form 
\begin{equation}
\nu _{\bo}=\frac{dt}t\nu_{\pa}
\label{CoSL2.110}\end{equation}
with $\nu_{\pa}$  a positive smooth measure on the boundary.  The weight
$2\kappa$ is the codimension of the fibres over the boundary, i.e.\
\begin{equation}
\kappa =\begin{cases}
\ha&\Mfor\SL(2,\bbR)\\
1&\Mfor\SL(2,\bbC).
\end{cases}
\label{CoSL2.111}\end{equation}
Thus if $L^2_g(G)$ and $L^2_{\bo}(G[1])$ are the $L^2$ spaces, computed
relative to Haar measure and a b-measure respectively, then 
\begin{equation}
L^2_g(G)=\rho ^\kappa L^2_{\bo}(G[1])
\label{CoSL2.112}\end{equation}
where $\rho$ is any boundary defining function.
 
One indication of the relevance of the hd-compactification is that the
extended Schwartz and Harish-Chandra spaces are readily characterized in
terms of $G[1].$ The definition and some of the properties of the spaces of
conormal functions bounded with respect to a weight are recalled in the
appendix.

The space of bounded conormal functions is defined by
\begin{equation*}
	u \in \cA(X)
	\iff
	\sup|Du|<\infty\text{ for all } D \in \Diffb*(X)
\end{equation*}
where $\Diffb*(X)$ is the enveloping algebra of $\bV(X).$ As follows
directly, $\cA(X)$ is a Fr\'echet algebra containing the space, $\CI(X),$
of functions smooth up to the boundary. If $w<0$ is a weight in
the sense of \eqref{CoSL2.147} then
\begin{equation*}
	u \in w\cA(X) \iff
	u/w \in \cA(X) \iff
	\sup|(Du)/w| <\infty \text{ for all } D \in \Diffb*(X).
\end{equation*}

The most obvious weights on a compact manifold with corners are the
products of real powers of defining functions for the various boundary
hypersurface. Here logarithmic weights are also important so, always
choosing a boundary defining function with $\rho <1,$ we define
\begin{equation*}
	\ilog\rho = \frac1{\log{\frac1\rho }} \in \cA(X).
\end{equation*}
Indeed, if $V\in\bV(X)$ is a vector field tangent to the boundary then
$(V\rho )/\rho\in\CI(X)$ and 
\begin{equation}
V\ilog\rho =(\ilog\rho )^2\frac{V\rho }\rho 
\label{CoSL2.108}\end{equation}
so it follows that $\ilog\rho$ is a weight, vanishing at the boundary. We
use the formal notation of $w^\infty w'$ for a weight $w,$ required to vanish at the
boundary and a second weight $w',$ to denote the intersections of the weighted spaces
\begin{equation}
w^\infty w'\cA(X)=\bigcap_{m\in\bbR} w^mw'\cA(X); 
\label{CoSL2.172}\end{equation}
these are again Fr\'echet spaces, now with $\CIc(X\setminus\pa X)$ a dense subspace.

\begin{proposition} For an hd-compactification of a semisimple Lie group
  the Schwartz space is $\dCI(G[1]),$ the space of smooth functions vanishing
  to infinite order at all boundary faces, and the Harish-Chandra
  (Schwartz) space is
\begin{equation}
	\HC(G) = (\ilog\rho)^\infty \rho ^\kappa\cA(G[1]),
\label{9.11.3}\end{equation}
the space of conormal functions with `log-rapid vanishing' at the
boundary relative to the weighted space $\rho ^\kappa\cA(G[1]).$
\end{proposition}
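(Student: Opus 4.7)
The plan is to exploit property (c) of the hd-compactification to identify, as a $\CI(G[1])$-module, the algebra of differential operators generated by the combined left- and right-invariant vector fields with the b-differential operator algebra $\Diffb{*}(G[1])$. Indeed, since by property (c) the left- and right-invariant vector fields together span $\Vb(G[1])$ over $\CI(G[1])$, their compositions generate $\Diffb{*}(G[1])$. Thus for any weight $w$, a smooth function $f$ on the interior satisfies $|Df|\le C_D w$ for every $D\in\Diffb{*}(G[1])$ if and only if the same bound holds for every composition of left- and right-invariant derivatives applied to $f$. This identifies the conormal condition relative to $w$ with boundedness by $w$ under all group-theoretic derivatives and is the bridge between the standard analytic definitions of $\cS(G)$ and $\HC(G)$ and the manifold-theoretic characterization of \eqref{9.11.3}.

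For the Schwartz space, $\cS(G)$ consists of $f\in\CI(G)$ such that $(1+\|g\|)^{N}$ times any combined left- and right-invariant derivative of $f$ is bounded, for every $N\in\bbN$. Near $\pa G[1]$ the operator norm $\|g\|$ blows up like an inverse power of a defining function $\rho$, so $(1+\|g\|)^{-N}$ is comparable to $\rho^{cN}$ for some $c>0$ and rapid polynomial decay in $\|g\|$ is equivalent to vanishing to infinite order at the boundary. Combined with the first paragraph this gives $\cS(G)=\rho^{\infty}\cA(G[1])=\dCI(G[1])$.

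For the Harish-Chandra space, $f\in\HC(G)$ iff $\Xi(g)^{-1}(1+\sigma(g))^{N}$ times any combined derivative of $f$ is bounded for every $N$, where $\sigma=\log\|g\|$ and $\Xi$ is the Harish-Chandra spherical function. Near $\pa G[1]$, $\sigma\sim\log(1/\rho)$, so $(1+\sigma)^{-N}$ is a fixed power of $\ilog\rho$ and demanding decay for all $N$ produces a factor $(\ilog\rho)^{\infty}$. The crucial input is the asymptotic behavior of $\Xi$ on $G[1]$: for $\SL(2,\bbR)$ and $\SL(2,\bbC)$ one reads off from the explicit formulas for the zonal spherical function that $\Xi=\rho^{\kappa}\log(1/\rho)\,u$ with $u\in\cA(G[1])$ strictly positive. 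Applying the identification of the first paragraph, the Harish-Chandra estimates then translate to $f\in(\ilog\rho)^{\infty}\rho^{\kappa}\log(1/\rho)\cA(G[1])$, which equals $(\ilog\rho)^{\infty}\rho^{\kappa}\cA(G[1])$ since the extra logarithm is absorbed into the $(\ilog\rho)^{\infty}$ factor.

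The main obstacle is the explicit analysis of $\Xi$ as a conormal function on $G[1]$: one must verify not only the leading logarithmic asymptotic $\Xi\sim\log(1/\rho)\,\rho^{\kappa}$ but also the full conormal regularity of the remainder. For $\SL(2)$ both follow from the explicit integral formula, but this is the step where the hd-compactification interacts most directly with harmonic analysis on $G$, and its analogue for general real reductive groups would rest on Harish-Chandra's uniform asymptotic expansion of $\Xi$ along each stratum of the iterated boundary. A parallel but technically simpler check is that $\sigma$ itself is of the form $\log(1/\rho)$ modulo a conormal function, so that the two logarithmic scales genuinely coincide.
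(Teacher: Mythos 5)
Your argument is correct and follows essentially the same route as the paper: use property (c) to identify products of left- and right-invariant derivatives with $\Diffb*(G[1])$, identify the polynomial weight in $\sigma$ with powers of $\ilog\rho$, and input the boundary asymptotics of $\Xi$ (which the paper establishes as Lemma~\ref{CoSL2.234} via push-forward). The only difference is that you ask for full conormal regularity of $\Xi$, whereas the equivalence of the sup-norm estimates only needs the two-sided bounds $c\rho^{\kappa}\le\Xi\le C\rho^{\kappa}\log(1/\rho)$, the extra logarithm being absorbed into $(\ilog\rho)^{\infty}$ exactly as you note.
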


\begin{proof}
Here we consider only $G=\SL(2,\bbK)$ but in fact the proof persists for $\SL(n,\bbK).$

The definition given by Knapp, \cite{MR1880691}, and Wallach, \cite{MR929683},
amounts to the condition
\begin{equation}\label{9.11.2}
	u \in \HC(G)
	\iff
	\frac{\norm{g}^p}{\Xi}D_1D_2u \in L^\infty(G), \quad
	\text{ for all } p, D_1, D_2
\end{equation}
where $D_1$ and $D_2$ are in the left and right enveloping algebras. The
weight $\norm{g}\sim 1/\ilog\rho $ and the spherical
function $\Xi$ is `almost in $L^2$' -- in this case
\begin{equation*}
	ct^{\kappa}\le \Xi \le Ct^{\kappa}\log 1/t \text{ near } \pa
        G[1],\ c,\ C>0.
\end{equation*}

For $\SL(2,\bbR),$ this follows from the fact that $\kappa$ is a double
root of the indicial polynomial of the radial part of the Laplacian on
$\Af$ shifted corresponding to the bottom of the continuous spectrum but
can also be extracted from results in Varadarajan's book
\cite{MR1725738}. A direct proof by push-forward is given below in
Lemma~\ref{CoSL2.234}. Then \eqref{9.11.2} is equivalent to
\eqref{9.11.3}. Note that we have used the consequence of the
hd-compactification conditions that
\begin{equation*}
	D_1D_2 \in \Diffb*(G[1])
\end{equation*}
and these products of left and right invariant operators span the b-differential operators.
\end{proof}

If one thinks in terms of standard harmonic analysis then \eqref{9.11.3} is
equivalent to a statement on the Mellin transform near the boundary. Namely
the Mellin transform -- the Fourier transform in terms of $\log t$ -- is,
in an appropriate normalization, holomorphic in the dual half-space $\Im
s>\kappa$ and uniformly a Schwartz function of $\Re s$ up to, and on, the
limiting line with values in $\CI(\pa G[1]).$ Thus, in terms of the
variable $x=\ilog t,$ these are smooth functions in the usual sense,
vanishing rapidly as $x\downarrow0$ but with a factor of $\exp(-\kappa/x).$
  
\section{Convolution}\label{S.Con}
The left action of $G$ on $G$ is given by integration of the image of the
Lie algebra and since these vector fields extend smoothly to $G[1],$ where
they are complete, the left action extends smoothly. Similarly for the
right action:-
\begin{equation}
G\times G[1]\longrightarrow G[1],\ G[1]\times G\longrightarrow G[1].
\label{CoSL2.113}\end{equation}
However the product itself does not extend to a smooth map from $G[1]^2.$

To resolve this issue we consider an appropriate compactification of $G^2$
obtained by blow-up from $G[1]^2.$ For $\SL(2)$ we take $G[2]=G[2,R]$ to be the
edge compactification of $G[1]^2$ with respect to the right
fibration. More explicitly,
\begin{equation}
	G[2]=G[2,R] =
        K^2\times\At[2,0],\ \At[2,0]=[\At[2,\bo];\beta_{\bo}^{-1}(\pa\Diag)].
\label{CoSL2.117}\end{equation}
Here $\At[2,\bo]$ is the `b-resolution' of $\At[1]^2,$ obtained by blowing
up the corner:- 
\begin{equation}
\At[2,\bo]=[\At[1]^2;(\pa\At[1])^2],\ \beta _{\bo}:\At[2,\bo]\longrightarrow \At[1]^2
\label{CoSL2.116}\end{equation}
being the blow-down map. The subsequent blow up in \eqref{CoSL2.117} is of
the preimage of the diagonal in the boundary. In terms of the product with
$K^2$ on the left, the second blow-up corresponds to the preimage of the
fibre diagonal of the boundary. Neither blow-up affects the interior which
remains $K^2\times\At^2.$ The inclusion of $G^2$ is through the `right'
product inclusion in $(K \times \At[1]) \times (K \times \At[1]).$

Note that the first blow-up is not really necessary to resolve the edge
structure of the manifold. However it seems that this larger resolution (the
blow-ups can be performed in either order) is the more appropriate one here.

\begin{proposition}
The twisted product map
\begin{equation}
	\chi:G\times G \ni (g,h) \mapsto gh^{-1} \in G 
\label{CoSL2.173}\end{equation}
and the two projections lift to b-fibrations 
\begin{equation}
\xymatrix{
G[1]\\
G[2]\ar[r]^{\pi_R}\ar[u]^{\pi_L}\ar[d]_\chi& G[1]\\
G[1].
}
\label{CoSL2.125}\end{equation}
\end{proposition}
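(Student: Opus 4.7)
My plan is to treat the three maps in turn, using throughout the product decomposition $G[i] = K^i \times \At[i]$ and the boundary coordinates on $\At[1]$ furnished by the proof of Proposition~\ref{CoSL2.102}. The projections $\pi_L, \pi_R$ descend from b-fibrations $G[1]^2 \to G[1]$, so one needs only to check that they lift smoothly through the two blow-ups producing $G[2]$; the twisted product requires a direct computation in resolved coordinates to establish smoothness, after which the b-fibration conditions become bookkeeping over the four boundary hypersurfaces of $G[2]$.

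For the projections: the center $(\pa\At[1])^2$ of the first blow-up is a p-submanifold of $\At[1]^2$ which fibers over $\pa\At[1]$ under either factor projection, so by the functoriality of real blow-up with respect to fibrations the projection lifts smoothly through $\beta_\bo$ to $\At[2,\bo]$. The second blow-up center $\beta_\bo^{-1}(\pa\Diag)$ lies in the front face of the first blow-up and also fibers over $\pa\At[1]$ under either induced projection, so the projection lifts further to $\At[2,0]$. This gives smoothness of $\pi_R$; since the two blow-up centers are symmetric under the exchange of factors, that exchange extends smoothly to $G[2]$ and $\pi_L = \pi_R \circ \text{swap}$ then also extends smoothly.

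For $\chi$, write $g = ka$ and $h = k'b$ with $a, b \in \Af$. Then $\chi(g,h) = k(ab^{-1})k'^{-1}$, and since left and right multiplication by fixed $K$-elements is smooth on $G[1]$, the task reduces to showing that the map $(a,b) \mapsto ab^{-1}$, or equivalently its trace-normalized version $(\alpha,\beta) \mapsto \alpha\beta^{-1}$ landing in $\At$ after polar decomposition, extends smoothly from the interior to $\At[2,0]$. In the boundary coordinates $t_i, \theta_i$ from the proof of Proposition~\ref{CoSL2.102}, the potential singularity at $(\pa\At[1])^2$ is governed by the ratio $t_1/t_2$, which the corner blow-up introduces as a smooth polar coordinate on the front face. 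A residual singularity then remains precisely along the boundary diagonal $\{\theta_1 = \theta_2\}$, and the edge blow-up of $\beta_\bo^{-1}(\pa\Diag)$ resolves it; a direct computation in the resulting coordinates confirms smoothness of the product and of its subsequent polar decomposition into $K \times \At[1]$.

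Once smoothness is in hand, the b-fibration property reduces to enumerating the four boundary hypersurfaces of $G[2]$ --- the lifts of the two original factor boundaries together with the front faces of the two blow-ups --- and computing, for each of the three maps, the order of vanishing of the pulled-back defining function of $\pa G[1]$ on each hypersurface, verifying b-submersion as a local check in the same coordinates. The main obstacle is the smoothness of $\chi$ through the edge blow-up: both the eigenvalues and the eigenvectors of the matrix whose square root appears in the polar decomposition of $ab^{-1}$ degenerate simultaneously at the boundary diagonal, and one must verify that the combined effect of the corner and diagonal blow-ups is exactly the resolution required to render the polar decomposition smooth into $\At[1]$.
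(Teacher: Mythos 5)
Your outline reproduces the paper's strategy almost exactly: reduce by $K$-equivariance to the positive factors $\At[1]$, check smoothness of the polar decomposition of the product in coordinates adapted to the two blow-ups, and then obtain the b-fibration conditions by computing how the boundary defining function of $G[1]$ pulls back to the four boundary hypersurfaces of $G[2]$; your treatment of the two projections as a standard fact about the stretched product is also exactly what the paper does. The structural claims are all correct, including the identification of the residual singular locus after the corner blow-up with the boundary (fibre) diagonal.

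The genuine gap is that the decisive step is asserted rather than carried out: ``a direct computation in the resulting coordinates confirms smoothness'' is precisely the content of the paper's proof, and without it there is no proof. Concretely, the paper takes one factor diagonal (using the diagonal adjoint action of $K$, which preserves both centres of blow-up --- a reduction you should make, since it is what renders the computation tractable), expands the polar part $a(t_1,t_2)^2$ of the product as a sum of four rank-one terms, observes that after the corner blow-up the leading term is $R^2(\xi)\,q(e_2)$ with $R$ a defining function for the boundary diagonal and that the subleading term vanishes there too, and then checks that the second blow-up, which amounts to introducing $x$ with $x^2=R^2+s^2$, makes the trace-normalized square root a smooth family in $\At[1]$; the smoothness of the $K$-factor and the exponents needed for the b-map condition are read off from the same expansion. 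One smaller correction: the singularity at the corner $(\pa\At[1])^2$ is not governed by the ratio $t_1/t_2$ --- the paper's computation shows the large eigenvalue is a smooth positive multiple of $(t_1t_2)^{-1}$, so the product is already smooth into $\At[1]$ near the corner but away from the boundary diagonal \emph{without} the first blow-up; that blow-up is a structural convenience (and is needed for the weight bookkeeping later), not what resolves $\chi$. The only essential resolution is the second, edge, blow-up along $\beta_{\bo}^{-1}(\pa\Diag)$, and that is exactly where your proposal defers to an unperformed computation.
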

\noindent Although there is a corresponding, but different,
left compactification, $G[2,L],$ of $G^2$ we will denote this right
compactification by $G[2].$

\begin{proof} We give a computational proof, although this follows more
  abstractly from the properties of the hd-compactification. As noted
  above, the product map extends to $G\times G[1]$ and $G[1]\times G$ and,
  since the blow-ups in \eqref{CoSL2.116} and \eqref{CoSL2.117} are in the
  factors of $\At[1]$ and the conjugation action of $K$ on $B[1]$ is
  smooth, it suffices to consider the behaviour of the product of two
  elements of $\At[1]$ near the boundary. The diagonal adjoint action of
  $K$ on the factors of $\At[1]$ preserves both centres of blow-up, so also
  extends smoothly to $G[2].$ Thus it suffices to consider the product
  where one factor is diagonal
\begin{equation}
\left(t_1^{-\ha}q(\xi)+t_1^{\ha}q^\perp(\xi)\right)\begin{pmatrix}t_2^{\ha}&0\\0&t_2^{-\ha}
\end{pmatrix};
\label{CoSL2.118}\end{equation}
here the second factor has been inverted as in \eqref{CoSL2.173} and, the
center of blow-up being in the corner, we may suppose that both $t_1$ and
$t_2$ are close to zero.

The polar part of the product in \eqref{CoSL2.118} is readily computed
\begin{multline}
a(t_1,t_2)^2=\begin{pmatrix}t_2^{\ha}&0\\0&t_2^{-\ha}
\end{pmatrix}\left(t_1^{-1}q(\xi)+t_1q^\perp(\xi)\right)
\begin{pmatrix}t_2^{\ha}&0\\0&t_2^{-\ha}
\end{pmatrix}\\
=(t_1t_2)^{-1}\bigg(q(e_2)\left(q(\xi)+t_1^2q^\perp(\xi)\right)q(e_2)
+t_2q(e_2)\left(q(\xi)+t_1^2q^\perp(\xi)\right)q(e_1)\\
+t_2q(e_1)\left(q(\xi)+t_1^2q^\perp(\xi)\right)q(e_2)
+t_2^2q(e_1)\left(q(\xi)+t_1^2q^\perp(\xi)\right)q(e_1)\bigg),
\label{CoSL2.119}\end{multline}
written as a sum of the four terms corresponding to the basis $e_1,$ $e_2,$
so each has rank at most one.

If $t_1\downarrow0$ and $t_2\downarrow0$ but $\xi$ is bounded away from
$e_1$ then the first, most singular, term is non-zero and there is
necessarily an eigenvalue which is a positive smooth multiple of
$(t_1t_2)^{-1};$ the other eigenvalue is its inverse. Thus the trace of the
square-root must be of the form $\alpha^{-\ha} (t_1t_2)^{-\ha}$ with
$\alpha >0$ and smooth. It follows that the corresponding rescaled matrix
of trace one satisfies
\begin{equation}
b(t_1,t_2)=\alpha q(e_2)q(\xi)q(e_2)+t_1E_1+t_2E_2.
\label{CoSL2.120}\end{equation}
It is therefore a smooth curve in $\At$ approaching the boundary. A similar
computation shows that the factor in $K$ in the polar decomposition of
\eqref{CoSL2.118} is also smooth down to $t_1=t_2=0.$

This actually proves smoothness of the product without the first blow-up in
\eqref{CoSL2.117}, so it certainly remains smooth after this blow-up but
away from the preimage of $\xi=e_2.$

The blow-up of $t_1=t_2=0$ sets $t_i=\tau_is$ where the $\tau_i$ and $s$
are defining functions for the resulting three boundary faces, moreover
$\tau_1+\tau_2>0$ since the two `old' boundary hypersurfaces no longer
intersect. Then \eqref{CoSL2.119} becomes
\begin{multline}
a(t_1,t_2)^2
=\\
s^{-2}(\tau_1\tau_2)^{-1}\bigg(q(e_2)q(\xi)q(e_2)+s\tau_2\left(q(e_2)q(\xi)q(e_1)+q(e_1)q(\xi)q(e_2)\right)+s^2F\bigg)
\label{CoSL2.122}\end{multline}
with $F$ smooth.

The first term is a multiple $R^2(\xi)q(e_2)$ of the projection onto $e_2$
with coefficient which is the square of a defining function for $\xi=e_1$
and the coefficient of $s$ vanishes at $\xi=e_1.$
The second blow-up, in \eqref{CoSL2.117}, is the introduction of polar
coordinates in the sense that a defining function for the new front face is
$x^2=R^2+s^2.$ Then $s=\sigma x$ where $\sigma$ is a defining function for
the lift of $s=0$ and $R^2=x^2r^2$ where $r^2$ is smooth, non-negative, and
vanishes precisely at the lift of the diagonal $\xi=e_2.$ Thus
\eqref{CoSL2.122} becomes 
\begin{equation}
a(t_1,t_2)^2
=x^{-2}(\tau_1\tau_2)^{-1}\big(r^2q(e_2)+\sigma e+xf)\big)
\label{CoSL2.123}\end{equation}
where all terms are smooth and $e$ is linearly independent of $q(e_2)$ and
does not vanish at $r=0.$

It follows that
\begin{equation}
a(t_1,t_2)=x^{-1}(\tau_1\tau_2)^{-\ha}b(x,\sigma ,\tau_1,\tau_2)
\label{CoSL2.124}\end{equation}
projects to a smooth family in $\At[1].$

Again a similar analysis shows the smoothness of the compact factor, so the
product does extend to a smooth map. A boundary defining function for
the left factor $G[1]$ lifts to the product of boundary defining functions for the three
boundary faces excepting the remaining boundary hypersurface which projects
onto the boubdary of the right factor of $G[1].$ The b-submersion
condition follows from analysis of invariant vector fields; hence the map
is b-fibration.

That the two projections lift to be b-fibrations is standard for the edge
stretched product for any boundary fibration.
\end{proof}

Convolution on $ \CIc(G)$ is given by the standard formula
\begin{equation*}
	(f_1, f_2) \mapsto 
	f_1 *f_2(g)
	= \int_G f_1(gh^{-1})f_2(h)\,dh.
\end{equation*}
This can be interpreted geometrically as
\begin{equation*}
	f_1 * f_2
	= (\pi_L)_*(\chi^*f_1\cdot \pi_R^*f_2\cdot dh)
\end{equation*}
where $\chi(g,h) = gh^{-1}.$ Since it is most natural to push forward
densities we multiply by Haar measure on $G$ and write the convolution
formula as 
\begin{equation}
	f_1 * f_2dg
	= (\pi_L)_*(\chi^*f_1\cdot \pi_R^*f_2\cdot dgdh)
\label{CoSL2.126}\end{equation}

A basic result due to Harish-Chandra which follows from this geometric
setup is:-

\begin{proposition}\label{CoSL2.174} 
Convolution extends by density from $\CIc(G) \subseteq \HC(G)$ to
\begin{equation}
	\HC(G) \times \HC(G) \lra \HC(G) 
\label{CoSL2.136}\end{equation}
\end{proposition}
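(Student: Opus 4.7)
The plan is to interpret convolution as pullback-pushforward along the b-fibrations of the previous proposition,
\begin{equation*}
(f_1 * f_2)(g)\, dg = (\pi_L)_*\bigl(\chi^* f_1 \cdot \pi_R^* f_2 \cdot dg\,dh\bigr),
\end{equation*}
and to apply Melrose's pullback/pushforward calculus for conormal distributions on manifolds with corners. The argument proceeds in three steps: pull back $f_1, f_2$; form the integrand as a conormal b-density on $G[2]$; push forward by $\pi_L$ to $G[1]$.

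For the pullback step I use the multiplicities
\begin{equation*}
\chi^*\rho \sim x^2 \tau_1 \tau_2, \qquad \pi_L^*\rho \sim \tau_1 \sigma x, \qquad \pi_R^*\rho \sim \tau_2\sigma x,
\end{equation*}
derived in the previous proposition at the four boundary hypersurfaces $H_L, H_R, H_b, H_d$ of $\At[2,0]$ with respective defining functions $\tau_1, \tau_2, \sigma, x$. These determine the exponents of $\chi^* f_1$ and $\pi_R^* f_2$ at each face as corresponding multiples of $\kappa$, and the log-rapid decay $(\ilog \rho)^\infty$ transfers under pullback because $\ilog$ of a product of defining functions is pointwise comparable to $\ilog$ of any factor. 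Lifting Haar density $dg\, dh = t_1^{-2\kappa} t_2^{-2\kappa}\,\nu_b$ through the two successive blow-ups (standard b-density Jacobian bookkeeping, with the diagonal blow-up contributing a positive shift of $\dim\pa\At[1]=2\kappa$ at $H_d$) then assembles the integrand as a conormal b-density on $G[2]$ with explicit exponent at each boundary hypersurface.

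Melrose's pushforward theorem for the b-fibration $\pi_L$ requires a strictly positive exponent at each hypersurface invisible to $\pi_L$, and computes the image exponent at $\pa G[1]$ as the appropriate minimum of $E_H / e_H(\pi_L)$ over the hypersurfaces mapped into $\pa G[1]$. The only invisible hypersurface is $H_R$, and the combined exponent there is exactly zero --- the critical marginal case of logarithmic divergence. This is rescued by the log-rapid decay defining $\HC(G)$: since both $\chi^* \ilog \rho$ and $\pi_R^* \ilog \rho$ are comparable to $\ilog \tau_2$ near $H_R$, the integrand carries a factor $(\ilog \tau_2)^N$ for arbitrarily large $N$, and
\begin{equation*}
\int_0^{1/2}(\log(1/\tau_2))^{-N}\,\frac{d\tau_2}{\tau_2} \;=\; \int_{\log 2}^\infty u^{-N}\,du
\end{equation*}
is finite for $N > 1$. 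Granted integrability, the image exponent at $\pa G[1]$ works out to $-\kappa$, giving $(f_1 * f_2)\, dg \in \rho^{-\kappa}\cA(G[1])\cdot \nu_b$ and hence $f_1 * f_2 \in \rho^\kappa \cA(G[1])$; the $(\ilog \rho)^\infty$ decay is preserved since it is equivalent to lying in $\rho^N \cA$ for every $N$.

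The principal obstacle is the marginal integrability at $H_R$: without the logarithmic rapid decay built into $\HC(G)$, the convolution integral diverges logarithmically, and the strictly larger space $\rho^\kappa \cA(G[1])$ is itself not closed under convolution. The subsidiary technical task is the careful tracking of exponents through the two blow-ups defining $\At[2,0]$ to verify that the pushforward weight at $\pa G[1]$ lands at $-\kappa$ exactly --- in particular ensuring that the contributions from the blow-up Jacobians at $H_b$ and $H_d$ combine with the Haar weight and the pullbacks to produce the stated minimum.
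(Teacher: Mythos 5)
Your overall strategy is exactly the one the paper follows: write $f_1*f_2\,dg=(\pi_L)_*\bigl(\chi^*f_1\cdot\pi_R^*f_2\cdot dg\,dh\bigr)$, lift the Haar density and pull back the two factors to $G[2]$ as weighted conormal b-densities, observe that the exponent at the unique hypersurface not mapped to $\pa G[1]$ by $\pi_L$, namely $H_R=\{\tau_2=0\}$, is exactly $0$, so that integrability there is marginal and is supplied precisely by the $(\ilog\rho)^\infty$ decay, and then apply the push-forward theorem. That identification of the marginal face and of the role of the logarithmic decay is the heart of the argument and you have it right.

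However, one of your three multiplicity computations is transposed, and as stated it breaks the final exponent count. The twisted product $\chi$ carries the interior of the diagonal front face $H_d=\{x=0\}$ into the \emph{interior} of $G[1]$ (two elements escaping to infinity in the same direction at comparable rates have bounded quotient), so $H_d$ is a fixed hypersurface for $\chi$ and the multiplicity of $\chi^*\rho$ there is $0$; it is at the corner face $H_b=\{\sigma=0\}$, where the two factors escape in \emph{different} directions, that $gh^{-1}$ blows up, at the rate $t_1t_2/\sin^2\theta$, giving $\chi^*\rho\sim\sigma^2\tau_1\tau_2$ rather than your $x^2\tau_1\tau_2$ (this is \eqref{CoSL2.134}, where the paper's ``$s$'' is the post-blow-up defining function of $H_b$). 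With your assignment the integrand of \eqref{CoSL2.135} acquires the power weight $\tau_1^{-\kappa}\tau_2^{0}\sigma^{-3\kappa}x^{+\kappa}$, and the push-forward exponent at $\pa G[1]$ becomes $\min(-\kappa,-3\kappa,+\kappa)=-3\kappa$ instead of the $-\kappa$ you assert, so the image would fail to land in $\HC(G)\,dg$; with the corrected multiplicity all three non-fixed faces carry $-\kappa$ and the count closes. Separately, your closing justification that the $(\ilog\rho)^\infty$ decay survives because ``it is equivalent to lying in $\rho^N\cA$ for every $N$'' is false as written---that intersection is the Schwartz space $\dCI(G[1])$, which is far smaller than $\HC(G)$. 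What is true, and what you need, is the definition \eqref{CoSL2.172} applied with the weight $\ilog\rho$, combined with the finite logarithmic loss \eqref{CoSL2.179} of the push-forward theorem (Lemma~\ref{CoSL2.175}), which is harmless only because the exponent of $\ilog\rho$ in the domain is arbitrary.
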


The proof, below, depends on an examination of the functions and measures
in \eqref{CoSL2.126}. The notion of a b-fibration and some of the
properties of such maps are briefly recalled in the appendix. Note that for
a b-fibration all hypersurfaces in the domain are of one of two types,
either `fixed' -- those which are mapped onto the image space -- or
`non-fixed' if mapped into (and then necessarily onto) a boundary
hypersurface.

\begin{lemma}\label{CoSL2.175} If $f:X\longrightarrow Y$ is a b-fibration
  between compact manifolds with corners then pull-back defines a
  continuous map 
\begin{equation}
f^*:(\ilog\rho )^{\infty}\cA(Y)\longrightarrow (\ilog \rho')^{\infty}\iota_{H''}\cA(X)
\label{CoSL2.176}\end{equation}
where $\rho$ is a total boundary defining function on $Y,$ $\rho '$ is a
collective boundary defining function for the non-fixed hypersurfaces in
$X$ and $\iota_{H''}$ is the formal weight denoting smoothness up to the
fixed hypersurfaces. Similarly 
\begin{equation}
f_*:(\ilog\rho_{H'})^{\infty}w_{H''}\cA(X;\Omega _{\bo}))\longrightarrow
(\ilog \rho')^{\infty}\cA(Y;\Omega _{\bo}))
\label{CoSL2.177}\end{equation}
provided $w_{H''}$ is an integrable weight at the fixed boundary
hypersurfaces.
\end{lemma}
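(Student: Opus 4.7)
The strategy is to reduce both statements to the standard pull-back and push-forward theorems for b-fibrations (as recalled in the appendix) and then to track the logarithmic weights by hand. Throughout, by the b-map property of $f$ we have relations
\begin{equation*}
f^*\rho_G = a_G \prod_{H'} \rho_{H'}^{e(G,H')},\ a_G\in \CI(X),\ a_G>0,\ e(G,H')\in\NN_0,
\end{equation*}
for each boundary hypersurface $G$ of $Y,$ where the sum is taken only over non-fixed hypersurfaces $H'$ of $X$ since, by definition, $f(H'')\cap Y^\circ\neq\emptyset$ for fixed $H''.$ The b-submersion property means that for each non-fixed $H'$ there is a unique $G=G(H')$ with $e(G(H'),H')\geq 1,$ and the no-corner condition of a b-fibration ensures the exponent matrix has at most one non-zero entry in each row.

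For the pull-back \eqref{CoSL2.176}, I would first handle the unweighted statement $f^*\cA(Y)\subseteq\iota_{H''}\cA(X).$ Conormality follows from the b-submersion property: locally any $V\in\bV(X)$ decomposes as $V=\sum a_i\,f^*W_i+V_0$ with $W_i\in\bV(Y)$ and $V_0$ vertical (so $V_0(f^*u)=0$), giving $V(f^*u)=\sum a_i\,f^*(W_iu),$ which is bounded when $u\in\cA(Y).$ Iterating over $\Diffb*(X)$ gives $f^*u\in\cA(X).$ For smoothness at a fixed hypersurface $H''$ one observes that near a generic point of $H''$ the map $f$ factors through $Y^\circ,$ where $u$ is smooth, so $f^*u$ extends smoothly across $H''.$ For the logarithmic weight, the product formula above yields
\begin{equation*}
\log(1/f^*\rho)=\sum_{H'}E(H')\log(1/\rho_{H'})+O(1),\quad E(H')=\sum_G e(G,H')\geq 1,
\end{equation*}
and hence $f^*\ilog\rho\leq C\,\ilog\rho'$ uniformly, so $f^*(\ilog\rho)^{\infty}\cA(Y)\subseteq(\ilog\rho')^{\infty}\iota_{H''}\cA(X).$

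For the push-forward \eqref{CoSL2.177}, I would invoke the push-forward theorem for b-fibrations on b-densities as the workhorse: for a b-fibration between compact manifolds with corners, $f_*$ maps conormal b-densities on $X$ which are rapidly decreasing at the non-fixed hypersurfaces and integrable at the fixed ones to conormal b-densities on $Y$ with decay at $\partial Y$ controlled by the exponent matrix. The integrability of $w_{H''}$ at every fixed $H''$ is exactly the hypothesis needed to make fiber integration finite near $H''$ (the b-measure has a $d\rho_{H''}/\rho_{H''}$ factor transverse to $H''$). Conormality of $f_*u$ then follows by pulling b-vector fields $W\in\bV(Y)$ through the push-forward: locally lifting $W$ to $V\in\bV(X)$ with $f_*V=W$ gives $W(f_*u)=f_*(Vu),$ allowing iteration on $\Diffb*(Y).$

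The main obstacle—and the only piece beyond the standard theorem—is the transfer of log-rapid vanishing from non-fixed $H'\subset X$ to $\partial Y.$ Here I would argue pointwise: over any point of $Y$ with $\rho=\prod_G\rho_G$ small, the fiber of $f$ in $X$ has at least one non-fixed $H'$ with $\rho_{H'}$ small and $\rho_{H'}^{E(H')}\leq C\rho,$ so on that fiber $\ilog\rho_{H'}\leq C'\ilog\rho.$ Fiber integration against an integrand which is $O((\ilog\rho_{H'})^N)$ for every $N$ therefore produces an output which is $O((\ilog\rho)^N)$ for every $N,$ up to a bounded loss in the polylogarithmic factor that is absorbed in the $\infty$ order of vanishing. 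Combined with the standard conormality conclusion, this gives \eqref{CoSL2.177}; continuity in both cases is automatic from the uniform estimates used at each step.
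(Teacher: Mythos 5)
Your argument is correct and follows essentially the same route as the paper: both statements are reduced to the standard pull-back/push-forward theorems for b-maps and b-fibrations recalled in the appendix, with the logarithmic weights handled by comparing $f^*\ilog\rho$ to products of the $\ilog\rho_H$ over preimage hypersurfaces and absorbing the resulting finite logarithmic loss (the paper's $p(H')$, your fibre-volume factor) into the infinite order of vanishing. The only cosmetic difference is that you derive the needed one-sided estimates directly from the exponent matrix and a pointwise fibre bound, where the paper instead quotes the two-sided comparison \eqref{CoSL2.178} and the fixed-loss push-forward statement \eqref{CoSL2.179}.
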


\begin{proof} Under pull-back with respect to a b-map conormal
  functions with weight $w$ lift to be conormal with weight $f^*w$ at the
  non-fixed hypersurfaces and with smoothness up to the fixed
  hypersurfaces. This gives \eqref{CoSL2.176} since the pull-back of the
  logarithmic weight $\ilog\rho$ at a hypersurface $H'$ in the base
  satisfies
\begin{equation}
c\prod_{f(H)=H'}(\ilog\rho _H)\le f^*\ilog\rho _{H'}\le
C\prod_{f(H)=H'}(\ilog\rho _H)^{1/p(H')},\ c,\ C>0,
\label{CoSL2.178}\end{equation}
where $p(H')$ is the multiplicity of $f$ at $H',$ the maximum number of hypersurfaces in the
preimage of $H'$ with non-empty mutual intersection.

Under push-forward, there is in general a fixed `logarithmic growth' of order $p(H')$ in
essentially the same sense. That is 
\begin{multline}
f_*:\prod_{H'\in\cM_1(Y)}f^*(\ilog\rho_{H'})^kw_{H''}\cA(X;\Omega _{\bo}))\longrightarrow\\
\prod_{H'\in\cM_1(Y)}(\ilog\rho_{H'})^{k-p(H')}\cA(Y;\Omega _{\bo})).
\label{CoSL2.179}\end{multline}
Using \eqref{CoSL2.178} again in both domain and range, \eqref{CoSL2.177} follows. 
\end{proof}

Note that if $X$ is any compact manifold with corners the space of
log-rapid decaying conormal functions can be defined in two ways, since
\begin{equation}
(\prod_{H\in\cM_1(X)}\ilog\rho _H)w^\infty\cA(X)=
(\ilog\rho)^\infty\cA(X),\ \rho=\prod_{H\in\cM_1(X)}\rho _H.
\label{CoSL2.190}\end{equation}

\begin{proof}[Proof of Proposition~\ref{CoSL2.174}]

The first step is to analyze the boundary behaviour of the product $dgdh$
of the Haar measures on $G[2].$ On $G[1]^2$ it is an edge density
\begin{equation}
dgdh=t_1^{-2\kappa }t_2^{-2\kappa}\nu_{\bo}
\label{CoSL2.127}\end{equation}
where $\nu_{\bo}$ is a positive b-density. Under blow up of a boundary
face, in this case the corner, a positive b-density lifts to a positive
b-density so 
\begin{equation}
dgdh=\tau_1^{-2\kappa }\tau_2^{-2\kappa}s^{-4\kappa}\nu_{\bo}\Mon [K^2\times\At[2,\bo]).
\label{CoSL2.128}\end{equation}
with $\tau_1,$ $\tau_2$ and $s$ defining functions for the three boundary
hypersurfaces. The second blow up is of a p-submanifold of $s=0$ of
codimension $2\kappa,$ i.e.\ the dimension of $\pa\At[1].$ It follows that 
\begin{equation}
dgdh=\tau_1^{-2\kappa }\tau_2^{-2\kappa}s^{-4\kappa}x^{-2\kappa}\nu_{\bo}\Mon G[2].
\label{CoSL2.129}\end{equation}
This is essentially the formula for Lebesgue measure in polar coordinates.

Next consider the pull-back $\pi_R^*f_2$ of an element of $\HC(G)$ to
$G[2]$ under the b-fibration $\pi_R.$ Lemma~\ref{CoSL2.175} shows that for
the pull-back of bounded conormal functions on a compact manifold with
boundary, log-rapid decrease is reflected in log-rapid decay at all
non-fixed hypersurfaces and smoothness up to the `fixed'
hypersurfaces. Since
\begin{equation}
\pi_R^*t_2=a\tau_2 sx
\label{CoSL2.130}\end{equation}
where $a>0$ the linear weight $t_2$ lifts to the product of the three
weights so
\begin{equation}
\pi_R^*f_2\in (\ilog \tau_2)^\infty(\ilog s)^\infty(\ilog
x)^\infty(\tau_2sx)^\kappa\iota_{\tau_1=0}\cA(G[2]),\ f_2\in\HC(G[1]).
\label{CoSL2.133}\end{equation}

Now essentially the same analysis shows that  
\begin{equation}
\chi^*f_1\in(\ilog \tau_1)^\infty(\ilog \tau_2)^\infty(\ilog
s)^\infty(\tau_1s^{2}\tau_2)^\kappa\cA(G[2]),\ f_1\in\HC(G[1]).
\label{CoSL2.134}\end{equation}
It follows that the product  
\begin{equation}
\begin{gathered}
(\chi^*f_1)(\pi_R^*f_2)\in(\ilog \tau_1\ilog
\tau_2\ilog s\log x)^\infty(\tau_1s^{3}\tau_2^{2}x)^\kappa 
\cA(G[2])\Longrightarrow \\
(\chi^*f_1)(\pi_R^*f_2)dgdh\in(\ilog \tau_1\ilog
\tau_2\ilog s\log x)^\infty(\tau_1sx)^{-\kappa} 
\cA(G[2];\Omega_{\bo}).
\end{gathered}
\label{CoSL2.135}\end{equation}

The absence of any power weight in $\tau_2,$ with the log-rapid decay,
means that the density \eqref{CoSL2.135} is fibre-integrable for $\pi_L,$
so again using Lemma~\ref{CoSL2.175} the push-forward of the product is
well-defined and in the space $\HC(G[1])dg,$ which is Harish-Chandras's
result, \eqref{CoSL2.136}.
\end{proof}

\section{Iwasawa decomposition}\label{S.Iwa}

As noted above, the base of the fibration given by the product
decomposition 
\begin{equation}
\pa G[1]=\mK\times \pa \At[1]
\label{CoSL2.268}\end{equation}
can be identified with the space of parabolic subgroups of $\SL(2).$
Each parabolic is conjugate to the upper triangular subgroup
$\Pa_+\subset\SL(2)$ so $\pa\At[1]$ is identified as the 1-dimensional
real, respectively complex, projective space. The boundary of $\At[1],$ realized as the
rank one positive matrices, is given by the corresponding projections. In
this identification, $\Pa_+$ is identified with the line $[e_1],$ or the
corresponding projection, $q(e_1).$

Consider the Iwasawa decomposition of $\SL(2)$
\begin{equation*}
	G = \mK\pDg\uM_+.
\end{equation*}
The quotient $G/\uM_+$ can therefore be identified with $\mK\pDg$ where
$\pDg$ is the subgroup of positive definite diagonal matrices. This leads to
the induced compactification given by the closure $\pDg[1]$ of $\pDg$ in
$\At[1]$
\begin{equation}
	(G/\uM_+)[1] = \mK\times \pDg[1] \subseteq G[1]. 
\label{CoSL2.137}\end{equation}
Thus, for $\SL(2),$ $\pDg[1]\subset\At[1]$ is the same closed interval for both complex
and real cases.

The relationship between the base of the boundary fibration
\eqref{CoSL2.268} and the parabolic subgroups can be seen more geometrically.

\begin{lemma}\label{CoSL2.269} The closure in $G[1]$ of each orbit for the
  right action of $\uP$ on $G$ contains two points of $\pa
  G[1]=\mK\times\pa\At[1]$ of the form $\pm kq(N)$ for a fixed point
  $q_1(\uP)\in\pa\At[1]$ and some $k\in\mK.$
\end{lemma}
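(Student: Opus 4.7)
The plan is to reduce to a one-parameter calculation via the Iwasawa decomposition and then compute the limits of the polar decomposition explicitly along an unbounded sequence in $\uP$.

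First, the Iwasawa decomposition $G=\mK\pDg\uP$ writes every $g\in G$ as $g=k_0a_0n_0$, so the right $\uP$-orbit of $g$ is the coset $k_0a_0\uP$; in particular the orbit space is parametrized by $\mK\times\pDg$. Since left multiplication by $\mK$ extends smoothly to $G[1]$ by Definition~\ref{CoSL2.103}(a), it suffices to study the closure of $\{an:n\in\uP\}$ for a general $a\in\pDg$. Restricting to a real one-parameter subgroup $t\mapsto n(t)$ of $\uP$ (all of $\uP$ for $\SL(2,\bbR)$; for $\SL(2,\bbC)$ we choose any such subgroup) reduces matters to identifying the limits of $an(t)$ in $G[1]=\mK\times\At[1]$ as $t\to\pm\infty$.

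Second, I identify the $\At[1]$-component. Writing the polar decomposition $an(t)=k(t)p(t)$ with $p(t)=\sqrt{n(t)^*a^2n(t)}$ and $b(t)=p(t)/\Tr p(t)\in\At$, the Hermitian matrix $n(t)^*a^2n(t)$ has determinant one and trace of order $t^2$. Hence one eigenvalue grows like $t^2$ and the other decays like $t^{-2}$, and the dominant eigenline converges to the line $[e_2]$ that is fixed by the right action of $\uP$ on $\bbP^1\simeq\pa\At[1]$. Thus $b(t)\to q_1(\uP)$, the promised fixed point in $\pa\At[1]$.

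Third, I identify the $\mK$-component via $k(t)=an(t)p(t)^{-1}$, tracked through the orthonormal eigenframe $\{v_\pm(t)\}$ of $p(t)^2$: the unit vectors $k(t)v_\pm(t)=(an(t))v_\pm(t)/\sqrt{\lambda_\pm(t)}$ each have explicit finite limits. The crucial geometric fact is that, while the eigenline $[v_+(t)]$ converges to the same limit from either side of $t=0$, the unit eigenvector $v_+(t)$ itself sweeps out a half-turn as $t$ runs from $-\infty$ to $+\infty$. This half-turn is transmitted through the polar formula to give two limit values $k_\pm\in\mK$ related by $k_-=-k_+$, i.e.\ by the central element $-I$. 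After restoring the left translation $k_0$, the orbit closure contains the two points $(\pm k_0k_+,q_1(\uP))\in\mK\times\pa\At[1]$, which are the points $\pm k\,q_1(\uP)$ of the lemma with $k=k_0k_+\in\mK$.

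The main obstacle I expect is the asymptotic bookkeeping in the third step, specifically showing that the two $\mK$-limits are exactly antipodal rather than differing by some other element of the centralizer $Z_\mK(\pDg)$. This is the geometric substance of the lemma: the relevant Weyl representative squares to the non-trivial central element, and it is the half-turn of the eigenframe that produces the $\pm$ in the statement.
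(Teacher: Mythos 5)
Your overall strategy coincides with the paper's: reduce by left $\mK$-equivariance and the Iwasawa decomposition to the orbit of a diagonal $a\in\pDg$ under a real one-parameter unipotent curve $n(x)$, and then read off the two boundary limits from the polar decomposition of $an(x)$ as $x\to\pm\infty$. Your first two steps are correct and match the paper exactly: the paper computes $a(x;\tau)^2$ explicitly, normalizes by the trace, and finds that the $\At[1]$-component tends to $q(e_2)=q_1(\uP_+)$ at both ends, which is your eigenvalue/eigenline argument in concrete form.

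The one substantive inaccuracy is in your stated mechanism for the antipodality of the two $\mK$-limits, which you yourself flag as the geometric heart of the matter. The continuously normalized dominant eigenvector $v_+(x)$ of $n(x)^*a^2n(x)$ does \emph{not} sweep out a half-turn: writing $v_+\propto(x\tau,\lambda_+-\tau)$ with $\lambda_+>\tau$, its second component never changes sign, so $v_+(x)$ stays in a fixed half-plane and returns to $e_2$ as $x\to-\infty$ and as $x\to+\infty$. What actually performs the half-turn is the unit \emph{image} vector $k(x)v_+(x)=an(x)v_+(x)/\lambda_+(x)^{\ha}\propto(\tau^{\ha}x\lambda_+,\tau^{-\ha}(\lambda_+-\tau))$, whose first component is cubic in $x$ while the second is quadratic and positive; it runs from $-e_1$ through $e_2$ to $+e_1$. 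Since $v_+(x)\to e_2$ from both sides while $k(x)v_+(x)\to\pm e_1$, the two limits of $k(x)$ in $\SO(2)$ differ by the rotation by $\pi$, i.e.\ by $-\Id$ -- this is the correct version of your "half-turn" and it does rule out any other element of the centralizer. The paper sidesteps the eigenframe entirely by writing $k=\begin{pmatrix}c&s\\-s&c\end{pmatrix}$ with $c=(\tau+1)/((\tau+1)^2+(x\tau)^2)^{\ha}$ and $s=-x\tau/((\tau+1)^2+(x\tau)^2)^{\ha}$, from which $c\to0$ and $s\to\mp1$ immediately, so the antipodality is visible by inspection; if you carry out your step three this way the argument closes without any delicate bookkeeping. (Also note that an eigenframe parametrization such as $(x\tau,\lambda_+-\tau)$ degenerates at $x=0$ when $\tau>1$, another reason to prefer the direct formula for the orthogonal factor.)
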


\begin{proof} Since the parabolic subgroups are all conjugate, it suffices
  to consider the right action of $\uP_+.$ The action of $K$
  on the left commutes with composition with $n(x)\in\uP_+$ on the right, so it
  suffices to consider the orbit starting at a point of $\Af.$ In fact from
  the Iwasawa decomposition it is enough to consider initial points in
  $\pDg.$ Consider the polar decomposition of the corresponding curve
\begin{equation*}
g(x;\tau)=\begin{pmatrix}
\tau^{\ha}&0\\0&\tau^{-\ha}
\end{pmatrix}
\begin{pmatrix}
1&-x\\0&1
\end{pmatrix}=
\begin{pmatrix}c&s\\-s&c\end{pmatrix}\begin{pmatrix}a&b\\b&d\end{pmatrix}=
k(x;\tau)a(x;\tau).
\end{equation*}
Since $a$ is positive definite and
$$
a(x;\tau)^2=\begin{pmatrix}\tau&-x\tau\\
-x\tau&x^2\tau+\tau^{-1}\end{pmatrix},
$$
it follows that
\begin{equation}
a(x;\tau)=\frac{1}{(\tau+x^2\tau+\tau^{-1}+2)^{\ha}}\begin{pmatrix}
\tau+1&-x\tau\\-x\tau&x^2\tau+\tau^{-1}+1\end{pmatrix}.
\label{CoSL2.180}\end{equation}
Dividing by the trace it follows that the image curve is smooth in $\At[1]$
in terms of the radially compactified variable $x/(1+x^2)^{\ha}$ and 
\begin{equation}
a(x;\tau)\to\begin{pmatrix}0&0\\0&1
\end{pmatrix}
\label{CoSL2.270}\end{equation}
at both end-points. Similarly
\begin{equation}
c=\frac{\tau+1}{((\tau+1)^2+
(x\tau)^2)^{\ha}},\
s=\frac{-x\tau}{((\tau+1)^2+(x\tau)^2)^{\ha}}, 
\label{CoSL2.221}\end{equation}
which are smooth functions of $1/|x|$ as $x\to\infty$ and
\begin{equation}
k(\tau;x)\to\pm\begin{pmatrix}0&1\\-1&0\end{pmatrix}\Mas x\to\mp\infty.
\end{equation}
\end{proof}

Notice that the limit point of the right $\uP$ orbits projected to $\At$
\begin{equation}
q_1(\uP_+)=e_2
\label{CoSL2.271}\end{equation}
is the opposite end-point of $\pDg[1]$ to that coming from the
limiting projection above, which will be denoted $q_0(\uP_+)=e_1.$

We define a second compactification of $G$ associated to a choice of
parabolic by two levels of blow-up, associated to $q_1(\uP),$ from $G[1]$ 

\begin{definition} The compactification of $G$ relative to $\uP,$ acting on the right, is
\begin{equation}
	G[1;\uP] = K\times \At[1;\uP],\ \At[1;N]=[[\At[1];\{q(\uP)\}]; \pa\ff]. 
\label{CoSL2.201}\end{equation}
Here $\pa\ff$ is the codimension two corner which is the boundary of the
front face introduced in the first blow-up, so consisting of two points
for $\SL(2,\bbR)$ and a circle for $\SL(2,\bbC).$
\end{definition}

Thus, $G[1;\uP]$ has three bounding hypersurfaces, an `old' one, numbered
`$0$' corresponding to the original boundary, and the two front faces
numbered `$1$' and `$2$' from the two blow-ups. So the `2' face separates
the `0' and `1' faces.

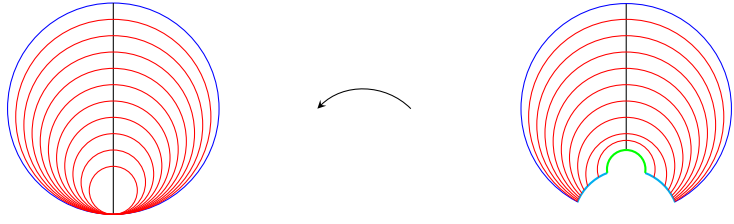
\begin{figure} 
\begin{tikzcd}
\begin{tikzpicture}
\def\R{40};
\def\n{10}
\def\h{7/10*\R};
\draw [blue] (\R pt,\R pt) circle [radius=\R pt];
\draw (40pt, 0pt) -- (40pt, 80pt);
\foreach \x in {1,2,...,\n}
	\pgfmathsetmacro\ycoord{40-(\R+\h)/2*(1-\x/(\n +1))}
	\pgfmathsetmacro\radius{\ycoord}
	\draw [red] (40 pt,\ycoord pt) circle [radius=\radius pt];
\end{tikzpicture}
&
\begin{tikzpicture}[>=stealth]
\node at (0,0) {};
\draw [<-] (0,40pt) arc [start angle=135, end angle=45, radius=25pt];
\node at (40pt,0) {};
\end{tikzpicture}
&
\begin{tikzpicture}[remember picture,overlay]

\tikzstyle{reverseclip}=[insert path={(current page.north east) --
  (current page.south east) --
  (current page.south west) --
  (current page.north west) --
  (current page.north east)}
]

\def\R{40};
\def\n{10} 
\def\h{7/10*\R};
\path [name path=orcircle] (\R pt,\R pt) circle [radius=\R pt];
\pgfmathsetmacro\smallr{2 *\R / (\n+1)};
\pgfmathsetmacro\smally{\R-\h*(1-2/(\n +1))};
\coordinate (smallc) at (\R pt, \smally pt);
\path [name path=1blow] (smallc) circle [radius=\smallr pt];
\def\bigr{20}
\pgfmathsetmacro\bigy{\R -(\h+\bigr-5)};
\coordinate (bigc) at (\R pt, \bigy pt);
\path [name path=2blow]  (bigc) circle [radius=\bigr pt];
\path [name path=diag] (40 pt, 0 pt) -- (40 pt, 80 pt);

\path[name intersections = {of = diag and 1blow, by = {botd, botd'}}];
\draw (botd) -- (40pt, 80pt);

\clip (40pt,40pt) circle [radius=40pt];
\begin{scope}
\begin{pgfinterruptboundingbox} 
\path [clip] (smallc) circle [radius=\smallr pt] [reverseclip];
\end{pgfinterruptboundingbox}
\draw [cyan, thick] (bigc) circle [radius=\bigr pt];
\end{scope}

\begin{scope}
\begin{pgfinterruptboundingbox} 
\path [clip] (bigc) circle [radius=\bigr pt] [reverseclip];
\end{pgfinterruptboundingbox}

\draw [blue, thick] (40pt,40pt) circle [radius=40pt];
\foreach \x in {3,4,...,\n}
	\pgfmathsetmacro\ycoord{40-(\R+\h)/2*(1-\x/(\n +1))}
	\pgfmathsetmacro\radius{\ycoord}
	\draw [red] (40 pt,\ycoord pt) circle [radius=\radius pt];

\draw [green, thick] (smallc) circle [radius=\smallr pt];
\draw [red, scale=1.5] (smallc) circle [radius=\smallr pt];

\end{scope}

\end{tikzpicture}
& & & 
\end{tikzcd}
\caption{Resolving the unipotent flow}\label{fig1}
\end{figure}

\begin{proposition}\label{G1N} The quotient map $G\longrightarrow G/N$
  extends to a fibration
\begin{equation}
	\xymatrix{
	&G \ar[r]^{\pi_\uP} \ar@{^(->}[d] & G/\uP \ar@{^(->}[d] \\
	\uP[1]\ar@{-}[r] & G[1;\uP] \ar[r]^{\pi_N}\ar[d]^{\beta} & G/\uP[1]\\
&G[1]
}
\label{CoSL2.294}\end{equation}
where $\uP[1]$ is the radial compactification of $\uP$ as a Euclidean
space. Defining functions $\rho _i,$ $i=0,1$ for the boundaries of $G/N[1]$
and $\rho$ for the boundary of $G[1]$ pull back in terms of defining functions for the boundaries of $G[1;\uP]$ as
\begin{equation}
\pi_\uP^*\rho_0=\tilde\rho_0,\ \pi_\uP^*\rho_1=\tilde\rho_1\tilde\rho_2^2,\ \beta
^*\rho=\tilde\rho_0\tilde\rho_1\tilde\rho_2^2 
\label{CoSL2.272}\end{equation}
and the generating vector field for the right action of $\uP$
lifts to $G[1;N]$ to be of the form 
\begin{equation}
V(\uP)=\tilde\rho _1\tilde\rho_2^2W,\ 0\not=W\text{ smooth },\
W\tilde\rho_2\not=0\Mat\tilde\rho _2=0
\label{CoSL2.273}\end{equation}
where $W$ is tangent to the boundary surfaces $\{\tilde\rho_0=0\},$
$\{\tilde\rho_1=0\}.$ The inclusion \eqref{CoSL2.137} makes $G/\uP[1]$ into a
p-submanifold transversal to the fibration such that $\tilde\rho_0$ and $\tilde\rho
_1$ restrict to boundary defining functions.
\end{proposition}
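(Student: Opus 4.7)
The plan is to exploit the Iwasawa decomposition $G=KDN$ together with the $K$-equivariance of the right $\uP$-action (under left $K$-multiplication and the adjoint $K$-action on $\At$) to reduce the analysis to the explicit polar computation already performed in Lemma~\ref{CoSL2.269}. Writing the right $\uP$-orbit through the diagonal element as $g(x;\tau)$ and setting $u=x\tau/(1+\tau)$, I would express the image $b(x;\tau)\in\At$ in local coordinates $(s,\gamma)$ near $q_1(\uP)=q(e_2)$, with $s$ a defining function for $\pa\At[1]$, obtaining
\begin{equation*}
s(x;\tau)=\frac{\tau}{(1+\tau)^2(1+u^2)},\qquad \gamma(x;\tau)=-\frac{u\tau}{(1+\tau)(1+u^2)}.
\end{equation*}
These expressions reveal that as $\tau\to 0$ the orbit concentrates at $q(e_2)$ from a family of directions indexed by $u$, which is precisely the behaviour the two blow-ups are designed to resolve.

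Next I would track how the orbits extend under the two blow-ups in $\At[1;\uP]$. In polar coordinates $(R,\phi)$ on the first blow-up at $q(e_2)$, with $s=R\cos\phi$ and $\gamma=R\sin\phi$, the orbit becomes a smooth curve with $R\sim\tau/\sqrt{1+u^2}$ and $\tan\phi\sim-u(1+\tau)$, so each fiber of $\pi_N$ at fixed $\tau$ traces an arc on face ``$1$'' parametrized by $\phi\in(-\pi/2,\pi/2)$, approaching the corners $\pa\ff=\{\phi=\pm\pi/2\}$ as $u\to\pm\infty$. In the projective chart of the second blow-up near $\phi=-\pi/2$, using $\phi'=\phi+\pi/2$ and $\sigma=R/\phi'$, direct substitution into the expressions above gives $\sigma\to\tau$ along the orbit as $\phi'\to 0$. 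The limiting fiber of $\pi_N$ over $(k,\tau_0)\in G/\uP[1]$ therefore starts on face ``$2$'' (at $\sigma=\tau_0$), runs through the interior if $\tau_0>0$ or across face ``$1$'' if $\tau_0=0$, and terminates on the opposite face ``$2$'' arc; in particular $\pi_N$ extends to a b-fibration with fibers diffeomorphic to $\uP[1]$.

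With the local structure of $\pi_N$ in hand, the pullback identities \eqref{CoSL2.272} follow from the explicit chart computations combined with standard blow-up multiplicities. The identity $\beta^*\rho=\tilde\rho_0\tilde\rho_1\tilde\rho_2^2$ is immediate: $\beta_1^*s=R\cos\phi$ vanishes on face ``$1$'' (at $R=0$) and on the lift of $\pa\At[1]$ (at $\cos\phi=0$), and the codimension-$2$ second blow-up of $\pa\ff$ multiplies each factor by the new front-face defining function $\tilde\rho_2$. The identities $\pi_\uP^*\rho_0=\tilde\rho_0$ and $\pi_\uP^*\rho_1=\tilde\rho_1\tilde\rho_2^2$ are then read off from the local forms of $\tau$ and $1/\tau$ in each chart, using that on $\At$ one has $\tau=(b^2)_{11}/\det b$; the squared factor on $\tilde\rho_2$ in the latter arises because both $R$ and $\cos\phi$ vanish simultaneously at $\pa\ff$ and are resolved quadratically by the second blow-up. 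The generator $V(\uP)=\pa_x$ then acquires the form \eqref{CoSL2.273} by the chain rule applied to $u=x\tau/(1+\tau)$: inverting to write $x$ as a rational function of $(\sigma,\phi')$, one finds that $\pa_x$ contracts by exactly $\tilde\rho_1\tilde\rho_2^2$, leaving a smooth nonvanishing residual $W$ tangent to faces ``$0$'' and ``$1$'' but transverse to face ``$2$''. Finally, the lift of $G/\uP[1]=K\times\pDg[1]$ is a p-submanifold because $\pDg$ corresponds to $\gamma=0$, approaching $q(e_2)$ at the interior point $(R,\phi)=(0,0)$ of face ``$1$'' and missing $\pa\ff$ entirely; transversality to the fibers of $\pi_N$ is automatic since $G/\uP[1]$ is a section. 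The main obstacle is the chart-by-chart verification of the exact exponents in the projective coordinates of the second blow-up, but the explicit rational formulas from Lemma~\ref{CoSL2.269} reduce this to direct computation.
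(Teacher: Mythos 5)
Your overall route is the paper's: reduce by $K$-equivariance to the right $\uP_+$-orbit through diagonal matrices, use the explicit polar decomposition from Lemma~\ref{CoSL2.269}, pass to the coordinates $(\det b,\,b_{12})$ near $q(e_2)$, and follow the curves through the two blow-ups. Your formulas for $s=\det b$ and $\gamma=b_{12}$ along the orbit check out and reproduce the paper's key facts: $\det b$ is comparable to $b_{12}^2$ along each orbit, so after the first blow-up the curves run into the corner $\pa\ff$, and in the projective coordinates of the second blow-up they terminate at interior points of the new front face parametrized smoothly and monotonically by $\tau$ (the paper's $\eta\to\tau^{-1}$ in \eqref{CoSL2.189} is your $\sigma\to\tau(1+\tau)$). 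The one structural difference is that the paper obtains \eqref{CoSL2.273} by differentiating the polar decomposition at $x=0$ to get the generating vector field explicitly, \eqref{CoSL2.287}, and then lifting it through the two blow-ups chart by chart; you work only with the integrated flow and defer the vector field to ``the chain rule.'' That computation should actually be carried out: the exact orders of vanishing of $V(\uP)$ at the three faces and the transversality of $W$ to $\{\tilde\rho_2=0\}$ are precisely what the subsequent push-forward arguments consume, and they do not follow from knowing where the orbits limit.

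The genuine gap is your treatment of the middle identity in \eqref{CoSL2.272}. Your own (correct) description of the fibration --- the fiber over $\tau_0\in(0,\infty)$ terminates at interior points of face $2$ --- forces $\rho_1\circ\pi_\uP=\tau_0>0$ on the interior of face $2$, so $\pi_\uP^*\rho_1$ cannot carry a factor of $\tilde\rho_2^{2}$; face $2$ is a \emph{fixed} hypersurface of the b-fibration $\pi_\uP$, being a union of sections over the whole base. If you carry out the chart verification you defer, using your own formula $\pi_\uP^*\tau=(b_{11}^2+b_{12}^2)/\det b$, you find in the chart $(\sigma,v)=(s^2/t,\,t/s)$ (faces $1$ and $2$ being $\{\sigma=0\}$ and $\{v=0\}$) that $\pi_\uP^*\tau=\sigma\bigl(1+O(v^2)\bigr)$, i.e.\ $\pi_\uP^*\rho_1=a\tilde\rho_1$ with $a>0$ smooth --- which is also how the paper itself uses the proposition later, cf.\ \eqref{CoSL2.245} and \eqref{CoSL2.246}, where the pull-back carries the formal smooth weight $\iota_2$ at face $2$ and only the powers $\rho_{0+}^{\mu}\rho_{1+}^{-\mu}$. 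The mechanism you invoke (``$R$ and $\cos\phi$ vanish simultaneously at $\pa\ff$ and are resolved quadratically'') correctly produces the $\tilde\rho_2^{2}$ in $\beta^*\rho$, and a related factor appears when comparing $dn$ to a b-density along the fibres, but it does not apply to $\pi_\uP^*\rho_1$. As written, the chart-by-chart verification you promise will not return the exponent you assert, so you must either reconcile your argument with the stated formula or record that the computation yields $\pi_\uP^*\rho_1=\tilde\rho_1$.
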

\noindent So this blow-up `resolves' the right action of $\uP$ in the sense
that the closures of the the orbits in $G$ become the orbits of the smooth
vector field $W$ and are precisely the fibres of $\pi_\uP.$ Note however
that while the action of $\uP$ does extend smoothly to $G[1;\uP]$ the
points on the two front faces are all fixed points for the action. So
although this gives a meaning to the `quotient' formula
\begin{equation*}
G[1;N]/N[1]=G/N[1]
\label{CoSL2.139}\end{equation*}
this is not strictly correct in terms of orbit spaces.

\begin{proof} The conjugation action by $\mK$ allows the discussion to be
  reduced to the case of $\uP_+\subset G=\SL(2,\bbR)$ We proceed to compute
  the form of the generating vector field for the right action of $\uP_+,$
  which we know to be smooth on $G[1].$ In fact we have already seen that
  $V(N)$ is tangent to the boundary and non-vanishing except at
  $\mK\times\{q_1(N)\}.$

A neighborhood of $q_1(N)\in\At[1]$ is smoothly parameterized by the matrices 
\begin{equation}
\beta (t,s)=\begin{pmatrix}t+s^2&s\\s&1\end{pmatrix},\ t\ge0,\ |s|<\delta .
\label{CoSL2.288}\end{equation}
Applying $\uP_+$ on the right and taking the polar decomposition gives 
\begin{equation*}
\beta\begin{pmatrix}1&-x\\0&1\end{pmatrix}=KA,
\label{CoSL2.289}\end{equation*}
where
\begin{equation*}
\begin{gathered}
K=T\begin{pmatrix}t+s^2-xs+1&-x(t+s^2)\\x(t+s^2)&t+s^2-xs+1\end{pmatrix},
\\
\begin{aligned}
&A=\\
&T\begin{pmatrix}(t+s^2)^2+s^2+t&-x(t+s^2)^2+s(t+s^2-sx+1)\\
-x(t+s^2)^2+s(t+s^2-sx+1)&(-x(t+s^2)+s)^2+(-sx+1)^2+t\end{pmatrix},
\end{aligned}
\\
T=T(x,s,t)=((x(t+s^2))^2+(t+s^2-xs+1)^2)^{\ha}.
\end{gathered}
\end{equation*}

Differentiating at $x=0$ gives the generating vector field 
\begin{equation}
V(N)=(s^2+t+1)^{-1}\left(2st(2s^2+2t+1)\pa_t+(s^4+s^2-t^2)\pa_s-(s^2+t)\pa_{\theta}\right).
\label{CoSL2.287}\end{equation}
In particular all coefficients vanish at $q_1(N).$

The blow up of the boundary point introduces a boundary face which is an
interval. A neighbourhood of this boundary hypersurface of $[B[1];q_1(\uP_+)]$
is covered by the three coordinate systems 
\begin{equation}
\frac{s}t,t\Mover |s|<2t,\ \frac{t}{|s|},|s|\Mover t<2|s|
\label{CoSL2.186}\end{equation}
where the second pair of coordinate systems cover the endpoints and the
first actually covers the interior. From \eqref{CoSL2.180} and
\eqref{CoSL2.221} it follows that along the integral curves
\begin{equation}
c_{\tau}s^2<|t|<G_{\tau}s^2,\ \tau>0.
\label{CoSL2.187}\end{equation}
So after this first blow-up these curves approach one of the end-points of
the front face.

The second blow-up replaces each of these end-points by an interval with a
neighbourhodd covered by two coordinate systems. Restricting attention to
the end-point in $t<4s$ these are given by 
\begin{equation}
\eta=\frac{t}{s^2},\ s\Mover t<2s^2,\ \sigma=\frac{s^2}t,\ \frac{t}{s}
\Mover 2t>s^2.
\label{CoSL2.188}\end{equation}
In particular near points of the interior of the interval, either system is
admissible. It follows from \eqref{CoSL2.187} that each integral curve of
$N$ starting at a finite point, $\tau>0,$ hits the boundary in the interior
of this second front face at a unique point. Indeed along the curves defined by $b(x,\tau)$ 
\begin{equation}
\eta\longrightarrow \tau^{-1},\ s=|x|^{-1}+O(|x|^{-3})
\label{CoSL2.189}\end{equation}
and $\eta$ and $s$ are smooth functions of $x^{-1}$ and $0<\tau<\infty.$

From \eqref{CoSL2.287}, in polar coordinates at $t=0,$ $s=0,$ $S=s/t,$ 
\begin{multline*}
V=(s^2+t+1)^{-1}t\big((2(2s^2+2t+1))(S(t\pa_t-S\pa_S)\\
+(S^4t^2+S^2-1)\pa_S-(1+S^2)\pa_\theta \big)=tW
\label{CoSL2.223}\end{multline*}
where $W\big|_{t=0}\not=0.$ Making the second blow-up introduces $\eta=t/s^2$ and
$s'=s$ in $t<<s^2$ and $\sigma =t^2/s,$ $T=t/s$ in $t>>s^2$ in terms of which 
\begin{equation}
2t\pa_t+s\pa_s=s'\pa_{s'},\ t\pa_t=\eta\pa_\eta\Longrightarrow  V=(s')^2W,\ Ws'\not=0.
\label{18.10.2018.5}\end{equation}
Similar analysis in the other regions gives the stated form of $W.$
\end{proof}

It is convenient to have a `universal' version of this resolution. If we
consider the space $G[1]\times\mK$ as a bundle over $\mK$ then in each
fibre we may consider the action of the parabolic group parameterized by
$k'\in\mK$ corresponding to the unipotent group
\begin{equation*}
\uP_{k'}=k'\uP_+(k')^{-1}.
\label{SL2.4}\end{equation*}
The total space of the fibre bundle in which the action of this group in each fibre is
resolved, as above, is a compact manifold with corners
\begin{equation}
(G\times\mK)[1;\uP_*]=[G[1]\times \mK;q_1(\uP_*);\pa_{\ff}]
\label{SL2.5}\end{equation}
where the p-submanifold $q_1(\uP_*)$ is the graph of $\mK\ni k'\longmapsto
q_1(k')\in\pa G[1]$ and the second blow-up is of its boundary. The
`universal' quotient map corresponding to the collective fibration by the
$\uP_*$ is
\begin{equation}
\xymatrix{
(G\times\mK)[1;\uP_*]\ar[r]^-{/\uP_*}&(G\times\mK)/\uP_*[1]\ar[r]^-{\pi_{\mK}}&\mK
}
\label{SL2.6}\end{equation}
where the central space is the fibre bundle over $\mK$ with fibre at $k'$
the compactified space $(\mK\times k'\pDg (k')^{-1})[1]\subset G[1]$ of
$k'$-conjugates of diagonal matrices. This fibrewise action gives a
diffeomorphism to the product bundle over the last factor
\begin{equation}
\begin{gathered}
(G\times\mK)/\uP_*[1]\longrightarrow \mK\times\pDg[1]\times\mK\text{ extending}\\
G\times\mK/\uP_*\ni(k,q',k')\longmapsto (kk',(k')^{-1}q'k',k')\in\mK\times\pDg\times\mK.
\end{gathered}
\label{SL2.7}\end{equation}

The transversality of this action of $\mK$ on $G\times\mK$ means that the
projection back is a b-fibration 
\begin{equation}
\pi_L:(G\times\mK)[1;N_*]\longrightarrow G[1].
\label{SL2.9}\end{equation}

Following standard prescriptions  

\begin{lemma}\label{HCGN}
The Harish-Chandra space of $G/\uP$ is identified with the conormal space
\begin{equation}
\HC(G/\uP)=(\ilog\rho )^{\infty}\rho_0^\kappa\rho_1^{-\kappa}\cA(G/\uP[1])
\label{CoSL2.191}\end{equation}
where $\rho=\rho_0\rho_1$ is a total boundary defining function and
$\rho_1$ defines the end corresponding to $q(\uP).$
boundaries respectively. 
\end{lemma}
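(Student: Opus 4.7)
The plan is to transfer the characterization \eqref{9.11.3} of $\HC(G)$ to the homogeneous space $G/\uP$ using the resolution of the right $\uP$-action afforded by Proposition~\ref{G1N}. Following Wallach, $u\in\HC(G/\uP)$ iff $\|g\uP\|^p\,D_1D_2u/\Xi_{G/\uP}\in L^\infty(G/\uP)$ for all $p$ and all $D_1,D_2$ in the left- and ($\uP$-invariant part of the) right-enveloping algebras, where $\Xi_{G/\uP}$ is the basic spherical function on $G/\uP$, almost in $L^2$ with respect to the $G$-invariant measure. The verification of \eqref{CoSL2.191} then proceeds in three steps parallel to the proof of \eqref{9.11.3}.

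First I would check that the operators $D_1D_2$ span $\Diffb*(G/\uP[1])$ over $\CI(G/\uP[1])$. On $G[1]$ such products span $\Diffb*(G[1])$ by the hd-compactification property; Proposition~\ref{G1N} gives a fibration $\pi_\uP:G[1;\uP]\to G/\uP[1]$ under which $\uP$-invariant b-operators descend, and realises $G/\uP[1]$ as a p-submanifold of $G[1;\uP]$ transverse to the $\uP$-fibres, so the descended operators span the b-operators on $G/\uP[1]$. Second, the length function satisfies $\|g\uP\|^{-1}\sim\ilog(\rho_0\rho_1)$ on $G/\uP[1]$, which follows from the corresponding estimate on $G[1]$ combined with the pull-back relations \eqref{CoSL2.272}; this supplies the $(\ilog\rho)^\infty$ factor in \eqref{CoSL2.191}.

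The main work, and the main obstacle, is the third step: identifying the boundary behaviour $\Xi_{G/\uP}\sim\rho_0^\kappa\rho_1^{-\kappa}$ up to logarithmic corrections. The approach is to realise $\Xi_{G/\uP}$ as a fibre integral, roughly $\Xi_{G/\uP}(g\uP)=\int_\uP\Xi(gn)\,dn$, and to evaluate the push-forward using the b-fibration machinery of Lemma~\ref{CoSL2.175}. Pulling the b-density $\Xi(g)\,dg$ back to $G[1;\uP]$ via the blow-down $\beta$ and using \eqref{CoSL2.272} yields an explicit weight in $\tilde\rho_0,\tilde\rho_1,\tilde\rho_2$; push-forward along $\pi_\uP$ then produces the asymmetric weight $\rho_0^\kappa\rho_1^{-\kappa}$. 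The asymmetry is the characteristic signature of the modular function $\delta_\uP$ of the parabolic: $\uP$-orbits are short near the repelling fixed point $q_0(\uP)$ and long near the attractive fixed point $q_1(\uP)$, as is already encoded by the second blow-up at $q_1(\uP)$ in the definition of $G[1;\uP]$ and is visible in Lemma~\ref{CoSL2.269}; this produces the growth $\rho_1^{-\kappa}$ of $\Xi_{G/\uP}$ at that end, while the $\rho_0^\kappa$ decay at the other end is inherited directly from $\Xi$ on $G[1]$. Once the three steps are combined, Wallach's characterization of $\HC(G/\uP)$ translates directly into \eqref{CoSL2.191}.
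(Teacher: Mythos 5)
Your first two steps are correct and amount to the paper's own (very short) argument: the invariant vector fields on $G/\uP[1]=\mK\times\pDg[1]$ are spanned by the radial field $\tau\pa_\tau$ on $\pDg$ together with the generators of $\mK$, so the invariant enveloping algebras span $\Diffb*(G/\uP[1])$ and the length function supplies the $(\ilog\rho)^\infty$ factor. The gap is in your third step, which you yourself flag as ``the main obstacle'' and do not carry out. The difficulty is not merely that the asymptotics are left uncomputed: the object you propose to base the argument on, $\Xi_{G/\uP}(g\uP)=\int_\uP\Xi(gn)\,dn$, is divergent. Using the paper's own bookkeeping on $G[1;\uP]$: $\Xi\asymp\rho^{\kappa}\log(1/\rho)$ with $\beta^*\rho=\tilde\rho_0\tilde\rho_1\tilde\rho_2^2$, while $dn=\tilde\rho_1^{-2\kappa}\tilde\rho_2^{-2\kappa}d\bar n_{\bo}$ along the fibres, so
$\beta^*\Xi\cdot dn\asymp\tilde\rho_0^{\kappa}\tilde\rho_1^{-\kappa}\tilde\rho_2^{0}\,\bigl(\log\text{-growth}\bigr)\,d\bar n_{\bo}$,
which has no decay (indeed logarithmic growth) at the fixed hypersurface $\{\tilde\rho_2=0\}$ that the fibre closures meet, hence is not fibre-integrable. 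This is the familiar fact that $\int_N a(n)^{-\rho}\,dn$ diverges at the wall; it is exactly the borderline case that the log-rapid decay in Lemma~\ref{CoSL2.292} is there to overcome, and $\Xi$ does not have that decay.

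The fix is that no spherical function on $G/\uP$ is needed: the standard definition of $\HC(G/\uP)$ (Wallach, Crisp--Higson) uses as weight the function $\delta^{-\ha}$ itself, the square root of the modular factor in the invariant measure $\delta^{-1}\,dk\,da$ on $G/\uP_+=\mK\pDg$, together with log-rapid decay. Since $\delta^{-\ha}$ restricted to $\pDg$ is a power of $\tau$ that decays like $\rho_0^{\kappa}$ at one end of $\pDg[1]$ and grows like $\rho_1^{-\kappa}$ at the other, the identification \eqref{CoSL2.191} is immediate once the b-regularity statement of your first step is in place; this is precisely the paper's proof. Your heuristic explanation of the asymmetry of the weight in terms of the modular function of the parabolic is the right intuition, but it should enter as the definition of the weight rather than as the conclusion of a push-forward computation that does not converge.
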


\begin{proof} The invariant vector field on $\pDg$ is the radial vector
  field $\tau\pa_\tau$ so the span with the generating vector field(s) from
  $\mK$ gives b-regularity with respect to the weight $\delta^{-\ha}$ plus log-rapid decay.
\end{proof}

Having established that $G[1;\uP]$ fibres over $G/\uP[1],$ consider the
pull-back via the total blow-down map, $\beta :G[1;\uP]\longrightarrow G[1],$
of $\HC(G[1])$ to $G[1;\uP].$ In fact 
\begin{equation}
\beta^*\HC(G)\subset (\log\rho)^\infty(\rho_o\rho_1\rho_2^2)^\kappa\cA(G[1;\uP]).
\label{CoSL2.211}\end{equation}
where the weight follows from \eqref{CoSL2.272}. This is not an equality of
spaces since the pulled back functions have more regularity at the front faces.

The properties of the compactification $(G\times\mK)[1;N_*]$ also allow us
to analyse the boundary behaviour of Harish-Chandra's spherical function
for $\SL(2,\bbK).$ The spherical function is defined in terms of the
pseudocharacter given by the eigenvalue quotient
\begin{equation}
\delta :\pDg\ni\begin{pmatrix}
\tau^{-\ha}&0\\
0&\tau^{\ha}
\end{pmatrix}\longrightarrow \tau^{-1}\in[0,\infty)
\label{CoSL2.231}\end{equation}
extended to $G$ through the Iwasawa decomposition
\begin{equation}
\delta :G\longrightarrow\pDg\longrightarrow (0,\infty).
\label{CoSL2.232}\end{equation}
Then the spherical function (associated to $N)$ is 
\begin{equation}
\Phi(g)=\frac1{\Vol(\mK)}\int_\mK\delta ^{-\ha}(gk^{-1})dk.
\label{CoSL2.233}\end{equation}
Notice that the invariant measure on $G/\uP_+=\mK\pDg$ is
$\delta^{-1}dkda.$

\begin{lemma}\label{CoSL2.234} The spherical function for $\SL(2)$ with
  respect to $\uP$ is polyhomogenous conormal, positive, bi-invariant for
  the action of $\mK$ and takes the form near the boundary
\begin{equation}
\Phi(g)=-bt^{\mu}\log t+at^{\mu},\ a,b\text{ smooth},\ b\big|_{\pa G[1]}>0.
\label{CoSL2.235}\end{equation}
\end{lemma}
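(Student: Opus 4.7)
The positivity and bi-$\mK$-invariance of $\Phi$ follow immediately from \eqref{CoSL2.233}: positivity because $\delta^{-\ha}>0$, left $\mK$-invariance because $\delta$ factors through the Iwasawa projection to $\pDg$ and is therefore left $\mK$-invariant, and right $\mK$-invariance by the substitution $k\mapsto kk_0$ in the integral. The main content of the lemma is the polyhomogeneous expansion \eqref{CoSL2.235}, which I propose to obtain by interpreting $\Phi$ as a push-forward under the b-fibration $\pi_L:(G\times\mK)[1;N_*]\longrightarrow G[1]$ from \eqref{SL2.9}.

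The first step is to lift the integrand to a polyhomogeneous conormal function on $(G\times\mK)[1;N_*]$. Since $\delta^{-\ha}$ is right $N$-invariant, the function $(g,k)\mapsto\delta^{-\ha}(gk^{-1})$ descends through the universal quotient in \eqref{SL2.6}; under the trivialization \eqref{SL2.7} it is the pull-back from the $\pDg[1]$-factor of the function given by \eqref{CoSL2.231}. This is polyhomogeneous on $\pDg[1]$ with explicit exponents at the two endpoints, corresponding to $q_0(N)$ and $q_1(N)$. Applying the pull-back formula \eqref{CoSL2.272} for the two successive blow-ups defining $(G\times\mK)[1;N_*]$ in \eqref{SL2.5}, the lift is polyhomogeneous at the three front hypersurfaces with computable exponents, and the Haar measure lifts as in \eqref{CoSL2.129} to a b-density with weights of the form dictated by \S\ref{S.HC}.

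I next apply the push-forward theorem for polyhomogeneous conormal functions under b-fibrations to the full density $\delta^{-\ha}(gk^{-1})\,dg\,dk$. The theorem yields a polyhomogeneous expansion of $\Phi\,dg$ at $\pa G[1]$, with index set the extended union of those index sets coming from boundary hypersurfaces of $(G\times\mK)[1;N_*]$ mapping onto $\pa G[1]$. The decisive point is that, once the two blow-ups in \eqref{SL2.5} are carried out and the weight of the Haar measure is absorbed, two different domain hypersurfaces contribute the \emph{same} leading exponent $\mu=\kappa$ to the image face; this is the double indicial root phenomenon noted in \S\ref{S.HC}. The push-forward formula then forces a logarithmic factor at that order, giving precisely $\Phi=at^\mu-bt^\mu\log t$ with $a$ and $b$ smooth. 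The sign and positivity of $b|_{\pa G[1]}$ come from the fact that the coefficient of $\log t$ is, up to an overall sign, an integral over the boundary fibre of a nowhere-vanishing, non-negative density coming from $\delta^{-\ha}$.

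The main obstacle I anticipate is the careful bookkeeping of index sets at the three boundary hypersurfaces of $(G\times\mK)[1;N_*]$, and in particular the verification that the two exponents arriving at $\pa G[1]$ from different faces genuinely coincide: without this coincidence one would obtain a smooth expansion with no logarithm, contradicting the known $t^\kappa\log(1/t)$ upper bound on $\Xi$. Once the coincidence is established, the logarithm and the form \eqref{CoSL2.235} are automatic from the push-forward theorem.
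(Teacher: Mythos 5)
Your proposal is correct and follows essentially the same route as the paper: both interpret $\Phi$ as the push-forward under the b-fibration \eqref{SL2.9} of $\delta^{-\ha}$ times the fibre density lifted to $(G\times\mK)[1;\uP_*]$, with $\delta^{-1}$ identified as a ratio of boundary defining functions, the logarithm produced by the coincidence of the resulting exponents at the intersecting boundary hypersurfaces mapping onto $\pa G[1]$, and the positivity of $b$ read off from the positivity of the integrand over the corner. The only difference is one of emphasis: the coincidence of exponents you flag as the main obstacle is automatic in the paper's bookkeeping, since the total weight of the lifted density works out to be exactly $\beta^*t^{\kappa}$ times a positive b-density.
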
\noindent
In particular, as a weight function, $1/\Phi,$ with rapid logarithmic decay
added, gives the same space as $\rho^{\mu}$ on $G[1].$

\begin{proof} Consider the real case for $\uP_+.$ Since $\delta$
is pulled-pack from $\pDg$ using the Iwasawa decomposition with respect to
$\uP$ the integrand in \eqref{CoSL2.233} as a function on $G\times\mK$ is,
for each $k\in\mK,$ the pull-back of the corresponding function on the
$k'$-diagonal matrices under the Iwasawa decomposition for
$\uP_{k}=k\uP k^{-1}.$ These actions are resolved on $(G\times\mK)[1;N_*]$
and it follows that
\begin{equation}
\delta^{-1}(gk^{-1})=\tilde\rho _0^{-1}\tilde\rho _1\Mon (G\times\mK)[1;\uP_*]
\label{CoSL2.236}\end{equation}
is the product of the inverse of a defining functions for the `old'
boundary and a defining function for the first front face.

Thus $\Phi$ is the push-forward to $G[1]$ of
$\tilde\rho_0^{\ha}\tilde\rho_1^{-\ha}$ with respect to the fibre density
$dk$ under the b-fibration \eqref{SL2.9}. To compute the form of $\Phi,$
choose a positive b-density $\nu_G$ on $G[1].$ Then $\nu_Gdk$ is a positive
b-density on $G[1]\times K$ and lifts after the first blow-up to be of the
form $\rho _1\nu_{\bo}.$ Since the second blow-up is of a corner, which is
a boundary hypersurface of the first front face, the lift to $(G\times
K)[1;\uP_*]$ is of the form $\rho _1\rho _2\nu_{\bo}.$ Thus the spherical
function satisfies
\begin{equation}
\Phi\nu_G=(\pi_G)_* \rho _0^{\ha}\rho_2\rho_1^{\ha}\nu_{\bo}\Longrightarrow
\Phi=at^{\ha}-bt^{\ha}\log t, a,\ b\text{ smooth,} b>0\text{ near }\pa G[1].
\label{CoSL2.237}\end{equation}
Here the positivity follows from the positivity of the integrand and the
coefficient of the logarithm corresponds precisely to the integral over the
corner.

The argument for the spherical function on $\SL(2,\bbC)$ with respect to
the upper triangular Borel subgroup is very similar.
\end{proof}

\begin{lemma}\label{CoSL2.292} Averaging over a unipotent subgroup gives a
  continuous linear map 
\begin{equation}
\int_Ndn:\HC(G)\longrightarrow \HC(G/N).
\label{CoSL2.293}\end{equation}
\end{lemma}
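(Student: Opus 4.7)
The plan is to realize the averaging operator $f \mapsto \int_\uP f \, dn$ as the push-forward of densities along the b-fibration $\pi_\uP : G[1;\uP] \to G/\uP[1]$ of Proposition~\ref{G1N}. With $d\bar g$ the $G$-invariant measure on $G/\uP$ normalized so that $dg = d\bar g \cdot dn$ and $\beta : G[1;\uP] \to G[1]$ the total blow-down, the averaging reads
\begin{equation*}
\Bigl(\int_\uP f \, dn\Bigr)\, d\bar g \;=\; (\pi_\uP)_*\bigl(\beta^* f \cdot dg\bigr),
\end{equation*}
and the proof reduces to tracking conormal weights through pull-back, pointwise multiplication, and b-fibration push-forward.

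First I would compute the boundary behaviour of $\beta^* f \cdot dg$ on $G[1;\uP]$. From \eqref{CoSL2.211}, $\beta^* f \in (\ilog\rho)^\infty(\tilde\rho_0\tilde\rho_1\tilde\rho_2^2)^\kappa\cA(G[1;\uP])$, while the Haar density $dg = \rho^{-2\kappa}\nu_{\bo}$ of \eqref{CoSL2.109} lifts to a b-density on $G[1;\uP]$ with the weight factors at the three boundary hypersurfaces determined by the codimensions of the two blow-ups defining $G[1;\uP]$. The resulting product $\beta^* f \cdot dg$ is a log-rapid b-density whose remaining polynomial weight factors through $\pi_\uP$ as the pull-back of a weight from $G/\uP[1]$.

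Next I would apply Lemma~\ref{CoSL2.175} to push this density forward. Since none of the three boundary hypersurfaces of $G[1;\uP]$ is fixed by $\pi_\uP$, the integrability hypothesis is vacuous and $(\ilog\rho)^\infty$-decay is preserved. The projection formula passes the pulled-back power weight through, and the combined contribution at the $\{\rho_1 = 0\}$ end of $G/\uP[1]$ from the two source hypersurfaces $\{\tilde\rho_1 = 0\}$ (multiplicity~$1$) and $\{\tilde\rho_2 = 0\}$ (multiplicity~$2$) is handled by the same multiplicity analysis used in the proof of Lemma~\ref{CoSL2.234}. Dividing the resulting b-density by $d\bar g$, whose weight on $G/\uP[1]$ is prescribed by the Iwasawa disintegration $dg = d\bar g \cdot dn$, identifies $\int_\uP f\,dn$ with an element of $(\ilog\rho)^\infty\rho_0^\kappa\rho_1^{-\kappa}\cA(G/\uP[1]) = \HC(G/\uP)$ by Lemma~\ref{HCGN}. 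Continuity on the Fr\'echet spaces is automatic from the continuity of each step.

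The main obstacle is the careful accounting of exponents at the corner $\{\tilde\rho_1 = \tilde\rho_2 = 0\}$ of $G[1;\uP]$: the coincidence between the $\HC$-weight $\kappa$, the codimension in $\pa G[1]$ of the centre of the first blow-up, and the multiplicity~$2$ produced by the second blow-up is precisely what forces the push-forward to land in $\HC(G/\uP)$ rather than in a strictly larger weighted space. Any logarithmic corrections produced by coinciding multiplicities, as in Lemma~\ref{CoSL2.234}, are absorbed by the $(\ilog\rho)^\infty$ factor.
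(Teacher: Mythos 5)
Your overall strategy is the paper's: pull $f$ back to $G[1;\uP]$ using \eqref{CoSL2.211}, multiply by the appropriate density, and push forward along the b-fibration $\pi_\uP$ of Proposition~\ref{G1N}, reading off the weight $\rho_0^\kappa\rho_1^{-\kappa}$ of Lemma~\ref{HCGN} at the end. But there is a genuine error at the decisive step: the claim that none of the three boundary hypersurfaces of $G[1;\uP]$ is fixed by $\pi_\uP$, so that ``the integrability hypothesis is vacuous.'' The second front face $\{\tilde\rho_2=0\}$ \emph{is} fixed. The fibres of $\pi_\uP$ are the compactified orbits $\uP[1]$, closed intervals, and the proof of Proposition~\ref{G1N} shows that each orbit closure through an interior point of $G$ meets the boundary at interior points of the second front face; hence that face maps \emph{onto} $G/\uP[1]$ and is transversal to the fibres. (If no hypersurface were fixed, the fibre over an interior point of the base would have empty boundary, contradicting the fact that it is an interval.) Your reading of $\{\tilde\rho_2=0\}$ as a non-fixed face of multiplicity $2$ over $\{\rho_1=0\}$, and the accompanying appeal to the coinciding-multiplicity analysis of Lemma~\ref{CoSL2.234} at that end, are therefore misplaced: only $\{\tilde\rho_1=0\}$ lies over $\{\rho_1=0\}$, and only $\{\tilde\rho_0=0\}$ over $\{\rho_0=0\}$.

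This is not a cosmetic point, because the integrability across the fixed face is exactly borderline and is the whole content of the lemma. The pull-back $\beta^*f$ carries the weight $(\tilde\rho_0\tilde\rho_1\tilde\rho_2^2)^\kappa$, while the fibre Haar density is $dn=\tilde\rho_1^{-2\kappa}\tilde\rho_2^{-2\kappa}\,d\bar n_{\bo}$ with $d\bar n_{\bo}$ a non-vanishing b-density on $\uP[1]$ transversal to $\{\tilde\rho_2=0\}$; the product has $\tilde\rho_2$-weight \emph{zero} relative to a b-density, i.e.\ behaves like $d\tilde\rho_2/\tilde\rho_2$ near the endpoints of each fibre and is not integrable there without the $(\ilog\tilde\rho_2)^{\infty}$ decay. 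It is precisely this log-rapid decay, fed into the integrability weight $I_f$ of \eqref{CoSL2.170}, that makes the push-forward exist; and it is the remaining factor $\tilde\rho_1^{-2\kappa}$ that converts $\rho_1^{\kappa}$ into the $\rho_1^{-\kappa}$ required by \eqref{CoSL2.191}. A correct argument must therefore single out $\{\tilde\rho_2=0\}$ as the fixed hypersurface, verify the borderline integrability there, and track the $\rho_1$-weight shift explicitly, rather than routing everything through multiplicities over $\{\rho_1=0\}$.
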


\begin{proof} This follows from the properties of pull-back and
  push-forward for the space $(G\times\mK)[1;N_*].$. Following
  the lifting property \eqref{CoSL2.211} it suffices to show that along the
  central row in \eqref{CoSL2.294} push-forward gives
\begin{equation}
\int_Ndn:(\log\rho)^\infty(\rho_0\rho_1\rho_2^2)^\kappa\cA(G[1;\uP])\longrightarrow
(\log\rho)^\infty\rho_0^\kappa\rho_1^{-\kappa}\cA(G/N[1]).
\label{CoSL2.295}\end{equation}
It follows from Proposition~\ref{G1N} that
$dn=\rho_1^{-2\kappa}\rho_2^{-2\kappa} d\bar n_{\bo},$ where $d\bar n_{\bo}$
is a non-vanishing b-measure on the closed interval $N[1]$ -- which is
transversal to the boundary $\{\rho_2=0\}.$ Thus the decay at $\rho_2=0$ in
\eqref{CoSL2.211} is indeed sufficient to give integrability across the
fixed hypersurface $\{\rho_2=0\}$ and the extra factor of
$\rho_1^{-2\kappa}$ gives the change of weight in \eqref{CoSL2.295}.
\end{proof}

It is immediate that the map \eqref{CoSL2.295} is surjective but in fact
\eqref{CoSL2.293} is also surjective although this is not so elementary.

\section{$\HC(G/N)$ as a module}\label{S.mod}

For a unipotent subgroup and
\begin{multline}
f\in\CIc(G),\ u\in\CIc(G/\uP),\\
f*u(h)=\int_Gf(g)u(g^{-1}h)dg=\int_Gf(hg^{-1})u(g)dg\in\CIc(G/\uP)
\label{CoSL2.198}\end{multline}
defines a continuous linear map
\begin{equation}
f*:\CIc(G/\uP)\longrightarrow \CIc(G/\uP).
\label{CoSL2.199}\end{equation}
For the upper triangular case, fixing $h=kq,$ $(k,q)\in\mK\pDg,$
$$
f*u(k,q)=\int_{\mK\times\pDg}S(f)(k,q,k',q')u(k',q')dk'dq'
$$
where for fixed $k$ and $q$
\begin{multline}
S(f)(k,q;k',q')\\
=(\pi_N)_*f(k(q(q')^{-1})(n')^{-1})(k')^{-1})u(k',q')\delta(q')^{-1}dn=\tilde
S(f)(k\tilde q(k')^{-1}),\\
n'=q'n(q')^{-1},\ \tilde q=q(q')^{-1}
\label{CoSL2.277}\end{multline}
and
\begin{equation*}
dn'=\delta(q')dn\Longrightarrow 
\tilde S(f)(k,\tilde q,k')
=(\pi_N)_*f(k\tilde qn^{-1}(k')^{-1})\Mon \mK\pDg\times\mK\pDg
\label{CoSL2.291}\end{equation*}
where $\pi_N$ is the push-forward map in Lemma~\ref{CoSL2.292}.

This corresponds to the resolution given by the space
$(G\times\mK)[1;\uP_*].$ Thus it follows as in Lemma~\ref{CoSL2.292} that
\begin{equation}
\tilde S:\HC(G)\longrightarrow 
(\ilog\rho)^{\infty}\rho_1^{-\kappa}\rho_0^{\kappa}\cA(\mK\times\pDg[1]\times\mK)
\label{CoSL2.220}\end{equation}
where we have used the push-forward theorem for the b-fibration \eqref{SL2.7}.
\begin{proposition}\label{CoSL2.218} The map \eqref{CoSL2.199} extends by
  continuity to a bilinear map
\begin{equation}
\HC(G)\times\HC(G/N)\longrightarrow \HC(G/N)
\label{14.11.2018.17}\end{equation}
making $\HC(G/N)$ a module over $\HC(G)$ extending the product
\eqref{CoSL2.199}; the action of $\HC(G)$ is through a family of $\bbR_+$-invariant
b-smoothing operators on $G/N[1].$
\end{proposition}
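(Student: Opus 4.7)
The plan is to repeat the geometric push-forward argument of Section~\ref{S.Con} that proved Proposition~\ref{CoSL2.174}, now with the input on $G$ and the output on $G/\uP.$ Since $u\in\HC(G/\uP)$ is right-$\uP$-invariant, the convolution rearranges as
$$(f*u)(h)=\int_{G/\uP}K_f(h,\bar g)\,u(\bar g)\,d\bar g,\qquad K_f(h,\bar g)=\int_{\uP}f(hn^{-1}\bar g^{-1})\,dn,$$
and the conjugation used in \eqref{CoSL2.277} identifies $K_f(kq,k'q')$ with $\delta(q')\,\tilde S(f)(k,q(q')^{-1},k').$ Thus the bound \eqref{CoSL2.220}
$$\tilde S(f)\in(\ilog\rho)^{\infty}\rho_0^{\kappa}\rho_1^{-\kappa}\cA(\mK\times\pDg[1]\times\mK)$$
from Lemma~\ref{CoSL2.292} already captures all the conormal information about $f$ that is needed.

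To realise the remaining integration over $G/\uP$ as a push-forward, introduce the double space
$$(G/\uP)[2]:=\mK\times\mK\times\pDg[2],\qquad \pDg[2]:=[\pDg[1]^2;\{(0,0)\},\{(1,1)\}],$$
that is, the $\mK^2$-extension of the b-product of intervals in \eqref{14.11.2018.6}. The two projections $\pi_L,\pi_R:(G/\uP)[2]\to G/\uP[1]$ and the twisted multiplication
$$\chi:(kq,k'q')\longmapsto (k,q(q')^{-1},k')\in\mK\times\pDg[1]\times\mK$$
are b-fibrations by the same computation that proves \eqref{CoSL2.125}, and the invariant measure $d\bar g=\delta^{-1}dk\,dq$ lifts to an edge density whose weights at the three new boundary hypersurfaces of $\pDg[2]$ are obtained exactly as in \eqref{CoSL2.128}--\eqref{CoSL2.129}. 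The convolution then takes the geometric form
$$f*u=(\pi_L)_*\bigl(\chi^*\tilde S(f)\cdot\pi_R^*u\cdot d\bar g\bigr)$$
completely analogous to \eqref{CoSL2.126}, and Lemma~\ref{CoSL2.175} reduces the proposition to a weight bookkeeping on $(G/\uP)[2].$

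At each non-fixed hypersurface of $\pi_L$ the combined weight of $\chi^*\tilde S(f),$ $\pi_R^*u$ and $d\bar g$ must work out to the $\rho_0^{\kappa}\rho_1^{-\kappa}$ of Lemma~\ref{HCGN}, with log-rapid decay inherited from the $(\ilog\rho)^{\infty}$ factors in both $\tilde S(f)$ and $u;$ at each fixed hypersurface the combined weight must become a strictly positive power so that Lemma~\ref{CoSL2.175} produces a continuous push-forward landing in $\HC(G/\uP).$ The $\bbR_+$-invariance of the resulting operator is then automatic because $\chi,$ $\pi_L$ and $\pi_R$ are all equivariant for the diagonal action of $\pDg$ on the two $\pDg[1]$ factors, and the b-smoothing claim follows from the conormality of $\tilde S(f)$ in the kernel variables. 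The substantive obstacle, and the only place where genuine calculation is needed, is this weight cancellation at the front face created by blowing up $(1,1)\in\pDg[1]^2$: the negative exponent $\rho_1^{-\kappa}$ of $\tilde S(f),$ reflecting the log-almost-$L^2$ behaviour of the spherical function, must be exactly absorbed by the positive powers supplied by $u$ and by the Jacobian $\delta^{-1}$ of $d\bar g$ at that face. This is the module-theoretic analogue of the cancellation that made $\HC(G)$ a convolution algebra in Proposition~\ref{CoSL2.174}.
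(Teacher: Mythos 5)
Your argument is essentially the paper's own: the kernel bound \eqref{CoSL2.220} for $\tilde S(f)$, realized as a conormal density on the b-double space $G/N[2;\bo]=\mK^2\times\pDg[2]$, followed by the standard pull-back/multiply/push-forward action diagram for the two stretched projections and the weight bookkeeping of Lemma~\ref{CoSL2.175}. The only differences are presentational --- you factor the kernel explicitly as $\chi^*\tilde S(f)$ through the lifted twisted product, and you assert rather than compute the final weight cancellation at the front faces, which is the step the paper records in \eqref{CoSL2.219} and the subsequent display.
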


\begin{proof} From \eqref{CoSL2.220} it follows that the kernel map
  \eqref{CoSL2.277} has image in the corresponding space of conormal densities
\begin{equation}
S:\HC(G)\longrightarrow
(\log\rho')^{\infty}\rho^{\kappa}_{0R}\rho^{\kappa}_{0L}\rho^{-\kappa}_{1R}
\rho_{1L}^{-\kappa}\iota''\cA(G/N[2;\bo])dk'dq'
\label{CoSL2.219}\end{equation}
where $\rho '$ is a collective defining function for the `old boundaries'
and similarly $\iota''$ corresponds to smoothness up to the two front
faces. Now the mapping property \eqref{14.11.2018.17} is a direct
consequence of the action diagram for the b-calculus
\begin{equation}
\xymatrix{
&G/N[2]\ar_{\pi_L}[dl]\ar^{\pi_R}[dr]\\
G/N[1]&&G/N[1].
}
\end{equation}
Namely the product of the kernel and the pull-back of an element of
$\HC(G/N)$ from the right is a density in
\begin{equation*}
(\log\rho)^{\infty}\rho^{\kappa}_{1R}
\rho_{1L}^{\kappa}\rho_{\ff0}^{\kappa}\rho^{-\kappa}_{\ff1}\iota''\cA(G/N[2;\bo])dk'dq'
\end{equation*}
where now $\rho$ is a total boundary defining function. This pushes forward
into $\HC(G/N)$ giving \eqref{14.11.2018.17}.
\end{proof}

The natural action of the diagonal group $\pDg$ on $\CIc(G/N)$
includes the pseudocharacter
\begin{equation}
\CIc(G/N)\times \pDg\ni (u,\lambda)\longmapsto \delta^{-\ha}(\lambda)u(k\mu\lambda^{-1})
\end{equation}
where $G/N$ is identified with $\mK\pDg.$ The convolution action of
$v\in\CIc(\pDg)$ is therefore
\begin{equation}
u*v(k\mu)=\int u(k\mu\lambda^{-1})v(\lambda)\delta(\lambda)^{-\ha}d\lambda.
\label{SL2.2}\end{equation}

\begin{lemma}\label{SL2.1} The product \eqref{SL2.2} extends by continuity
  to a jointly continuous bilinear map
\begin{equation}
\HC(G/N)\times\HC(\pDg)\longrightarrow \HC(G/N).
\end{equation}
\end{lemma}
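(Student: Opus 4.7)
The strategy parallels Propositions~\ref{CoSL2.174} and~\ref{CoSL2.218}: construct a resolved double space on which the twisted convolution~\eqref{SL2.2} extends to a b-fibration and then invoke the pull-back/push-forward calculus of Lemma~\ref{CoSL2.175}. The identification $G/N=\mK\pDg$ makes the right action of $\pDg$ trivial in the $\mK$-factor, so $\mK$ plays the role of an inert parameter and the analysis reduces to the multiplicative convolution on $\pDg$ twisted by $\delta^{-\ha}$.

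The plan is to take as double space $\mK\times\pDg[2]$, where
\begin{equation*}
\pDg[2]=\bigl[\pDg[1]^2;\{(0,0)\},\{(\infty,\infty)\}\bigr]
\end{equation*}
is an $\mK$-family of copies of the b-product of intervals pictured in Figure~\ref{GL2}. The same local analysis used for $\GL_+$ at~\eqref{14.11.2018.6} shows that the twisted product $\chi(k,\mu,\lambda)=(k,\mu\lambda^{-1})$, the left projection $\pi_L(k,\mu,\lambda)=(k,\mu)$ and the right projection $\pi_R(k,\mu,\lambda)=\lambda$ lift to b-fibrations $\chi,\pi_L:\mK\times\pDg[2]\to G/N[1]$ and $\pi_R:\mK\times\pDg[2]\to\pDg[1]$, and~\eqref{SL2.2} is recast as the b-push-forward
\begin{equation*}
(u*v)\,dk\,d\mu=(\pi_L)_*\bigl(\chi^*u\cdot\pi_R^*v\cdot\delta^{-\ha}(\lambda)\,d\lambda\,dk\,d\bar\mu_{\bo}\bigr),
\end{equation*}
with $d\bar\mu_{\bo}$ a b-density in the $\mu$-factor.

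Next I would carry out the weight bookkeeping. Using $\HC(G/N)=(\ilog\rho)^\infty\rho_0^\kappa\rho_1^{-\kappa}\cA(G/N[1])$ from Lemma~\ref{HCGN} and $\HC(\pDg)=(\ilog\rho)^\infty\cA(\pDg[1])$, Lemma~\ref{CoSL2.175} places $\chi^*u$ in the conormal class carrying the exponent $+\kappa$ at the hypersurfaces $\{\mu=0\}$, $\{\lambda=\infty\}$ where $\chi\to 0$, the exponent $-\kappa$ at $\{\mu=\infty\}$, $\{\lambda=0\}$ where $\chi\to\infty$, and no power weight at the two new front faces (since $\chi$ is smooth there). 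The pull-back $\pi_R^*v$ contributes log-rapid decay at the four non-fixed hypersurfaces of $\pi_R$ with no power weight. The twist equals $\delta^{-\ha}(\lambda)\,d\lambda=\tau^{\kappa}\,d\tau/\tau$ on $\pDg[1]$, a b-density carrying exponent $+\kappa$ at $\lambda=0$ and $-\kappa$ at $\lambda=\infty$. Taking the product, the net exponent cancels to zero at each of the two fixed hypersurfaces of $\pi_L$ (namely $\{\lambda=0\}$ and $\{\lambda=\infty\}$) and at each of the two front faces, while the exponents $+\kappa$ at $\{\mu=0\}$ and $-\kappa$ at $\{\mu=\infty\}$ survive; the log-rapid decay from $\pi_R^*v$ then makes the density integrable along the $\pi_L$-fibres, and Lemma~\ref{CoSL2.175} pushes it forward into $\HC(G/N)\cdot dk\,d\mu$, with joint continuity coming from the continuity of $\chi^*$, $\pi_R^*$, multiplication and $(\pi_L)_*$.

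The main obstacle is precisely the exponent bookkeeping at the two blown-up corners of $\pDg[2]$ together with $\{\lambda=0\}$ and $\{\lambda=\infty\}$: the asymmetric weight $\rho_0^\kappa\rho_1^{-\kappa}$ pulled back through $\chi$ must combine with the asymmetric twist $\tau^\kappa$ of $\delta^{-\ha}d\lambda$ to produce the exact cancellations described above and leave the correct net weights at $\{\mu=0\}$ and $\{\mu=\infty\}$. Expressed in the logarithmic variable $t=\log\tau$, this is the statement that $U*V(s)=\int U(s-t)V(t)e^{\kappa t}\,dt$ preserves the class of $e^{\kappa s}$-weighted Schwartz functions in $s$; the geometric proof above accomplishes this uniformly in the $\mK$-parameter without appeal to Fourier analysis.
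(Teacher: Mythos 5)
Your strategy is exactly the paper's: the proof given there consists only of recording $\HC(\pDg)=(\ilog\rho)^{\infty}\cA(\pDg[1])$ and the diagram of b-fibrations over ($\mK$ times) the b-double space of Figure~\ref{GL2}, leaving the weight count to the reader, and you have supplied that count. There is, however, one slip in it which, taken literally, is inconsistent with your own conclusion. The two front faces of $\pDg[2]$ are \emph{non-fixed} hypersurfaces for $\pi_L$ (on the face over $(0,0)$ one has $\mu=\tau_\mu s\to 0$, so that face maps onto the end $\rho_0=0$ of $\pDg[1]$), and the push-forward theorem \eqref{CoSL2.169} therefore requires the density to carry there the exponent of $\pi_L^*\bigl(\rho_0^{\kappa}\rho_1^{-\kappa}\bigr)$, namely $+\kappa$ on the face over $(0,0)$ and $-\kappa$ on the face over $(\infty,\infty)$; a net exponent $0$ at the face over $(0,0)$, as you assert, would only push forward into $(\ilog\rho_0)^{\infty}\cA$ near $\rho_0=0$, losing the $\rho_0^{\kappa}$ decay required for membership in $\HC(G/N)$. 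In fact the correct count gives exactly what is needed: $\chi^*u\sim(\mu/\lambda)^{\kappa}=(\tau_\mu/\tau_\lambda)^{\kappa}$ contributes exponent $0$ at both front faces, but the twist $\delta^{-\ha}(\lambda)=\lambda^{\kappa}=(\tau_\lambda s)^{\kappa}$ contributes $+\kappa$ both at $\{\lambda=0\}$ \emph{and} at the front face over $(0,0)$ (and $-\kappa$ at $\{\lambda=\infty\}$ and at the other front face). So the totals are $0$ at the two fixed hypersurfaces $\{\lambda=0\}$, $\{\lambda=\infty\}$ --- where the log-rapid decay of $\pi_R^*v$ gives integrability against the b-density --- and $+\kappa$, $+\kappa$, $-\kappa$, $-\kappa$ at $\{\mu=0\}$, the face over $(0,0)$, $\{\mu=\infty\}$ and the face over $(\infty,\infty)$ respectively, which is precisely $\pi_L^*\bigl(\rho_0^{\kappa}\rho_1^{-\kappa}\bigr)$. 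With that correction the argument closes as you describe.
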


\begin{proof} For the radial compactification of the group $\bbR_+$ the Harish-Chandra
space is 
\begin{equation}
\HC(\pDg)=(\ilog\rho)^{\infty}\cA(\pDg[1]).
\label{SL2.1a}\end{equation}
The composition can be realized in terms of the diagram of b-fibrations
\begin{equation}
\xymatrix{
G/N[1]\\
G/N[2]\ar[r]^{\pi_R}\ar[u]^{\pi_L}\ar[d]_{\chi}&G/N[1]\\
\pDg[2]
}
\label{SL2.2a}\end{equation}
where $\pDg[2]=\pDg[2,\bo]$ and the lower map is the lift of the product
$(q,q')\longmapsto q(q')^{-1}.$

\end{proof}

\section{Parabolic induction}\label{S.ind}

By definition, a tempered representation of a reductive group $G$ is a
smooth representation in a Fr\'echet space $V$ -- so a smooth map 
\begin{equation}
\pi:G\longrightarrow \Hom(V),\ \pi(gh)=\pi(g)\pi(h)
\label{CoSL2.253}\end{equation}
which has the regularity property that the convolution integral 
\begin{equation}
\CIc(G)\ni f\longrightarrow \int _G\phi(gh^{-1})\pi(h)v
\label{CoSL2.252}\end{equation}
extends by continuity to a jointly continuous bilinear map 
\begin{equation}
\tilde\pi:\HC(G)\times V\longrightarrow V
\label{CoSL2.254}\end{equation}
which is a module over convolution 
\begin{equation}
\tilde\pi(f*g,v)=\tilde\pi(f,\tilde\pi(g))
\label{CoSL2.255}\end{equation}
and is surjective 
\begin{equation}
\tilde\pi(\HC(G),V)=V.
\label{CoSL2.256}\end{equation}
In fact this last property is a consequence of the others. Conversely,
\eqref{CoSL2.254}, \eqref{CoSL2.255} an \eqref{CoSL2.256} (apparently)
imply \eqref{CoSL2.253} exists so that \eqref{CoSL2.254} is recovered from
\eqref{CoSL2.252}.

Now, we wish to consider the functor of parabolic induction -- construction
of representations of $G=\SL(2,\bbR)$ from representation of $L=D\cup -D$
using the upper triangular parabolic $LN,$ $N=N_+.$ To do so consider the
action of $L$ on the right on $G/N_+=KD.$ This gives rise to a diagram of
maps
\begin{equation}
\xymatrix{
&G/N\\
&G/N\times L\ar[dl]_C\ar[dr]^R\ar[u]^L\\
G/N&& L
}
\label{CoSL2.257}\end{equation}
where (this may be a bad choice of normalization) the top map is the
left projection, the lower left map is product map , $(kd,l)\longmapsto kdl,$ and
the lower right map is projection \emph{and} inversion, $(kd,l)\longmapsto l^{-1}.$

Now, if we define the compactification of the product to be the b-stretched product
\begin{equation}
(G/N\times L)[1]=K\times [D[1]\times L[1],\{q(e_1),q(e_1)\},\{q(e_2),q(e_2)\}]
\label{CoSL2.258}\end{equation}
obtained by blowing up four of the eight corners, where $L[1]=D[1]\cup-D[1],$ then:-

\begin{lemma}\label{CoSL2.259} The diagram of fibrations \eqref{CoSL2.257}
  extends to a diagram of b-fibrations 
\begin{equation}
\xymatrix{
&G/N[1]\\
&(G/N\times L)[1]\ar[dl]_{C}\ar[dr]^{R}\ar[u]^{L}\\
G/N[1]&& L[1].
}
\label{CoSL2.260}\end{equation}
\end{lemma}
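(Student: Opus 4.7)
The plan is to verify the b-fibration property for each of the three maps after reducing to the factor $D[1] \times L[1]$, since the compact factor $K$ has no boundary and the three maps act smoothly in $K$ (for $C$, the $K$-component is right-multiplied by $\pm I \in K$ depending on the sign of the $L[1]$-coordinate, which is a diffeomorphism of $K$). The projections $L$ and $R$ reduce, up to the inversion diffeomorphism of $L[1]$ that interchanges the two endpoints of each connected component, to the factor projections $D[1] \times L[1] \to D[1]$ and $D[1] \times L[1] \to L[1]$. These are manifestly b-fibrations on the unblown-up product and remain b-fibrations after the blow-ups of \eqref{CoSL2.258}, since the blow-up centres are isolated corners contained in fibres of each projection, so their lifts factor through the blow-down map without destroying the b-submersion property.

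The essential content is the central map $C \colon (kd, l) \mapsto kdl$. On the parameter level, writing $d = \mathrm{diag}(d_1, d_1^{-1})$ and $l = \mathrm{diag}(l_1, l_1^{-1})$ and absorbing the sign of $l_1$ into the $K$-factor, the map becomes the (twisted) multiplication $(d_1, |l_1|) \mapsto d_1 |l_1|$; this is precisely the b-product structure modelled by \eqref{14.11.2018.6}, and the four centres of blow-up in \eqref{CoSL2.258} are exactly the corners of $D[1] \times L[1]$ at which this multiplication requires resolution to become a smooth b-map. Near a blown-up corner I would introduce b-polar coordinates $\rho$ (the defining function of the new front face) and $\sigma$, with $d_1 = \rho \cdot (\text{smooth positive})$ and $|l_1| = \rho\sigma \cdot (\text{smooth positive})$ in one chart; the product becomes $\rho^2 \sigma$ times a smooth positive function, exhibiting $C$ as a b-map with boundary multiplicities $(2,1,1)$ at the new front face and the lifts of the two old boundary hypersurfaces respectively. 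The b-submersion property then follows from the surjectivity of the pushforward of $\rho\partial_\rho$ and $\sigma\partial_\sigma$ onto the one-dimensional b-tangent space of $D[1]$ at the image, together with the analogous argument in the complementary chart.

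The hard part will be carrying out this analysis uniformly at all four blown-up corners and checking that at the unchanged corners of $D[1] \times L[1]$ the map $C$ extends smoothly because one parameter remains bounded in the interior while the other approaches a boundary, yielding a direct monomial factorization of the pulled-back boundary defining function on the target. The no-collapse condition required for the b-fibration property is automatic since the targets $D[1]$ and $L[1]$ are one-dimensional and have no codimension-two corners, so no boundary hypersurface of the source can map into a corner of the target. Combined with the analysis of $L$ and $R$ above, this completes the promotion of the diagram \eqref{CoSL2.257} to the diagram of b-fibrations \eqref{CoSL2.260}.
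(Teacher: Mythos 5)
Your overall route is the one the paper intends (its own proof is the single sentence that this is the stretched product for the multiplicative group $L$, ultimately $D$): strip off the $K$ factor, note that the two projections off a b-stretched product of intervals are b-fibrations, and reduce the central map $C$ to the model \eqref{14.11.2018.6}. The reduction to $D[1]\times L[1]$ and the treatment of $L$ and $R$ are fine. The problem is in your local analysis of $C$ at the blown-up corners, where you have conflated the two types of corner of $D[1]\times L[1]$. In the coordinates $d_1,|l_1|\in(0,\infty)$ the product $d_1|l_1|$ already extends as a monomial b-map at the corners where $d_1$ and $|l_1|$ tend to the \emph{same} end (both to $0$ or both to $\infty$); the corners that genuinely obstruct the extension of $C$ are those where they tend to \emph{opposite} ends, since there $d_1|l_1|$ has no limit. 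Your chart, with $d_1=\rho\cdot(\text{smooth}>0)$ and $|l_1|=\rho\sigma\cdot(\text{smooth}>0)$ and product $\rho^2\sigma$, forces both coordinates to vanish, so it describes a corner at which no resolution was needed in the first place; it cannot be the local model at a corner ``at which this multiplication requires resolution,'' which is how you introduce it. (This is the same discrepancy as your appeal to \eqref{14.11.2018.6}: that model is the quotient $\sigma/\tau$, whose singular corners are the diagonal ones in $(\sigma,\tau)$; transporting it to the product $d_1|l_1|$ requires the substitution $\tau=1/|l_1|$, which your subsequent computation does not incorporate.)

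At a corner that actually needs blowing up, say $d_1\to0$, $|l_1|\to\infty$, the local boundary defining functions are $x=d_1$ and $y=1/|l_1|$, and after blowing up $\{x=y=0\}$ one has $x=\rho s$, $y=\rho t$ with $d_1|l_1|=s/t$. So the front face is a \emph{fixed} hypersurface of $C$ -- it sweeps over all of $D[1]$, exactly as the red curves do in Figure~\ref{GL2} -- and the target defining functions pull back to the lifts of the two old hypersurfaces with multiplicity $1$ and to the front face with multiplicity $0$, not with your multiplicities $(2,1,1)$. This is not a cosmetic point: the fixed/non-fixed status and the exponents feed directly into the pull-back/push-forward weights used for \eqref{CoSL2.261}, and your version would give the wrong integrability condition there. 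Relatedly, your claim that at the unchanged corners ``one parameter remains bounded in the interior while the other approaches a boundary'' describes an edge, not a corner; what actually saves $C$ at the unblown-up corners is that there both coordinates tend to the same end, so the product is monomial. You should redo the corner bookkeeping consistently (deciding once and for all whether the $L$ factor is parametrized by $l$ or by $l^{-1}$, which is where the diagonal/antidiagonal labelling in \eqref{CoSL2.258} comes from) and then verify the b-submersion condition in the chart $(s/t)$ or $(t/s)$ on the front face.
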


\begin{proof} Basically this is the stretched product for the multiplicative group
  $L$ and ultimately $D.$
\end{proof}

The Harish-Chandra space of $L$ is 
\begin{equation}
\HC(L)=(\ilog\rho )^{\infty}\cA(L[1]).
\label{CoSL2.262}\end{equation}
That is, the bounded conormal functions with log-rapid decay, without weight.

As above, we can deduce a product from \eqref{CoSL2.260} as a bilinear map 
\begin{equation}
\begin{gathered}
\HC(G/N)\times\HC(L)\longrightarrow \HC(G/N),\\
\phi\circ\psi=L_*(C^*\phi\cdot R^*(\delta^{\ha} \psi) dl).
\end{gathered}
\label{CoSL2.261}\end{equation}
Here $\delta :L=\begin{pmatrix}l&0\\0&l^{-1}
\end{pmatrix}\longrightarrow l^2.$ So $\delta (l^{-1})=\delta (l)^{-1}.$ 

\begin{proof}
The compactified space is really two copies of the product $K\times D[2]$
where $D[2]$ is the b-resolution of $D^2,$ so with the two diagonal corners
blown up. I believe $dl$ is the b-differential (so confusingly $dl/l).$ If
I have not messed up here, the factor $\delta ^{\ha}$ shifts the weighting
on the unweighted space $\HC(L)$ so that it looks like the restriction of
$\HC(G/N)$ to $L[1],$ as a p-submanifold of $G/N[1].$ As a result this
should be like the action of $\HC(G)$ on $\HC(G/N)$ below.
\end{proof}

For any Fr\'echet space $V$ there is no problem in defining $\HC(G;V),$
$\HC(G/N;V)$ and so on, just as the subspace of \ci\ maps into $V$ which
satisfy the same estimates as $\HC$ but for each of the seminorms on $V.$
Now, the induced representation corresponding to $\pi$ acts on a Fr\'echet
space which is a closed subspace of $\HC(G/N;V).$ Namely 
\begin{equation}
\HC_{\pi}(G/N;V)=\{u\in\HC(G/N;V);u(kdl)=\delta(l)^{-\ha}\pi(l)^{-1}u(kd)\}.
\label{CoSL2.263}\end{equation}

This is supposed to carry an induced tempered representation of $G.$
Clearly $\HC(G;N)$ itself has a left action of $G$ and this leaves
$\HC_{\pi}(G/N;V)$ invariant.

The first claim is that the condition in \eqref{CoSL2.263} can be expressed
in terms of the maps in \eqref{CoSL2.260}.

If $u\in\HC(G/N;V),$ $\tilde\pi$ is the bilinear map from $\pi$ and
$\psi\in\HC(L)$ then $L^*u:G/N\times L\longrightarrow V$ with corresponding
boundary behaviour on $(G/N\times L)[1].$ Then consider the
composite map corresponding to \eqref{CoSL2.263}
\begin{equation}
\delta (l)^{\ha}\psi(l)\pi(l)u(kdl):G/N\times L\longrightarrow V
\label{CoSL2.264}\end{equation}
Then the condition \eqref{CoSL2.263} should imply, and actually reduce to 
\begin{equation}
\int \delta (l)^{\ha}\psi(l)\pi(l)u(kdl)=\tilde\pi(\psi,u(kd))
\label{CoSL2.266}\end{equation}
and this in turn is written more compactly as
\begin{equation}
L_*(R^*(\delta^{\ha} \psi\pi)C^*u\cdot dl)=\tilde\pi(\psi,u)\Min\HC(G/N;V).
\label{CoSL2.267}\end{equation}
Of course in C+H this is written in terms of quotients of completed tensor products.

\section{Intertwining}\label{S.int}

\begin{figure}
\begin{tikzpicture}
\def\R{60};
\def\n{10}; 
\def\h{7/10*\R};

\coordinate (orc) at (0 pt, 0 pt);
\path [name path=orcircle] (orc) circle [radius=\R pt]; 

\pgfmathsetmacro\cenblowr{2 *\R / (\n+1)};
\pgfmathsetmacro\cenblowy{\h*(1-2/(\n +1))};
\coordinate (cenblowtopc) at (0 pt, \cenblowy pt);
\path [name path=cenblowtop] (cenblowtopc) circle [radius=\cenblowr pt]; 
\coordinate (cenblowbotc) at (0 pt, -\cenblowy pt);
\path [name path=cenblowbot] (cenblowbotc) circle [radius=\cenblowr pt]; 

\pgfmathsetmacro\bigr{\R / 2};
\pgfmathsetmacro\secblowy{\h+\bigr-\R/8};
\coordinate (secblowtopc) at (0 pt, \secblowy pt);
\path [name path=secblowtop]  (secblowtopc) circle [radius=\bigr pt]; 
\coordinate (secblowbotc) at (0 pt, -\secblowy pt);
\path [name path=secblowbot]  (secblowbotc) circle [radius=\bigr pt]; 

\path [name path=diag] (0 pt, -\R pt) -- (0 pt, \R pt); 

\path[name intersections = {of = orcircle and secblowtop, by = {ortopa, ortopb}}];
\pgfmathanglebetweenpoints{\pgfpointanchor{orc}{center}}{\pgfpointanchor{ortopa}{center}}
\edef\angleortopa{\pgfmathresult}
\pgfmathanglebetweenpoints{\pgfpointanchor{orc}{center}}{\pgfpointanchor{ortopb}{center}}
\edef\angleortopb{\pgfmathresult}
\path[name intersections = {of = orcircle and secblowbot, by = {orbota, orbotb}}];
\pgfmathanglebetweenpoints{\pgfpointanchor{orc}{center}}{\pgfpointanchor{orbota}{center}}
\edef\angleorbota{\pgfmathresult}
\pgfmathanglebetweenpoints{\pgfpointanchor{orc}{center}}{\pgfpointanchor{orbotb}{center}}
\edef\angleorbotb{\pgfmathresult}
\pgfmathsetmacro\angleorbotb{\angleorbotb-360};
\draw[blue, thick] (orc)+(\angleortopb:\R pt) arc (\angleortopb:\angleorbota:\R pt);
\draw[blue, thick] (orc)+(\angleorbotb:\R pt) arc (\angleorbotb:\angleortopa:\R pt);

\path[name intersections = {of = secblowtop and cenblowtop, by = {sectopa, sectopb}}];
\pgfmathanglebetweenpoints{\pgfpointanchor{secblowtopc}{center}}{\pgfpointanchor{sectopa}{center}}
\edef\anglesectopa{\pgfmathresult}
\pgfmathanglebetweenpoints{\pgfpointanchor{secblowtopc}{center}}{\pgfpointanchor{sectopb}{center}}
\edef\anglesectopb{\pgfmathresult}
\pgfmathanglebetweenpoints{\pgfpointanchor{secblowtopc}{center}}{\pgfpointanchor{ortopa}{center}}
\edef\anglesectopA{\pgfmathresult}
\pgfmathanglebetweenpoints{\pgfpointanchor{secblowtopc}{center}}{\pgfpointanchor{ortopb}{center}}
\edef\anglesectopB{\pgfmathresult}
\draw[cyan, thick] (secblowtopc)+(\anglesectopb:\bigr pt) arc (\anglesectopb:\anglesectopA:\bigr pt);
\draw[cyan, thick] (secblowtopc)+(\anglesectopB:\bigr pt) arc (\anglesectopB:\anglesectopa:\bigr pt);
\path[name intersections = {of = secblowbot and cenblowbot, by = {secbota, secbotb}}];
\pgfmathanglebetweenpoints{\pgfpointanchor{secblowbotc}{center}}{\pgfpointanchor{secbota}{center}}
\edef\anglesecbota{\pgfmathresult}
\pgfmathanglebetweenpoints{\pgfpointanchor{secblowbotc}{center}}{\pgfpointanchor{secbotb}{center}}
\edef\anglesecbotb{\pgfmathresult}
\pgfmathanglebetweenpoints{\pgfpointanchor{secblowbotc}{center}}{\pgfpointanchor{orbota}{center}}
\edef\anglesecbotA{\pgfmathresult}
\pgfmathanglebetweenpoints{\pgfpointanchor{secblowbotc}{center}}{\pgfpointanchor{orbotb}{center}}
\edef\anglesecbotB{\pgfmathresult}
\draw[cyan, thick] (secblowbotc)+(\anglesecbotb:\bigr pt) arc (\anglesecbotb:\anglesecbotA:\bigr pt);
\draw[cyan, thick] (secblowbotc)+(\anglesecbotB:\bigr pt) arc (\anglesecbotB:\anglesecbota:\bigr pt);

\path[name intersections = {of = secblowtop and cenblowtop, by = {sectopa, sectopb}}];
\pgfmathanglebetweenpoints{\pgfpointanchor{cenblowtopc}{center}}{\pgfpointanchor{sectopa}{center}}
\edef\anglecentopa{\pgfmathresult}
\pgfmathanglebetweenpoints{\pgfpointanchor{cenblowtopc}{center}}{\pgfpointanchor{sectopb}{center}}
\edef\anglecentopb{\pgfmathresult}
\pgfmathsetmacro\anglecentopa{\anglecentopa-360};
\draw[green, thick] (cenblowtopc)+(\anglecentopa:\cenblowr pt) 
		arc (\anglecentopa:\anglecentopb:\cenblowr pt);
\path[name intersections = {of = secblowbot and cenblowbot, by = {secbota, secbotb}}];
\pgfmathanglebetweenpoints{\pgfpointanchor{cenblowbotc}{center}}{\pgfpointanchor{secbota}{center}}
\edef\anglecenbota{\pgfmathresult}
\pgfmathanglebetweenpoints{\pgfpointanchor{cenblowbotc}{center}}{\pgfpointanchor{secbotb}{center}}
\edef\anglecenbotb{\pgfmathresult}
\pgfmathsetmacro\anglecenbota{\anglecenbota-360};
\draw[green, thick] (cenblowbotc)+(\anglecenbota:\cenblowr pt) 
		arc (\anglecenbota:\anglecenbotb:\cenblowr pt);

	\pgfmathsetmacro\ycoord{(\R+\h)/2*(1-9/(\n +1))}
	\pgfmathsetmacro\radius{\R-\ycoord}
	\coordinate (horoc) at (0 pt,\ycoord pt);
	\path [name path=horo] (0 pt,\ycoord pt) circle [radius=\radius pt];
	\path[name intersections = {of = secblowtop and horo, by = {horotopa, horotopb}}];
	\pgfmathanglebetweenpoints{\pgfpointanchor{horoc}{center}}{\pgfpointanchor{horotopa}{center}}
	\edef\anglehorotopa{\pgfmathresult}
	\pgfmathanglebetweenpoints{\pgfpointanchor{horoc}{center}}{\pgfpointanchor{horotopb}{center}}
	\edef\anglehorotopb{\pgfmathresult}
	\pgfmathsetmacro\anglehorotopa{\anglehorotopa-360};

	\pgfmathsetmacro\ycoord{(\R+\h)/2*(1-9/(\n +1))}
	\pgfmathsetmacro\horor{\R-\ycoord}
	\coordinate (horoc) at (0 pt,\ycoord pt);
	\path [name path=horo] (0 pt,\ycoord pt) circle [radius=\horor pt];
	\path[name intersections = {of = secblowtop and horo, by = {horotopa, horotopb}}];
	\pgfmathanglebetweenpoints{\pgfpointanchor{horoc}{center}}{\pgfpointanchor{horotopa}{center}}
	\edef\anglehorotopa{\pgfmathresult}
	\pgfmathanglebetweenpoints{\pgfpointanchor{horoc}{center}}{\pgfpointanchor{horotopb}{center}}
	\edef\anglehorotopb{\pgfmathresult}
	\pgfmathsetmacro\anglehorotopa{\anglehorotopa-360};
	\path[name path=horo] (horoc)+(\anglehorotopa:\horor pt) 
		arc (\anglehorotopa:\anglehorotopb:\horor pt); 
	\pgfmathsetmacro\bigrNear{1.15*\bigr};
	\path [name path=connectingbot] (secblowbotc) circle [radius = \bigrNear pt];
	\pgfmathsetmacro\cenblowrNear{1.4*\cenblowr};
	\path [name path=cenblowbotNear] (cenblowbotc) circle [radius=\cenblowrNear pt]; 
\path[name intersections = {of = horo and connectingbot, by = {NearA, NearB}}];
\pgfmathanglebetweenpoints{\pgfpointanchor{horoc}{center}}{\pgfpointanchor{NearA}{center}}
\edef\angletopNearA{\pgfmathresult}
\pgfmathanglebetweenpoints{\pgfpointanchor{horoc}{center}}{\pgfpointanchor{NearB}{center}}
\edef\angletopNearB{\pgfmathresult}
\pgfmathsetmacro\angletopNearA{\angletopNearA-360};
\pgfmathsetmacro\angletopNearB{\angletopNearB-360};
\path[name intersections = {of = cenblowbotNear and connectingbot, by = {NearC, NearD}}];
\pgfmathanglebetweenpoints{\pgfpointanchor{cenblowbotc}{center}}{\pgfpointanchor{NearC}{center}}
\edef\angletopNearC{\pgfmathresult}
\pgfmathanglebetweenpoints{\pgfpointanchor{cenblowbotc}{center}}{\pgfpointanchor{NearD}{center}}
\edef\angletopNearD{\pgfmathresult}
\pgfmathsetmacro\angletopNearC{\angletopNearC-360};
\pgfmathsetmacro\angletopNearD{\angletopNearD-360};
\pgfmathanglebetweenpoints{\pgfpointanchor{secblowbotc}{center}}{\pgfpointanchor{NearA}{center}}
\edef\angleconnA{\pgfmathresult}
\pgfmathanglebetweenpoints{\pgfpointanchor{secblowbotc}{center}}{\pgfpointanchor{NearB}{center}}
\edef\angleconnB{\pgfmathresult}
\pgfmathanglebetweenpoints{\pgfpointanchor{secblowbotc}{center}}{\pgfpointanchor{NearC}{center}}
\edef\angleconnC{\pgfmathresult}
\pgfmathanglebetweenpoints{\pgfpointanchor{secblowbotc}{center}}{\pgfpointanchor{NearD}{center}}
\edef\angleconnD{\pgfmathresult}

\def\nNear{10}
\foreach \x in {0,1,2,...,\nNear} { 
\pgfmathsetmacro\newangle{\anglehorotopa+(\angletopNearB-\anglehorotopa)*\x/(\nNear+1)};
\path (horoc)+(\newangle:\horor pt)
	coordinate (First\x);}

\def\nNear{5}
\foreach \x in {1,2,...,\nNear} { 
\pgfmathsetmacro\newangle{\angleconnB+(\angleconnD-\angleconnB)*\x/(\nNear+1)};
\path (secblowbotc)+(\newangle:\bigrNear pt)
	coordinate (Second\x);
}

\def\nNear{10}
\foreach \x in {1,2,...,\nNear} { 
\pgfmathsetmacro\newangle{\angletopNearD+(\angletopNearC-\angletopNearD)*\x/(\nNear+1)};
\path (cenblowbotc)+(\newangle:\cenblowrNear pt)
	coordinate (Third\x);
}

\def\nNear{5}
\foreach \x in {1,2,...,\nNear} { 
\pgfmathsetmacro\newangle{\angleconnC+(\angleconnA-\angleconnC)*\x/(\nNear+1)};
\path (secblowbotc)+(\newangle:\bigrNear pt)
	coordinate (Fourth\x);}

\def\nNear{10}
\pgfmathsetmacro\nNearp{\nNear+1}
\foreach \x in {1,2,...,\nNearp} { 
\pgfmathsetmacro\newangle{\angletopNearA+(\anglehorotopb-\angletopNearA)*\x/(\nNear+1)};
\path (horoc)+(\newangle:\horor pt)
	coordinate (Fifth\x);}

\def\pts{
(First1) (First2) (First3) (First4) (First5) (First6) (First7) (First8) (First9) (First10) 
(Second1) (Second2) (Second3) (Second4) (Second5)
(Third1) (Third2) (Third3) (Third4) (Third5) (Third6) (Third7) (Third8) (Third9) (Third10)
(Fourth1) (Fourth2) (Fourth3) (Fourth4) (Fourth5)  
(Fifth1) (Fifth2) (Fifth3) (Fifth4) (Fifth5) (Fifth6) (Fifth7) (Fifth8) (Fifth9) (Fifth10) 
};
\def\mypath{ (First0) to [curve through={\pts}] (Fifth11)};
\draw [red] \mypath;
\draw [orange, transform canvas={rotate=180}] \mypath;

\foreach \x in {3,4,6} {
	\pgfmathsetmacro\ycoord{(\R+\h)/2*(0.99-\x/(\n +1))}
	\pgfmathsetmacro\radius{\R-\ycoord}
	\coordinate (horoc) at (0 pt,\ycoord pt);
	\path [name path=horo] (0 pt,\ycoord pt) circle [radius=\radius pt];
	\path[name intersections = {of = secblowtop and horo, by = {horotopa, horotopb}}];
	\pgfmathanglebetweenpoints{\pgfpointanchor{horoc}{center}}{\pgfpointanchor{horotopa}{center}}
	\edef\anglehorotopa{\pgfmathresult}
	\pgfmathanglebetweenpoints{\pgfpointanchor{horoc}{center}}{\pgfpointanchor{horotopb}{center}}
	\edef\anglehorotopb{\pgfmathresult}
	\pgfmathsetmacro\anglehorotopa{\anglehorotopa-360};
	\draw[red] (horoc)+(\anglehorotopa:\radius pt) 
		arc (\anglehorotopa:\anglehorotopb:\radius pt); }

\foreach \x in {3,4,6} {
	\pgfmathsetmacro\ycoord{-(\R+\h)/2*(1.03-\x/(\n +1))}
	\pgfmathsetmacro\radius{\R+\ycoord}
	\coordinate (horoc) at (0 pt,\ycoord pt);
	\path [name path=horo] (0 pt,\ycoord pt) circle [radius=\radius pt];
	\path[name intersections = {of = secblowbot and horo, by = {horobota, horobotb}}];
	\pgfmathanglebetweenpoints{\pgfpointanchor{horoc}{center}}{\pgfpointanchor{horobota}{center}}
	\edef\anglehorobota{\pgfmathresult}
	\pgfmathanglebetweenpoints{\pgfpointanchor{horoc}{center}}{\pgfpointanchor{horobotb}{center}}
	\edef\anglehorobotb{\pgfmathresult}
	\pgfmathsetmacro\anglehorobota{\anglehorobota-360};
	\draw[orange] (horoc)+(\anglehorobota:\radius pt) 
		arc (\anglehorobota:\anglehorobotb:\radius pt); }

\path[name intersections = {of = diag and cenblowtop, by = {topd', topd}}];
\path[name intersections = {of = diag and cenblowbot, by = {botd, botd'}}];
\draw (botd) -- (topd);

\end{tikzpicture}
\caption{$ G[1;N_\pm]$}\label{Figpm}
\end{figure}
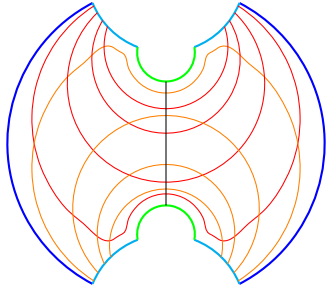

If $N_-$ is the opposite, the transpose, of $N_+=N$ then 
\begin{equation}\label{9.11.I}
	G/N_- \cong KA
\end{equation}
so the two space are naturally identified. However under this
identification the corresponding Harish-Chandra spaces $\HC(G/N_{\pm})$ are
\emph{not} identified. Rather 
\begin{equation}
\begin{gathered}
\HC(G/N_{\pm})=(\ilog\rho )^{\infty}\rho _{0\pm}^\mu\rho_{1\pm}^{-\mu}\cA(KA)\\
\Longrightarrow \HC(G/N_+)=\delta _+\HC(G/N_-),\
\delta _+=\rho _{0+}^{-2\mu}\rho_{1+}^{2\mu}=\rho_{1-}^{-2\mu}\rho_{0-}^{2\mu}=\delta _-^{-1}.
\end{gathered}
\label{CoSL2.244}\end{equation}

The `limiting element' for $N_+,$ $q(e_2),$ is replaced for $N_+$ by
\begin{equation*}
	q(e_1) =
	\begin{pmatrix}
	1 & 0 \\ 0 & 0
	\end{pmatrix}
\end{equation*}
and since this is antipodal we may simultaneously perform the
resolutions for both $N_+$ and for $N_-$ obtaining
\begin{multline}
G[1;N_\pm]=K\times \At[1;N_+,N_-],\\
\At[1;N_+,N_-]=[[\At[1];\{q(e_2)\},\{q(e_1)\}]; \pa_{2,+},\pa_{2,-}]
\end{multline}
since the centres of blow-up are disjoint -- see Figure~\ref{Figpm}.

This space can be used to analyze the well-known intertwining operators
$\cJ_{\pm}$ which using \eqref{9.11.I} can be seen as integral transforms
\begin{equation}\label{9.11.J}
	\cJ_{\pm}:\CIc(KA) \lra \CIc(KA),\
\cJ_{\pm}(u) = (\pi_{N_\pm})_*((\pi_{N_\mp}^*u)\cdot dn_{\pm}).
\end{equation}

\begin{proposition} The fibrations $\pi_{\pm}:G[1;N_{\pm}] \lra
  G/N_{\pm}[1]$ lift to b-fibrations
\begin{equation*}
 \xymatrix{
&G[1;N_\pm]\ar[dr]\ar[dl]\\
G/N_-[1]&&G/N_+[1]
}
\end{equation*}
and the intertwining operators $\cJ_{\pm}$ in \eqref{9.11.J} extend to
continuous linear operators
\begin{equation}
\cJ_{\pm}:\HC(G/N_\mp) \lra \delta^\ha_{\pm}\CI(G/N_{\pm})+\HC(G/N_{\pm})
\label{CoSL2.251}\end{equation}
where the non-trivial leading term is given explicitly as an integral
%
\begin{equation}
\begin{gathered}
K_{\pm}:\HC(G/N_{\mp})\longrightarrow \CI(K),\ K_{\pm}f=\delta^{-\ha}_{0\pm}\cJ_{\pm}f\big|_{\rho_{0\pm}=0},\\
K_{\pm}(f)(\theta)=\int _{A}\delta _{\mp}^{\ha}\left(f(a,\theta+\pi/2)-f(a,\theta-\pi/2)\right)da.
\end{gathered}
\label{CoSL2.247}\end{equation}
\end{proposition}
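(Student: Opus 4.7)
The plan is to reduce the analysis to the push-forward/pull-back machinery developed in Proposition~\ref{G1N} and Lemma~\ref{CoSL2.175}, applied on the symmetric double blow-up $G[1;N_\pm]$. Since the two centres $q(e_1)$ and $q(e_2)$ are antipodal in $\pa\At[1]$, the successive blow-ups used to resolve each $N_\pm$-action are supported in disjoint neighbourhoods and do not interfere: in a neighbourhood of $q(e_2)$ the space $G[1;N_\pm]$ coincides with $G[1;N_+]$, in a neighbourhood of $q(e_1)$ it coincides with $G[1;N_-]$, and away from both centres the blow-down to $G[1]$ is a diffeomorphism. Applying Proposition~\ref{G1N} on each side separately then yields the b-fibration property of both $\pi_\pm:G[1;N_\pm]\to G/N_\pm[1]$.

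Given the b-fibration property, the intertwining operator is interpreted as $(\pi_\pm)_*(\pi_\mp^* u\cdot dn_\pm)$ on the common space $G[1;N_\pm]$. Let $\tilde\rho_0,\tilde\rho_{1+},\tilde\rho_{2+},\tilde\rho_{1-},\tilde\rho_{2-}$ be defining functions for the five boundary hypersurfaces. The identities \eqref{CoSL2.272}, applied separately to the $N_+$- and $N_-$-resolutions, give
\begin{equation*}
\pi_\mp^*\rho_{0\mp}=\tilde\rho_0, \qquad \pi_\mp^*\rho_{1\mp}=\tilde\rho_{1\mp}\tilde\rho_{2\mp}^2,
\end{equation*}
while Proposition~\ref{G1N} gives $dn_\pm=\tilde\rho_{1\pm}^{-2\kappa}\tilde\rho_{2\pm}^{-2\kappa}\,d\bar n_\bo$ with $d\bar n_\bo$ a b-measure transversal to $\{\tilde\rho_{2\pm}=0\}$. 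Substituting into the weight $\rho_{0\mp}^\kappa\rho_{1\mp}^{-\kappa}$ of $\HC(G/N_\mp)$ and combining, the integrand $\pi_\mp^*u\cdot dn_\pm$ becomes a log-rapidly-decaying conormal b-density on $G[1;N_\pm]$ with explicit power weights at each hypersurface, and with smoothness up to the fixed hypersurfaces of $\pi_\mp$ supplied by Lemma~\ref{CoSL2.175}.

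The decisive integrability check is across $\{\tilde\rho_{2\pm}=0\}$, which contains the fibre direction of $\pi_\pm$. There the $-2\kappa$ power contributed by $dn_\pm$ is compensated by smoothness of $\pi_\mp^* u$ up to that hypersurface, so the push-forward part of Lemma~\ref{CoSL2.175} applies and yields an image in $(\ilog\rho)^\infty\rho_{0\pm}^{-\kappa}\rho_{1\pm}^{\kappa}\cA(G/N_\pm[1])$. Since $\delta_\pm^{\ha}=\rho_{0\pm}^{-\kappa}\rho_{1\pm}^{\kappa}$, separating the leading log-smooth term from the conormal remainder produces exactly the decomposition in \eqref{CoSL2.251}.

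The remaining, and most delicate, step is the explicit formula \eqref{CoSL2.247} for the boundary coefficient $K_\pm f$; this is the main obstacle. The approach is to use Lemma~\ref{CoSL2.269}: the closure of each right-$N_\pm$ orbit in $G[1]$ meets $\pa G[1]$ in two points $\pm k q_1(N_\pm)$, so the fibre of $\pi_\pm$ over a point of $\{\rho_{0\pm}=0\}\subset G/N_\pm[1]$ — which lives in $\{\tilde\rho_{2\pm}=0\}$ — decomposes into two disjoint components, each parametrized by $A$, lying over two copies of $K$ shifted by $\pm\pi/2$ relative to the base fibre. The $\pi/2$ shift is exactly the limit of $k(x;\tau)$ as $x\to\mp\infty$ read off from \eqref{CoSL2.221}. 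Restricting the integrand to each component after extracting the prefactor $\delta_\pm^{\ha}$, using that $d\bar n_\bo$ restricts to $da$ on each component, and using the Harish-Chandra asymptotics of $u$ to read off the factor $\delta_\mp^{\ha}$ from the weight of $\HC(G/N_\mp)$, collapses the push-forward to the signed sum in \eqref{CoSL2.247}. The opposite signs of the two terms come from the two limiting orientations identified in Lemma~\ref{CoSL2.269}.
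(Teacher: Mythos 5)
Your overall strategy --- work on the doubly resolved space $G[1;N_\pm]$, pull back, multiply by $dn_\pm$, push forward, and read off the leading term from the boundary geometry --- is the same as the paper's. But the central weight computation is wrong, and the error propagates into both the integrability check and the identification of the leading term. On $G[1;N_\pm]$ it is \emph{not} true that $\pi_\mp^*\rho_{0\mp}=\tilde\rho_0$. The face $\{\rho_{0\mp}=0\}\subset G/N_\mp[1]$ is the endpoint of $\pDg[1]$ at $q_1(N_\pm)$, which is exactly the centre of the \emph{other} resolution; its preimage under $\pi_\mp$ therefore consists of the old boundary \emph{together with} the two front faces of the $N_\pm$-resolution, and the same computation as in \eqref{CoSL2.272} gives $\pi_\mp^*\rho_{0\mp}=\tilde\rho_0\,\tilde\rho_{1\pm}\tilde\rho_{2\pm}^2$. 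This is the content of the paper's \eqref{CoSL2.246}, and it is what supplies the positive weight $\tilde\rho_{1\pm}^{\kappa}\tilde\rho_{2\pm}^{2\kappa}$ (with log-rapid decay) at precisely the faces where $dn_\pm\sim\tilde\rho_{1\pm}^{-2\kappa}\tilde\rho_{2\pm}^{-2\kappa}\,d\bar n_{\bo}$ is singular. Your substitute claim --- that the $-2\kappa$ from $dn_\pm$ at $\{\tilde\rho_{2\pm}=0\}$ is ``compensated by smoothness of $\pi_\mp^*u$ up to that hypersurface'' --- fails: boundedness does not make $\rho^{-2\kappa}$ integrable against the b-measure $d\rho/\rho$; the fibre integral converges because of the power $\tilde\rho_{2\pm}^{2\kappa}$ (plus log decay) coming from the lift of $\rho_{0\mp}^{\kappa}$ that your pull-back formula omits.

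The same omission undermines the leading-term analysis. The preimage of $\{\rho_{0\pm}=0\}$ under $\pi_\pm$ is $\{\tilde\rho_0=0\}\cup\{\tilde\rho_{1\mp}=0\}\cup\{\tilde\rho_{2\mp}=0\}$, not $\{\tilde\rho_{2\pm}=0\}$ as you assert; the face $\{\tilde\rho_{2\pm}=0\}$, where the fibre endpoints lie, maps to $\{\rho_{1\pm}=0\}$. The term $\delta_\pm^{\ha}\CI(G/N_\pm)$ in \eqref{CoSL2.251} arises because, among the three faces mapped to $\{\rho_{0\pm}=0\}$, the second front face of the \emph{opposite} resolution carries a smooth integer-order factor and is mapped with multiplicity $2$, so it contributes the minimal normalized order \emph{without} log-rapid decay, while the other two faces contribute the same order with log decay and land in $\HC(G/N_\pm)$. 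Your conclusion that the image lies in the single space $(\ilog\rho)^\infty\delta_\pm^{\ha}\cA(G/N_\pm[1])$ cannot be right: that space neither contains $\HC(G/N_\pm)$ (the powers at $\rho_{1\pm}$ differ by $\rho_{1\pm}^{2\kappa}$) nor exhibits the genuinely non-decaying $\delta_\pm^{\ha}\CI$ term, and a general element of a weighted conormal space does not ``separate'' into a smooth leading term plus a better remainder --- that separation is produced by the structure of the push-forward, not by the target space. Your final paragraph on \eqref{CoSL2.247} is a reasonable heuristic (the paper itself does not carry this computation out in detail), but as written it rests on the misidentified fibre over $\{\rho_{0\pm}=0\}$.
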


\begin{remark}\label{CoSL2.248} The computations of Crisp and Higson
  \cite{MR3666051} show that $\cJ_{\mp}$ have continuous right inverses $I_{\pm}$
\begin{equation}
\xymatrix{\HC(G/N_\pm)\ar@{->>}[r]^-{I_\pm}&\Nul(K_{\mp})},\ \cJ_{\mp}I_{\pm}=\Id.
\label{CoSL2.249}\end{equation}

\end{remark}

\begin{proof} The first step is to analyse the pull-back of $\HC(G/N_+)$ to $G[1,N_\pm].$
This factors through the pull-back to $G[1;N_+]$ where the projection to
$G/N_+[1]$ is a fibration, so  
\begin{equation}
\pi_+^*\HC(G/N_+)\subset \iota_2(\ilog\rho _{0+})^{\infty}(\ilog\rho
_{1+})^{\infty}\rho_{0+}^\mu\rho_{1+}^{-\mu}\cA(G[1;N_+]). 
\label{CoSL2.245}\end{equation}
The extra blow-ups in the passage from $G[1,N_+]$ to $G[1,N_\pm]$ occur at
$q(e_1),$ in the interior of the face defined by $\rho_{0+},$ and at the
boundary of the resulting front face. It follows directly that  
\begin{multline}
\pi_+^*\HC(G/N_+)\subset \\
\iota_2(\ilog\rho _{0})^{\infty}(\ilog\rho
_{1+})^{\infty}(\ilog\rho
_{2-})^{\infty}(\ilog\rho
_{1-})^{\infty}\rho_{0}^\mu\rho_{2-}^{2\mu}\rho_{1-}^\mu\rho_{1+}^{-\mu}\cA(G[1;N_\pm]).
\label{CoSL2.246}\end{multline}
where now $\rho _0$ defines the `old boundary' outside the two
blow-ups. As usual, this is not an equality.

Push-forward is relative to the generating vector field $V_-$ for the
action of $N_-.$ From Proposition~\ref{G1N} this is smooth on $G[1,N_-]$
and of the form $\rho _{1-}\rho _{2-}^2W_-$ with $W_-\rho _2\not=0$ at the
boundary but $W_-$ tangent to the other boundaries and in particular non-zero
at $q(e_2).$ Lifted to $G[1;N_+,N_-]$ this becomes singular and of the form
\begin{equation}
V_-=\rho_{1+}^{-1}\rho_{2+}^{-1}\rho _{1-}\rho _{2-}\tilde{W}_-,\ \tilde{W}\in\Vb(G[1;N_\pm])
\label{9.11.2018.4}\end{equation}
where now $\tilde{W_-}$ is a non-vanishing smooth b-vector field which
spans the null space of the b-differential of the stretched projection to
$G/N_-[1].$ Overall then $(J_+f)\nu_{\bo},$ for $f\in\HC(G/N_+)$ is the
image of some some b-density
\begin{multline*}
\iota_{2+}(\ilog\rho_{1+})^\infty(\ilog\rho_0)^\infty(\ilog\rho_{1-})^\infty(\ilog\rho_{2-})^\infty
\rho _{1+}^{\ha}\rho_{2+}\rho_0^\ha\rho_{1-}^{-\ha}g\nu_{\bo},\\
g\in\cA(G[1;N_+,N_-]).
\label{9.11.2018.5}\end{multline*}
under pushforward with respect to $\pi_-.$ This is indeed
integrable across the fixed boundary $\{\rho _{2-}=0\}$ (because of the rapid
log-decay) and by the push-forward theorem therefore lies in 
\begin{equation}
(\ilog\rho
_{0-}^\infty(\ilog\rho _{1-})^\infty\rho
_{0-}^\mu\rho_{1-}^{-\mu}\cA(G/N_-[1])\nu_{\bo},
\label{CoSL2.250}\end{equation}
so \eqref{CoSL2.251} holds.

Continuity also follows from this argument. 
\end{proof}

\appendixbody
\section*{Appendix: Conormal functions}

Since the spaces of log-rapid decay conormal functions are not well-known
we recall here, without proofs, some of the properties of conormal
functions to put these in context.

We start by recalling the case of a compact manifold with boundary, $X.$ If
$\cV(X)$ is the Lie algebra of all smooth vector fields -- meaning smooth
up to the boundary -- then using the action on extendible distributions
(so just in the interior) smooth functions are characterized by 
\begin{equation}
\CI(X)=\{u\in L^\infty(X);\Diff*(X)u\subset L^{\infty}(X)\}.
\label{CoSL2.140}\end{equation}
Here $\Diff*(X)$ is the enveloping algebra of $\cV(X),$ the space of linear
differential operators with coefficients smooth on $X.$ The spaces of order
at most $k$ are finitely spanned over $\CI(X)$ and the Fr\'echet topology
on $\CI(X)$ is given by the corresponding $L^\infty$ norms in
\eqref{CoSL2.140}.

The conormal functions (with respect to $L^\infty,$ these could also very
properly be called `symbols') are defined by direct analogy with
\eqref{CoSL2.140} but replacing $\cV(X)$ by its (more intrinsic)
sub-algebra 
\begin{equation}
\bV(X)=\{V\in\cV(X);V\text{ is tangent to }\pa X\}.
\label{CoSL2.141}\end{equation}
The tangency condition can be restated in terms of a smooth boundary
defining function $\rho \in\CI(X),$ $\{\rho >0\}=X\setminus\pa X,$ $d\rho
\not=0$ on $\pa X.$ Namely if $V \in \cV(X)$ then $V\in\bV(X)$ if and only
if $(V\rho)/\rho \in\CI(X).$ Then $\Diffb*(X)\subset\Diff*(X)$ is the corresponding
enveloping algebra and we define the space of conormal functions by
\begin{equation}
\cA(X)=\{u\in L^\infty(X);\Diffb*(X)u\subset L^{\infty}(X)\}.
\label{CoSL2.142}\end{equation}
This is a Fr\'echet space with the seminorms defined in the same manner and 
\begin{equation}
\CI(X)\subset\cA(X)
\label{CoSL2.143}\end{equation}
with the inclusion continuous.

We can recover this smooth subspace by considering a `radial vector
field'. This is an element $R\in\bV(X),$ usually taken to be real, with the
normalizing condition that 
\begin{equation}
R\rho =\rho +a\rho ^2,\ a\in\CI(X).
\label{CoSL2.144}\end{equation}
In local coordinates in which $\rho =x$ then $R=x\pa_x+xT$ where
$T\in\bV(X)$ locally, and local radial vector fields can be patched to give
a global radial vector field. Having chosen the radial vector field
consider the `test operators'
\begin{equation}
T(R,k)=R(R-1)\dots (R-k)\in\Diffb k(X).
\label{CoSL2.145}\end{equation}
The smooth subspace is characterized by the `Taylor series' conditions 
\begin{equation}
u\in\cA(X),\ T(R,k)u\in\rho ^kL^\infty(X)\ \forall\ k\Longrightarrow u\in\CI(X).
\label{CoSL2.146}\end{equation}

As well as the `bounded conormal functions' defined by \eqref{CoSL2.142} we
need \emph{weighted} versions of such spaces. By a \emph{weight} $0<\alpha
\in\CI(X\setminus\pa X)$ (defined only on the interior of $X)$ we mean 
functions with the iterative property  
\begin{equation}
P\alpha \in \alpha L^\infty(X)\Longleftrightarrow (P\alpha )/\alpha \in
L^\infty(X)\ \forall\ P\in\Diffb*(X).
\label{CoSL2.147}\end{equation}
The most obvious example is a defining function $\rho \in\CI(X).$ Two
weights are equivalent if they are bounded relative to each other 
\begin{equation}
\frac1c\alpha _1\le\alpha _2\le c\alpha _2,\ c>0
\label{CoSL2.163}\end{equation}
and only the behaviour near the boundary is significant.  The weighted
spaces discussed below only depend on the equivalence class of the weight
and any weight is equivalent to one which is a function of a radial
variable, reducing to the one-dimensional case. The only examples which
arise here are powers $x^t$ and $-\log x.$

The product of two weights is also a weight. If $\alpha$ is a weight then
for any $t\in\bbR,$ $\alpha ^t$ is a weight. Significantly in the present
setting if $\inf \alpha>1$ then $\log\alpha$ is also a weight.

For any weight the corresponding weighted conormal space is defined by 
\begin{equation}
\alpha \cA(X)=\{u:X\setminus\pa X\longrightarrow \bbC; (Pu)/\alpha \in
L^{\infty}(X)\ \forall\ P\in\Diffb*(X)\}.
\label{CoSL2.148}\end{equation}
In particular, $\alpha \in\alpha \cA(X)$ and as the notation implicitly indicates
\begin{equation}
u\in \alpha \cA(X)\Longleftrightarrow u/\alpha \in\cA(X)
\label{CoSL2.149}\end{equation}
as a consequence of the estimates \eqref{CoSL2.147}. That is,
multiplication by $\alpha$ is an isomorphism of $\cA(X)$ onto $\alpha
\cA(X).$ For two weights
\begin{equation}
\alpha \le C\beta \Longrightarrow \alpha \cA(X)\subset\beta \cA(X).
\label{CoSL2.150}\end{equation}

If $\alpha$ is a bounded weight and $\beta$ is a weight is convenient to consider $\alpha
^\infty\beta $ as a formal weight in the sense that 
\begin{equation}
\alpha ^\infty\beta \cA(X)=\bigcap_k\alpha ^k\beta \cA(X).
\label{CoSL2.151}\end{equation}
These are again Fr\'echet spaces and if $\alpha$ \emph{vanishes at the boundary}
\begin{equation}
\lim_{\epsilon \downarrow0}\sup_{\rho <\epsilon }\alpha =0\Mthen
\CIc(X\setminus\pa X)\text{ is dense in }\alpha ^\infty\beta \cA(X).
\label{CoSL2.152}\end{equation}

The Harish-Chandra space in the case of $\SL(2,\bbK)$ is $(\ilog\rho
)^\infty\rho ^\kappa \cA(G[1])$ where for $\rho <1,$
\begin{equation*}
	\ilog\rho = \frac1{\log\frac1\rho}
\end{equation*}
is a boundary defining
function so in particular this density statement applies.

For $\SL(n,\bbK)$ and even for $\SL(2,\bbK)$ when we consider $G[2]$ and
related compactifications, we need to consider conormal functions on
compact manifolds with corners. Recall that such a manifold, still denoted
$X,$ is locally modelled on $[0,\infty)^n$ instead of $\bbR^n$ and we
  impose the additional requirement that boundary hypersurfaces -- the
  closures of the components of the subsets of points at which the local
  model is $[0,\infty)\times\bbR^{n-1}$ -- are embedded. This is equivalent
    to requiring that each such boundary hypersurface $H$ has a boundary
    defining function $\rho _H\in\CI(X)$ in the sense completely analogous
    to the boundary case
\begin{equation}
\{\rho _H>0\}=X\setminus H,\ d\rho _H\not=0\Mat H.
\label{CoSL2.153}\end{equation}

It follows that each of the boundary hypersurfaces has a neighbourhood in
 $X$ diffeomorphic to $H\times[0,\epsilon )_\rho.$ This allows all the
  statements above to be generalized rather directly. Namely $\bV(X)$ is
  the Lie algebra of smooth vector fields tangent to all boundary
  hypersurfaces (and hence to all boundary faces). The definition of the
  bounded-conormal space and weights is then formally the same as
  \eqref{CoSL2.142}, \eqref{CoSL2.147} and \eqref{CoSL2.148}.
There are intermediate Lie algebras between $\bV(X)$ and $\cV(X),$ in
particular if $H$ is a hypersurface then 
\begin{equation}
\cV_H(X)=\{V\in\cV(X);V\rho _H\in\rho _H\CI(X)\}
\label{CoSL2.155}\end{equation}
consists of the vector fields which are tangent to $H.$ A weight at $H$ is
then defined by the condition 
\begin{equation}
0<\alpha \in\CI(X\setminus H),\ \Diff*_H\alpha \subset\alpha L^\infty(X)
\label{CoSL2.156}\end{equation}
which implies that $\alpha$ is a weight on $X$ but is also smooth, so
trivial as a weight, up to hypersurfaces other than $H.$ Then if $\cM_1(X)$
is the set of boundary 
hypersurfaces and $\alpha _H$ is a weight for each $H\in\cM_1(X)$ then
taking $\alpha _*$ to be the products of these weights there are
corresponding conormal spaces
\begin{equation}
\alpha _*\cA(X)=\{u\in L^{\infty}(X);u/\Pi_{H\in\cM_1(X)}\alpha _H\in \cA(X)\}.
\label{CoSL2.157}\end{equation}
The properties listed above carry over in a rather direct way and in
particular formal weighted spaces, corresponding to $\alpha _H^\infty\beta
_H$ at any combination of hypersurfaces, are defined if the weight $\alpha
_H$ vanish at $H$ in the sense corresponding to \eqref{CoSL2.152}.

We also use hybrid \ci-conormal spaces; that these make good sense is a
consequence of the local product decomposition near a boundary
hypersurface. We define another formal weight at each boundary
hypersurfaces, $\iota_H.$ If $\alpha _*$ is a collection of weights one of
which is $\iota_H$, then let $\hat\alpha_*(k)$ be the weights where $\iota_H$ is
replaced by $\rho ^k.$ This allows us to define
\begin{equation}
\alpha _*\cA(X)=\{u\in \hat\alpha(0)\cA(X);T(R_H,k)u\in\hat\alpha
(k)\cA(X)\ \forall\ k\}. 
\label{CoSL2.158}\end{equation}
In a local product decomposition this corresponds to smoothness in the
normal variable with values in the conormal space for $H$ where the formal
smooth `weight' is deleted.

One can take the formal smooth weight at any collection of hypersurfaces
and and in particular if one takes this weight at \emph{all} boundary
hypersurfaces then one recovers $\CI(X).$

The conormal spaces have interpolation properties corresponding to
multiplicative properties of the weights. For instance if $w_1(H)$ and
$w_2(H')$ are vanishing weights at different, but possibly
intersecting, hypersurfaces then
\begin{equation}
w_1\cA(X)\cap w_2\cA(X)\subset w_1^{\ha}w_2^{\ha}\cA(X).
\label{CoSL2.165}\end{equation}

From this point onwards we will only consider the special weights given by
the defining functions $\rho _H$ themselves, the formal smoothing weight
and weights related to $\ilog\rho _H.$ 

As remarked above these conormal spaces are analogues of $\CI(X)$ on a
compact manifold without boundary. In the case of compact manifolds with
corners many of the standard functorial results carry over to the smooth
spaces. In particular if $f:M\longrightarrow N$ is a smooth map between
compact manifolds with corners then 
\begin{equation}
f^*:\CI(N)\longrightarrow \CI(M).
\label{CoSL2.159}\end{equation}
For push-forward a stronger condition is needed, that $f$ be a
submersion, 
\begin{equation}
\begin{gathered}
f_*:T_pM\longrightarrow T_{f(p)}N\text{ surjective }\forall\ p\in M\Longrightarrow \\
f_*:\CI(M;\Omega)\longrightarrow \CI(N;\Omega )
\end{gathered}
\label{CoSL2.161}\end{equation}
where it is only natural to push forward densities. 

General smooth maps are not particularly natural in the context of
manifolds with corners -- in general there need be little relationship to
the boundary. So for instance under pull-back, \eqref{CoSL2.159}, vanishing
of $u\in\CI(N)$ at a boundary hypersurface does not have direct
implications for the vanishing of $f^*u$ at boundary hypersurfaces.

It is more natural to work in the category of b-maps -- and these are
indeed the maps that are typically encountered. Here we only consider
\emph{interior} b-maps (meaning the image meets the interior) but drop the
qualifier. A b-map is a smooth map $f:M\longrightarrow N$ with the
additional property that the defining functions pull back appropriately
\begin{equation}
f^*\rho _{H'}=a_{H'}\prod_{H\in\cM_1(M)}\rho _H^{\mu(H,H')},\ \forall\ H'\in\cM_1(N),\
0<a_H\in\CI(M).
\label{CoSL2.160}\end{equation}
The powers $\mu(H,H')$ are necessarily non-negative integers but can
all vanish. 

For such a b-map an analogue of \eqref{CoSL2.159} holds for the conormal
spaces. Namely if $w$ is a weight on $N$ then $f^{\#}w$ is the weight on
$M$ given by $f^*w$ with the addition of the formal smoothing weights at
all hypersurfaces $H\in\cM_1(M)$ for which $\mu(H,H')=0$ for all $H',$ then
\begin{equation}
f^*:\alpha \cA(N)\longrightarrow (f^{\#}\alpha)\cA(M).
\label{CoSL2.166}\end{equation}
For power weights this corresponds to composition in the indices
\begin{equation}
f^*:\rho ^{\kappa'}\cA(N)\longrightarrow
\rho^{\kappa}\cA(M),\ \kappa(H)=\sum\limits_{H'\in\cM_1(N)}\mu(H,H')\kappa'(H').
\label{CoSL2.162}\end{equation}

In general the index $\mu(H_i,H')$ can be non-zero for more than one $H_i\in\cM_1(M)$
and the same $H'\in\cM_1(N).$ If this does not happen, so for each
$H\in\cM_1(M)$ there is at most one $H'$ such that $\mu(H,H')\not=0,$ the
b-map is said to be b-normal -- this corresponds to the absence of boundary
hypersurfaces in $M$ which are mapped into corners of codimension two (or
higher) in $N.$

For the logarithmic weights the pull-back $f^*\ilog \rho _{H'}$ is not a
product of weights. However it is bounded between such products: 
\begin{multline}
\frac1c\prod_{H_i\in\cM_1(M);\mu(H_i,H')\not=0}(\ilog(\rho _{H_i}))^{1/p}\\
\le
f^*i\ilog\rho _H
\le c\prod_{H_i\in\cM_1(M);\mu(H_i,H')\not=0}\ilog(\rho _{H_i}).
\label{CoSL2.167}\end{multline}
Here $p$ is the number of hypersurfaces in the preimage of $H'$ but can be
improved to the maximal number of mutually intersecting hypersurfaces in
the preimage.

For push-forward it is necessary to make stronger assumptions on $f,$ but
weaker than the assumption of a fibration as is needed for
\eqref{CoSL2.161}. Namely it suffices to take $f$ to be a b-fibration. This
corresponds to the three conditions that $f$ be a b-map, that further it
satisfies the b-normal condition, and finally that the b-differential be
surjective. This latter condition can be stated infinitesimally or globally
as the condition that every element $V\in\bV(N)$ is $f$-related to an
element $W\in\bV(M),$  
\begin{equation*}
Wf^*u=f^*(Vu)\ \forall\ u\in\CI(N).
\label{CoSL2.168}\end{equation*}

For a b-fibration there is an analogue of \eqref{CoSL2.161} under an
integrability assumption on the domain. Consider the `fixed' hypersurfaces
on $M,$ those which are not mapped by $f$ into the boundary of $N.$ These
are precisely the hypersurfaces such that $\mu(H,*)=0.$ 
Then a suitable `integral' weight is  
\begin{equation}
I_f=\prod_{H\in\cM_1(M);\mu(H,*)=0}(\ilog \rho _H)^2
\label{CoSL2.170}\end{equation}
where any power greater than one suffices. We also define a weight on $N$
corresponding to the number, $p(H'),$ of boundary hypersurfaces in $M$
mapped into $H.$ As in \eqref{CoSL2.167} this can be refined to the maximal
number of mutually intersecting hypersurfaces in the preimage of $H'.$ Then
\begin{equation}
J_f=\prod_{H'\in\cM_1(N)}(\ilog \rho _{H'})^{1-p(H')}.
\label{CoSL2.171}\end{equation}
Then for any weight $\alpha'$ on $N,$
\begin{equation}
f_*:(f^*w)I_f\cA(M;\Omega _{\bo})\longrightarrow wJ_f\cA(N;\Omega _{\bo})).
\label{CoSL2.169}\end{equation}
Thus there is in general `logarithmic growth' of the push-forward. Note
that the case of a fibration corresponds to $p(H')=1$ and hence no such
factors appear. In \cite{MR93i:58148} the existence of expansions for
push-forward of an integrable function with expansions is discussed --
there may indeed be additional logarithmic terms.

\providecommand{\bysame}{\leavevmode\hbox to3em{\hrulefill}\thinspace}
\providecommand{\MR}{\relax\ifhmode\unskip\space\fi MR }
\providecommand{\MRhref}[2]{%
  \href{http://www.ams.org/mathscinet-getitem?mr=#1}{#2}
}
\providecommand{\href}[2]{#2}


\begin{thebibliography}{10}

\bibitem{MR3666051}
Tyrone Crisp and Nigel Higson, \emph{A second adjoint theorem for {${\rm
  SL}(2,\Bbb R)$}}, Around {L}anglands correspondences, Contemp. Math., vol.
  691, Amer. Math. Soc., Providence, RI, 2017, pp.~73--101. \MR{3666051}

\bibitem{MR718125}
Corrado De~Concini and Claudio Procesi, \emph{Complete symmetric varieties},
  Invariant theory ({M}ontecatini, 1982), Lecture Notes in Math., vol. 996,
  Springer, Berlin, 1983, pp.~1--44. \MR{718125}

\bibitem{1807.08299}
Panagiotis Dimakis and Richard Melrose, \emph{Equivariant {K}-theory and
  {R}esolution {I}: {A}belian actions}, 2018, ArXiv:1807.08299.

\bibitem{MR1880691}
Anthony~W. Knapp, \emph{Representation theory of semisimple groups}, Princeton
  Landmarks in Mathematics, Princeton University Press, Princeton, NJ, 2001, An
  overview based on examples, Reprint of the 1986 original. \MR{1880691}

\bibitem{MR1133743}
Rafe Mazzeo, \emph{Elliptic theory of differential edge operators. {I}}, Comm.
  Partial Differential Equations \textbf{16} (1991), no.~10, 1615--1664.
  \MR{1133743}

\bibitem{MR2175410}
Rafe Mazzeo and Andr\'{a}s Vasy, \emph{Analytic continuation of the resolvent
  of the {L}aplacian on symmetric spaces of noncompact type}, J. Funct. Anal.
  \textbf{228} (2005), no.~2, 311--368. \MR{2175410}

\bibitem{MR89c:58133}
Rafe~R. Mazzeo and Richard~B. Melrose, \emph{Meromorphic extension of the
  resolvent on complete spaces with asymptotically constant negative
  curvature}, J. Funct. Anal. \textbf{75} (1987), no.~2, 260--310.
  \MR{89c:58133}

\bibitem{MR93i:58148}
Richard~B. Melrose, \emph{Calculus of conormal distributions on manifolds with
  corners}, Internat. Math. Res. Notices (1992), no.~3, 51--61. \MR{93i:58148}

\bibitem{MR1348401}
\bysame, \emph{The {A}tiyah-{P}atodi-{S}inger index theorem}, Research Notes in
  Mathematics, vol.~4, A K Peters, Ltd., Wellesley, MA, 1993. \MR{1348401}

\bibitem{MetResMod}
Richard~B. Melrose and Xuwen Zhu, \emph{Metric resolution of the universal
  curve the moduli space of riemann surfaces}, ArXiV:1606.01158, 2016.

\bibitem{MR1725738}
Veeravalli~S. Varadarajan, \emph{An introduction to harmonic analysis on
  semisimple {L}ie groups}, Cambridge Studies in Advanced Mathematics, vol.~16,
  Cambridge University Press, Cambridge, 1999, Corrected reprint of the 1989
  original. \MR{1725738}

\bibitem{MR929683}
Nolan~R. Wallach, \emph{Real reductive groups. {I}}, Pure and Applied
  Mathematics, vol. 132, Academic Press, Inc., Boston, MA, 1988. \MR{929683}

\end{thebibliography}
\end{document}